\let\orgdescriptionlabel\descriptionlabel
\renewcommand*{\descriptionlabel}[1]{%
  \let\orglabel\label
  \let\label\@gobble
  \phantomsection
  \edef\@currentlabel{#1}%
  \let\label\orglabel
  \orgdescriptionlabel{#1}%
}
\title{Coloring graphs with forbidden \\ almost bipartite subgraphs}
\date{}
\author{\lsstyle James~Anderson}
\email{james.anderson@math.gatech.edu}
\author{\lsstyle Anton~Bernshteyn}
\email{bernshteyn@math.ucla.edu}
\author{\lsstyle Abhishek~Dhawan}
\email{adhawan2@illinois.edu}
\address{{\normalfont{(JA) School of Mathematics, Georgia Institute of Technology, Atlanta, GA, USA}}}
\address{\normalfont{(AB) Department of Mathematics, University of California, Los Angeles, CA, USA}}
\address{\normalfont{(AD) Department of Mathematics, University of Illinois Urbana--Champaign, IL, USA}}
\thanks{This research was partially supported by the NSF grant DMS-2045412 and the NSF CAREER grant DMS-2528522.
AD's research was also partially supported by the NSF RTG grant DMS-1937241.}
\newtheoremstyle{bfnote}%
{}{}%
{\slshape}{}%
{\bfseries}{\bfseries.}%
{ }%
{\thmname{#1}\thmnumber{ #2}\thmnote{ \ep{\normalfont{}#3}}}
\newtheoremstyle{claim}%
{}{}%
{\slshape}{}%
{\itshape}{.}%
{ }%
{\thmname{#1}\thmnumber{ #2}\thmnote{ \ep{\normalfont{}#3}}}
\theoremstyle{bfnote}
\newtheorem{theo}{Theorem}[section]
\newtheorem*{theo*}{Theorem}
\newtheorem*{Lemma*}{Lemma}
\newtheorem{prop}[theo]{Proposition}
\newtheorem{Lemma}[theo]{Lemma}
\newtheorem{corl}[theo]{Corollary}
\newtheorem{conj}[theo]{Conjecture}
\newtheorem*{corl*}{Corollary}
\theoremstyle{definition}
\newtheorem{defn}[theo]{Definition}
\newtheorem*{defn*}{Definition}
\newtheorem{ques}[theo]{Question}
\newtheorem{prob}[theo]{Problem}
\newtheorem*{exmp*}{Example}
\theoremstyle{remark}
\newtheorem*{ques*}{Question}
\newtheorem*{remk*}{Remark}
\theoremstyle{claim}
\newcounter{ForClaims}[section]
\newtheorem{claim}{Claim}[ForClaims]
\newtheorem*{claim*}{Claim}
\newcommand{\neutralize}[1]{\expandafter\let\csname c@#1\endcsname\count@}
\newenvironment{claimproof}{\noindent$\rhd$\hspace{1em}}{\hfill$\blacktriangleleft$\smallskip}
\newcommand{\0}{\emptyset}
\newcommand{\set}[1]{\{#1\}}
\newcommand{\N}{{\mathbb{N}}}
\renewcommand{\P}{\mathbb{P}}
\newcommand{\E}{\mathbb{E}}
\renewcommand{\epsilon}{\varepsilon}
\renewcommand{\phi}{\varphi}
\renewcommand{\theta}{\vartheta}
\renewcommand{\leq}{\leqslant}
\renewcommand{\geq}{\geqslant}
\newcommand{\defeq}{\coloneqq}
\newcommand{\im}{\mathsf{im}}
\newcommand{\bemph}[1]{{\normalfont#1}} 
\newcommand{\ep}[1]{\bemph{(}#1\bemph{)}} 
\newcommand{\pto}{\dashrightarrow}
\newcommand{\emphdef}[1]{\textbf{\textit{{#1}}}}
\newcommand{\keep}{\mathsf{keep}}
\newcommand{\uncolor}{\mathsf{uncolor}}
\newcommand{\blank}{\mathsf{blank}}
\newcommand{\dom}{\mathsf{dom}}
\newcommand{\eq}{\mathsf{eq}}
\newcommand{\Ber}{\mathsf{Bernoulli}}
\newcommand{\LLL}{\text{Lov\'asz Local Lemma}}
\numberwithin{equation}{section}
\newcommand{\emphd}[1]{\emphdef{#1}}
\newcommand{\Bad}{\mathsf{Bad}}
\newcommand{\Good}{\mathsf{Good}}
\newcommand{\keptedges}{E_{K}(v)}
\newcommand{\uncoloredges}{E_{U}(v)}
\newcommand{\normalizeddeg}{d_{K\cap U}(v)}
\newcommand{\nd}{\normalizeddeg}
\newcommand{\uncoloredgesc}{E_{U}(v,c)}
\titleformat{\section}[block]{\scshape}{\thesection.}{1ex}{}
\titleformat{\subsection}[block]{\bfseries}{\thesubsection.}{1ex}{}
\titleformat{\subsection}[block]{\bfseries}{\thesubsection.}{1ex}{}
\titleformat{\subsubsection}[runin]{\itshape}{\bfseries\upshape\thesubsubsection.}{1ex}{}[.---]
\titlespacing*{\section}{0pt}{*3}{*1}
\titlespacing*{\subsection}{0pt}{*3}{*1}
\titlespacing*{\subsubsection}{0pt}{*1.5}{*0}
\setlist{topsep=4pt,itemsep=4pt}
\begin{document}


\maketitle


\begin{abstract}
    Alon, Krivelevich, and Sudakov conjectured in 1999 that for every finite graph $F$, there exists a quantity $c(F)$ such that $\chi(G) \leq (c(F) + o(1)) \Delta / \log\Delta$ whenever $G$ is an $F$-free graph of maximum degree $\Delta$. The largest class of connected graphs $F$ for which this conjecture has been verified so far, by Alon, Krivelevich, and Sudakov themselves, comprises the almost bipartite graphs (i.e., subgraphs of the complete tripartite graph $K_{1,t,t}$ for some $t \in \N$). However, the optimal value for $c(F)$ remains unknown even for such graphs. Bollob\'as showed, using random regular graphs, that $c(F) \geq 1/2$ when $F$ contains a cycle. On the other hand, Davies, Kang, Pirot, and Sereni recently established an upper bound of $c(K_{1,t,t}) \leq t$. We improve this to a uniform constant, showing $c(F) \leq 4$ for every almost bipartite graph $F$. This surprisingly makes the bound independent of $F$ in all the known cases of the conjecture. We also establish a more general version of our bound in the setting of DP-coloring (also known as correspondence coloring) and consider some algorithmic consequences of our results.
\end{abstract}

\section{Introduction}

    \subsection{Bounds for the Alon--Krivelevich--Sudakov conjecture}

    All graphs in this paper are finite, undirected, and simple. Unless explicitly indicated otherwise, all logarithms are base $e$. We say that a graph $G$ is \emphdef{$F$-free} if $G$ has no subgraph \ep{not necessarily induced} isomorphic to $F$. We are interested in the following question:
    
    \begin{ques}\label{ques:AKS}
        Given a graph $F$ and a natural number $\Delta$, how large can the chromatic number $\chi(G)$ of an $F$-free graph $G$ of maximum degree $\Delta$ be?
    \end{ques}
    
    The best known general upper bound for Question~\ref{ques:AKS}, due to Johansson \cite{Joh_sparse}, is
    \begin{equation}\label{eq:loglog}
        \chi(G) \,=\, O \left(\frac{\Delta \log\log \Delta}{\log \Delta}\right).
    \end{equation}
    A simple proof of this result was recently discovered by Molloy \cite{Molloy} \ep{see also \cite[\S4]{JMTheorem} and \cite[\S7.2]{DKPS}}. 
    Alon, Krivelevich, and Sudakov famously conjectured that 
    the $\log \log \Delta$ factor in \eqref{eq:loglog} can be removed:
    
    \begin{conj}[{Alon--Krivelevich--Sudakov \cite[Conjecture 3.1]{AKSConjecture}}]\label{conj:AKS}
        Fix an arbitrary graph $F$. If $G$ is an $F$-free graph of maximum degree $\Delta$, then $\chi(G) = O(\Delta/\log\Delta)$ \ep{where the $O(\cdot)$ notation hides factors that may depend on $F$}.
    \end{conj}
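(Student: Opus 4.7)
The plan is to prove Conjecture~\ref{conj:AKS} by an iterative semi-random partial coloring argument that extends Molloy's entropy-compression proof of the triangle-free bound to arbitrary forbidden subgraphs $F$. Fix a palette of size $k = C(F)\Delta/\log\Delta$ for a constant $C(F)$ to be chosen, and build a proper $k$-coloring of $G$ through $O(\log \Delta)$ ``nibble'' rounds followed by a Lov\'asz Local Lemma endgame. The main per-round invariant is that at every uncolored vertex $v$, the size of the remaining palette $L(v)$ and the residual degree $d(v)$ satisfy $|L(v)|/(d(v)+1) \geq 1 + \Omega(1)$; once this is achieved, the Local Lemma finishes the coloring.

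The key structural input from $F$-freeness is a \emph{local codegree sparsity lemma}: for every vertex $v$ and every subgraph $F^-$ obtained from $F$ by deleting one edge, the number of copies of $F^-$ rooted at $v$ in $G$ is at most $o(\Delta^{|V(F)|-1})$. This would be proven by a supersaturation / pigeonhole argument: too many $F^-$-copies rooted at $v$ would force two of them to share their missing edge, embedding a copy of $F$ into $G$. This is the natural generalization of the fact that $N(v)$ is edge-free when $F = K_3$, and it plays the same role in the ensuing probabilistic analysis---namely, bounding the Lipschitz constant of the palette-size function on the product probability space.

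The probabilistic core proceeds by first-moment calculations combined with Talagrand-type concentration. After one round of random color assignment and conflict resolution, the expected palette size at $v$ is $\E|L(v)| \approx k\exp(-d(v)/k)$. The local codegree sparsity lemma keeps the Lipschitz constant $O(1)$, yielding $|L(v)|$ concentrated around its mean with failure probability $\exp(-\Omega(\log \Delta))$. Iterating the nibble for $O(\log\Delta)$ rounds, and re-invoking the sparsity lemma (which is preserved under vertex deletion) at each stage, brings the ratio $|L(v)|/d(v)$ into the Local-Lemma-friendly regime, at which point the coloring is completed. Crucially, the $\log\log\Delta$ factor in \eqref{eq:loglog} is shaved off by using the sparsity lemma to upgrade the concentration to a multiplicative-error form at every scale of $d(v)$, following the template of Molloy's triangle-free argument.

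The hardest step will be handling $F$ with $\chi(F) \geq 3$. For such $F$, an $F$-free graph can be locally dense: it may contain $K_{t,t}$ with $t = \Theta(\Delta)$, so that $G[N(v)]$ has $\Theta(\Delta^2)$ edges. In this regime the supersaturation bound only holds in an averaged form, and Talagrand concentration requires refinement. The route I would pursue is a structural dichotomy: partition $V(G)$ into a \emph{sparse core} where the local codegree sparsity lemma applies in its strong form, and a \emph{bipartite-like residue} where a separate coloring, derived from an approximate bipartition obtained via Szemer\'edi regularity, is used. Stitching the two colorings together while preserving the $k = O(\Delta/\log\Delta)$ budget is the main obstacle, and this is where the bulk of the proof's technical weight lies.
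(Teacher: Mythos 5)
The statement you are trying to prove is Conjecture~\ref{conj:AKS}, which is an open problem: the paper does not prove it and only establishes it (Theorem~\ref{theo:col}, via Theorem~\ref{mainTheorem}) for \emph{almost bipartite} $F$, i.e.\ subgraphs of $K_{1,t,t}$. So your proposal cannot be compared to a proof in the paper; it has to stand on its own, and it does not. The central gap is your ``local codegree sparsity lemma,'' which is false for general $F$. Take $F=K_4$ and $G$ the complete tripartite graph with parts of size $\Delta/3$ (so maximum degree $2\Delta/3$): $G$ is $K_4$-free, yet the number of copies of $F^-=K_4$ minus an edge rooted at a fixed vertex $v$ is $\Theta(\Delta^3)=\Theta(\Delta^{|V(F)|-1})$, not $o(\Delta^{|V(F)|-1})$. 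The pigeonhole step also does not work as stated: many $F^-$-copies sharing the same ``missing pair'' only produce a copy of $F$ if that pair is actually an edge of $G$, which $F$-freeness is free to forbid. This is exactly why the only known results (including this paper's) require $F$ to be almost bipartite: then every neighborhood $N_H(c)$ is $K_{s,t}$-free and K\H{o}v\'ari--S\'os--Tur\'an gives genuine local edge-sparsity, which is the input the nibble/Talagrand analysis needs. For $F$ with $\chi(F)\geq 4$ (already $F=K_4$) neighborhoods can span $\Theta(\Delta^2)$ edges, and no averaged substitute is known.

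Your fallback for this hard regime --- a dichotomy into a ``sparse core'' and a ``bipartite-like residue'' handled via Szemer\'edi regularity --- is not a repair but a restatement of the open problem. The regularity lemma is vacuous for graphs of maximum degree $\Delta$ when $n\gg\Delta$ (all densities are $o(1)$), and applying it inside neighborhoods only yields an \emph{approximate} bipartition of $G[N(v)]$, which does not reduce the chromatic number: a graph whose neighborhoods are merely close to bipartite can still contain dense $K_{t,t}$'s with $t=\Theta(\Delta)$, and no $O(\Delta/\log\Delta)$ bound is known there. Even the concentration step you invoke (Lipschitz constant $O(1)$ for palette sizes) is precisely what fails in the locally dense case; the paper circumvents an analogous failure only for almost bipartite $F$, and only by switching to average color-degrees at the cost of the factor $4$. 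In short: your outline reproduces, in weaker form, the known argument for almost bipartite $F$, and at the point where new ideas would be required (any $F$ that is not almost bipartite) it appeals to tools that do not apply. If your sketch were correct, it would resolve Conjecture~\ref{conj:AKS} in full, which remains open.
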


    Conjecture~\ref{conj:AKS} is a strengthening of an earlier conjecture of Ajtai, Erd\H{o}s, Koml\'os, and Szemer\'edi \cite{AEKS}, who asked whether every $n$-vertex $F$-free graph $G$ of maximum degree $\Delta$ has independence number $\alpha(G) = \Omega(n \log \Delta / \Delta)$. Both conjectures remain wide open. The goal of this paper is to investigate the supporting evidence for Conjecture \ref{conj:AKS} in greater depth, and our main contributions---Theorems~\ref{theo:col} and \ref{mainTheorem}---improve and generalize most of the known results on the subject.
    
    To refine the discussion, we introduce the following notation. Given a graph $F$ and a natural number $\Delta$, let
    \[
        c(F, \Delta) \defeq \max \left\{ \frac{\chi(G)}{\Delta/\log \Delta} \,:\, \text{$G$ is an $F$-free graph of maximum degree $\Delta$} \right\},
    \]
    and set $c(F) \defeq \limsup_{\Delta \to \infty} c(F, \Delta)$. In other words, if $c(F)$ is finite, then it is the minimum quantity such that \[\chi(G) \leq (c(F) + o(1))\frac{\Delta}{\log \Delta}\] for all $F$-free graphs $G$ of maximum degree $\Delta$, where $o(1)$ indicates a function of $\Delta$ that approaches $0$ as $\Delta \to \infty$. Conjecture~\ref{conj:AKS} asserts that $c(F)$ is finite for every graph $F$.
    
    It is easy to see that Conjecture~\ref{conj:AKS} holds for a graph $F$ if and only if it holds for every connected component of $F$. Thus, we shall be primarily concerned with connected graphs $F$ in this paper. To date, the largest class of connected graphs for which Conjecture~\ref{conj:AKS} has been verified comprises the so-called \emphd{almost bipartite} graphs, i.e., graphs that can be made bipartite by removing at most one vertex. Equivalently, a graph is almost bipartite if it is isomorphic to a subgraph of the complete tripartite graph $K_{1,t,t}$ for some $t \in \N$. Conjecture~\ref{conj:AKS} was proved for almost bipartite graphs by Alon, Krivelevich, and Sudakov:
    
    \begin{theo}[{Alon--Krivelevich--Sudakov \cite[Corollary 2.4]{AKSConjecture}}]\label{theo:AKSAlmostBipartite}
        Conjecture~\ref{conj:AKS} holds for almost bipartite graphs $F$. That is, $c(F) < \infty$ for every almost bipartite graph $F$.
    \end{theo}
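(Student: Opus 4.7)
The plan is to first reduce to the case $F = K_{1,t,t}$ for some $t \in \N$, and then combine a density bound for neighborhoods with an existing chromatic-number bound for locally sparse graphs. Since any almost bipartite graph $F$ is a subgraph of $K_{1,t,t}$ for $t$ sufficiently large, and $F \subseteq K_{1,t,t}$ implies that every $F$-free graph is also $K_{1,t,t}$-free, it suffices to bound $\chi(G)$ for $K_{1,t,t}$-free $G$ of maximum degree $\Delta$ in terms of $\Delta$ and $t$.

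The key structural consequence of being $K_{1,t,t}$-free is that for every vertex $v$ of $G$, the subgraph induced on $N(v)$ is $K_{t,t}$-free: any copy of $K_{t,t}$ inside $N(v)$ would combine with $v$ to form a forbidden $K_{1,t,t}$. By the K\H{o}v\'ari--S\'os--Tur\'an theorem, every $K_{t,t}$-free graph on at most $\Delta$ vertices has $O(\Delta^{2-1/t})$ edges, so every neighborhood of $G$ spans at most $\Delta^2/f$ edges with $f = \Theta(\Delta^{1/t})$.

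It then remains to invoke a general chromatic-number bound for graphs with locally sparse neighborhoods, also due to Alon, Krivelevich, and Sudakov in the same paper. The statement is essentially: if $G$ has maximum degree $\Delta$ and every neighborhood spans at most $\Delta^2/f$ edges, then $\chi(G) = O(\Delta/\log f)$. Substituting $f = \Theta(\Delta^{1/t})$ gives $\chi(G) = O(t\Delta/\log\Delta)$, which verifies Conjecture~\ref{conj:AKS} for $F = K_{1,t,t}$ and yields the explicit bound $c(K_{1,t,t}) = O(t)$.

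The hard part is the sparse-neighborhoods chromatic bound invoked in the third step. The standard proof uses the semi-random \ep{``nibble''} method: one iteratively builds a proper partial coloring by activating random color assignments and retaining only non-conflicting ones, with the \LLL{} supplying a successful round at each stage. Local sparsity ensures that each color is very unlikely to be blocked for an uncolored vertex, so after roughly $\log f$ rounds the effective palette has shrunk to $O(\Delta/\log f)$ while the uncolored subgraph is sparse enough to be finished greedily. Maintaining the needed concentration estimates across all $\log f$ rounds, and propagating the sparseness hypothesis from iteration to iteration, is the main technical obstacle.
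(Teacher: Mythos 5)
Your proposal is correct and follows essentially the same route as the paper: reduce $F$ to $K_{1,t,t}$, note that $K_{1,t,t}$-freeness makes every neighborhood $K_{t,t}$-free, apply the K\H{o}v\'ari--S\'os--Tur\'an theorem to get local sparsity $\Delta^2/k$ with $k = \Omega(\Delta^{1/t})$, and invoke the Alon--Krivelevich--Sudakov bound $\chi(G) = O(\Delta/\log k)$ (Theorem~\ref{theo:sparse}) to conclude $c(K_{1,t,t}) = O(t)$. This is exactly how the paper derives Theorem~\ref{theo:AKSAlmostBipartite} from \cite[Theorem 1.1]{AKSConjecture}.
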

    
    In their proof of Theorem~\ref{theo:AKSAlmostBipartite}, Alon, Krivelevich, and Sudakov observe that a graph $G$ is $K_{1,t,t}$-free if and only if the neighborhood of every vertex in $G$ is $K_{t,t}$-free. By the \hyperref[KST]{K\H{o}v\'ari--Sós--Turán theorem} \cite{KST},
    this implies that the neighborhood of every vertex in $G$ spans at most $O(\Delta^{2-1/t})$ edges. Therefore, Theorem~\ref{theo:AKSAlmostBipartite} is a consequence of the following more general result:
    
    \begin{theo}[{Alon--Krivelevich--Sudakov \cite[Theorem 1.1]{AKSConjecture}}]\label{theo:sparse}
        If $G$ is a graph of maximum degree $\Delta$ where the neighborhood of every vertex spans at most $\Delta^2/k$ edges, then $\chi(G) = O(\Delta/\log k)$.
    \end{theo}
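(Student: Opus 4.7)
I will prove Theorem~\ref{theo:sparse} via the semi-random (``nibble'') method combined with the \LLL, in the spirit of the original argument of Alon, Krivelevich, and Sudakov. Set $q \defeq \lceil C \Delta / \log k \rceil$ for a sufficiently large absolute constant $C$; the aim is to construct a proper $q$-coloring of $G$ iteratively.

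\textbf{A single nibble step.} Throughout the process, each uncolored vertex $v$ maintains a list $L(v) \subseteq [q]$ of available colors, initialized to $[q]$. In one round, every uncolored vertex independently draws a uniformly random tentative color from its list; it retains the color iff no uncolored neighbor drew the same color; and each newly colored vertex is then randomly uncolored with a small equalization probability to keep the parameters evolving predictably. Write $\ell(v) \defeq |L(v)|$ and let $d(v)$ denote the uncolored-degree at $v$.

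\textbf{Key estimate.} The heart of the argument is a lower bound on the probability that a color $c \in L(v)$ survives in $L(v)$ after one round. The first-order inclusion--exclusion term is $\prod_{u \in N(v) \cap U}(1 - 1/\ell(u))$. Sparsity enters at second order: for a pair $u_1, u_2 \in N(v)$ of \emph{non-adjacent} uncolored neighbors, both can simultaneously retain $c$ with strictly positive probability, so the sum $\sum_{\{u_1,u_2\}}\Pr[u_1, u_2 \text{ both retain } c]$ taken over all such non-adjacent pairs contributes a positive correction that boosts $\Pr[c \text{ survives at } v]$. Under the hypothesis $e(N(v)) \leq \Delta^2/k$, there are at least $\binom{\Delta}{2} - \Delta^2/k$ such pairs; a careful inclusion--exclusion computation then yields a multiplicative gain of $1 + \Omega(1/\log k)$ per round in the ratio $\E[\ell(v)]/\E[d(v)]$.

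\textbf{Concentration, iteration, and termination.} Concentration of $\ell(v)$ and $d(v)$ around their expectations follows from Talagrand's inequality, and because the relevant random variables at $v$ depend only on vertices within distance~$2$ of $v$, the associated bad events have bounded local dependence, so the \LLL\ produces a simultaneous realization of all ``good'' events. Iterating for a suitable polylogarithmic-in-$k$ number of rounds drives the invariant $\ell(v) > d(v)$ at every still-uncolored vertex, at which point a greedy finishing step completes the $q$-coloring. The main obstacle is the single-round estimate: one must extract precisely the $\Omega(1/\log k)$ improvement (rather than merely $\Omega(1/\log \Delta)$) from the non-edge count in $N(v)$, and then propagate it through the iteration without losing the gain to concentration errors or to the equalization step. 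This forces the equalization probability to be coupled with $k$ and the inductive invariants on $\ell(v)/d(v)$ to be tracked to within vanishing relative error throughout.
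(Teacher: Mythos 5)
First, a point of reference: the paper does not prove Theorem~\ref{theo:sparse} at all --- it is quoted verbatim from Alon--Krivelevich--Sudakov \cite{AKSConjecture} as background, and only its \emph{statement} is used (to derive Theorem~\ref{theo:AKSAlmostBipartite} via K\H{o}v\'ari--S\'os--Tur\'an). So there is no in-paper argument to compare yours against; your proposal has to stand on its own as a reconstruction of the AKS proof.

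As such, it is a plan rather than a proof, and the gap sits exactly where you yourself place it: the single-round estimate. You claim that truncating inclusion--exclusion at second order over the $\binom{\Delta}{2}-\Delta^2/k$ non-adjacent pairs in $N(v)$ yields a multiplicative gain of $1+\Omega(1/\log k)$ per round in $\E[\ell(v)]/\E[d(v)]$, but with a palette of size $q=C\Delta/\log k$ the relevant retention probabilities are of order $k^{-\Omega(1)}$, the pairwise (and higher) terms are not negligible relative to the first-order term, and a Bonferroni truncation gives bounds in the wrong direction or of the wrong magnitude; extracting the stated gain is precisely the hard analytic content of the AKS/Johansson-type arguments and cannot be waved through as ``a careful inclusion--exclusion computation.'' Two further missing ingredients are consequential. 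First, concentration: the number of colors deleted from $L(v)$ is a sum of strongly correlated indicators (neighbors of $v$ sharing colors is exactly the mechanism you exploit in expectation), and it is here --- not in the expectation --- that the neighborhood-sparsity hypothesis must be invoked to make Talagrand/Azuma applicable; this is the same obstruction the present paper flags (its footnote in \S\ref{subsec:sketch}) and circumvents with average color-degrees, equalizing coin flips, and a wasteful procedure. Your sketch invokes Talagrand generically without explaining why the bad correlations are controlled. Second, iteration: the hypothesis $e(N(v))\le \Delta^2/k$ is about the original graph, and you do not say how the sparsity-driven gain (or a usable surrogate for it) is maintained for the residual lists/uncolored graph over the polylogarithmically many rounds, nor how the equalization probabilities are chosen so the gain is not eaten by the errors you acknowledge. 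Until the one-round lemma is stated precisely (with its concentration statement) and proved, the argument is a correct outline of the known strategy but not a proof.
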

    
    Applying Theorem~\ref{theo:sparse} with $k = \Omega(\Delta^{1/t})$ yields the bound $c(K_{1,t,t}) = O(t)$. This was recently sharpened to $c(K_{1,t,t}) \leq t$ by Davies, Kang, Pirot, and Sereni:
    
    \begin{theo}[{Davies--Kang--Pirot--Sereni \cite[\S5.6]{DKPS}}]\label{theo:DKPSAlmostBipartite}
        For every $t \in \N$, $c(K_{1,t,t}) \leq t$.
    \end{theo}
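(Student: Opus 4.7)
The plan is to combine two ingredients: a structural reduction from the $K_{1,t,t}$-freeness hypothesis to a bound on the edge density of each vertex neighborhood, and a sharp-constant refinement of Theorem~\ref{theo:sparse}.

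\textbf{Structural reduction.} First I would observe that $G$ is $K_{1,t,t}$-free if and only if for every $v \in V(G)$ the induced subgraph $G[N(v)]$ is $K_{t,t}$-free. The K\H{o}v\'ari--S\'os--Tur\'an theorem then gives
\[
    e(G[N(v)]) \,=\, O(\Delta^{2 - 1/t}),
\]
so every neighborhood spans at most $\Delta^2/k$ edges with $k = \Omega(\Delta^{1/t})$. This is already the hypothesis of Theorem~\ref{theo:sparse}, but that theorem only gives $\chi(G) = O(\Delta/\log k) = O(t\,\Delta/\log\Delta)$ with an unspecified constant, recovering only the original Alon--Krivelevich--Sudakov bound $c(K_{1,t,t}) = O(t)$.

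\textbf{Sharp-constant local coloring theorem.} The real task is to establish the leading-constant $1$ sharpening: if $G$ has maximum degree $\Delta$ and $e(G[N(v)]) \leq \Delta^2/k$ for every $v$ (with $k$ not too small relative to $\log \Delta$), then
\[
    \chi(G) \,\leq\, (1+o(1))\,\frac{\Delta}{\log k}.
\]
This is the local analogue of Molloy's theorem $c(K_3) \leq 1$: instead of triangle-freeness one merely assumes a per-vertex sparsity bound in each neighborhood. Substituting $k = \Theta(\Delta^{1/t})$ gives $\log k = (1/t + o(1))\log \Delta$ and hence $\chi(G) \leq (t + o(1))\,\Delta/\log\Delta$, i.e.\ $c(K_{1,t,t}) \leq t$.

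\textbf{Main obstacle.} The hard step is the sharp local theorem itself. I would attack it through a Molloy-style iterative nibble: repeatedly activate a small Bernoulli-random fraction of vertices, assign each an independent uniform random color from its current list, and uncolor any vertex whose assignment conflicts with a neighbor. One tracks, for each $v$, the residual list $L(v)$ and the number of still-uncolored neighbors that carry each surviving color. Crucially, when $e(G[N(v)]) \leq \Delta^2/k$, many pairs of neighbors of $v$ sharing a color candidate are non-adjacent, so the two induced uncolorings do not reinforce each other; this saves a factor of order $1/k$ in the expected loss of colors from $L(v)$ per step, exactly mirroring the role of triangle-freeness in Molloy's argument. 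Concentration is secured by the Lov\'asz Local Lemma with Talagrand-type inequalities, and after $\Theta(\log k)$ rounds the list size at every vertex exceeds its effective degree, allowing a symmetric LLL finish. Extracting the precise leading factor $1/\log k$ rather than some $C/\log k$ — sharp enough to yield $c(K_{1,t,t}) \leq t$ and not just $O(t)$ — is where the delicate bookkeeping of second-order error terms across the $\Theta(\log k)$ iterations becomes the technical crux.
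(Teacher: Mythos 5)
The decisive step in your plan is the ``sharp-constant local coloring theorem'': that $e(G[N(v)]) \leq \Delta^2/k$ for all $v$ forces $\chi(G) \leq (1+o(1))\Delta/\log k$. This is precisely the part that is neither proved by your sketch nor available in the literature. The sharpest known result under the edge-count hypothesis alone (due to Davies--Kang--Pirot--Sereni via their local occupancy framework) has the form $\chi(G) \leq (1+o(1))\,\Delta/\log\bigl(\Delta/\sqrt{\Sigma}\bigr)$ where $\Sigma$ bounds the number of edges in each neighborhood; with $\Sigma = \Theta(\Delta^{2-1/t})$ from K\H{o}v\'ari--S\'os--Tur\'an this gives only $(2t+o(1))\Delta/\log\Delta$, i.e.\ $c(K_{1,t,t}) \leq 2t$, not $t$. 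Your nibble heuristic does not close this factor of $2$: the assertion that neighborhood sparsity ``saves a factor of order $1/k$ in the expected loss of colors per step, exactly mirroring triangle-freeness'' is not an argument that pins the leading constant at $1$ rather than $2$ (or $4$), and indeed extracting the constant is exactly where all the difficulty lies. Note also that the present paper emphasizes that Theorem~\ref{theo:sparse} is tight up to a constant factor, so the aggregate-edge-count hypothesis is genuinely weaker information than $K_{1,t,t}$-freeness; any route through it must fight for the constant, and constant $1$ there is not known.

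This is also not how Davies--Kang--Pirot--Sereni obtain $c(K_{1,t,t}) \leq t$. Rather than compressing the hypothesis into a single edge count for the whole neighborhood $N(v)$, they exploit that \emph{every} subset $S \subseteq N(v)$ induces a $K_{t,t}$-free graph, hence spans $O(|S|^{2-1/t})$ edges, and feed this scale-by-scale sparsity into their local occupancy method (a hard-core-model computation bounding the fractional/list chromatic number); it is this use of K\H{o}v\'ari--S\'os--Tur\'an at all scales, not just at scale $\Delta$, that recovers the constant $t$ instead of $2t$. To repair your proposal you would either need to prove your constant-$1$ local sparsity theorem (a statement that would be new and is stronger than what is needed), or replace the reduction ``$K_{1,t,t}$-free $\Rightarrow$ sparse neighborhoods $\Rightarrow$ generic theorem'' by an argument that keeps the $K_{t,t}$-freeness of neighborhoods available throughout the coloring analysis, as DKPS do and as the present paper does in Lemma~\ref{keptExpectation}.
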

    
    Theorem~\ref{theo:DKPSAlmostBipartite} follows from a stronger form of Theorem~\ref{theo:sparse} due to Davies, Kang, Pirot, and Sereni, namely the bound $\chi(G) \leq (1 + o(1))\Delta/\log k$ for graphs in which every neighborhood spans at most $\Delta^2/k$ edges \cite[Theorem~5]{DKPS}. Here we strengthen Theorem~\ref{theo:DKPSAlmostBipartite} further and establish a \emph{uniform} upper bound on $c(F)$ for all almost bipartite $F$:
    
    \begin{theo}\label{theo:col}
        For every almost bipartite graph $F$, $c(F) \leq 4$.
    \end{theo}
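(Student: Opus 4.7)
The plan is to reduce to $F = K_{1,t,t}$: since every almost bipartite graph is a subgraph of $K_{1,t,t}$ for some $t\in\N$, it suffices to prove $c(K_{1,t,t}) \leq 4$ for each $t$. I would then use the observation underlying Theorem~\ref{theo:AKSAlmostBipartite}: a graph $G$ is $K_{1,t,t}$-free if and only if, for every $v\in V(G)$, the induced subgraph $G[N(v)]$ is $K_{t,t}$-free. The hypothesis thus becomes a \emph{local} one --- the neighborhood of every vertex forbids a fixed bipartite subgraph. Guided by the DP-coloring strengthening promised in the abstract, I would then state and prove a general auxiliary theorem: for any fixed bipartite graph $H$, if $G$ has maximum degree $\Delta$ and $G[N(v)]$ is $H$-free for every $v$, then $\chi_{DP}(G) \leq (4 + o(1))\Delta/\log\Delta$. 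Theorem~\ref{theo:col} then follows by taking $H = K_{t,t}$. Note that Theorem~\ref{theo:sparse} gives only $O(\Delta/\log k)$ under a merely numerical sparsity assumption, and optimising $k$ against the K\H ov\'ari--S\'os--Tur\'an bound $\Delta^{2-1/t}$ yields just $c(K_{1,t,t}) = O(t)$, matching Theorem~\ref{theo:DKPSAlmostBipartite}; the point is that the \emph{qualitative} bipartiteness of $H$ must be used to strip the $t$-dependence.

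\textbf{Nibble method in the DP-coloring framework.}  The auxiliary theorem would be proved by the semi-random (``nibble'') method adapted to DP-coloring. Fix a DP-cover of $G$ with lists of size $L \defeq \lceil 4\Delta/\log\Delta \rceil$; in each round every uncoloured vertex samples a uniformly random admissible colour, and the colour is retained only if no conflict with a neighbour's sample occurs under the cover. One tracks the residual list size $\ell(v)$ and the residual conflict-degree $d(v)$, and uses the Lov\'asz Local Lemma together with a suitable concentration inequality to guarantee the simultaneous typical behaviour at every vertex. After $\Theta(\log\log\Delta)$ iterations the residual problem should have a bounded ratio $d(v)/\ell(v)$, at which point it can be finished by a standard LLL-based completion lemma.

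\textbf{Main obstacle.}  The genuine difficulty is to push the concentration analysis through with a constant independent of $t$. Since $G[N(v)]$ may contain up to $\Delta^{2-1/t}$ edges, a careless application of a concentration inequality (as in the method behind Theorem~\ref{theo:sparse}) loses a factor that grows with $t$. I expect to need a bespoke argument --- for instance a Talagrand-type inequality applied after a bipartite decomposition of $G[N(v)]$ witnessing $K_{t,t}$-freeness, or an entropy-compression/Rosenfeld-style local argument --- engineered so that only the \emph{bipartite structure} of $H$, not its size, enters the quantitative analysis. The final factor $4$ (as opposed to the optimal $1$ achieved for bipartite forbidden subgraphs) should emerge from the per-round bookkeeping, and is plausibly the price paid for the extra apex vertex distinguishing almost bipartite forbidden subgraphs from bipartite ones.
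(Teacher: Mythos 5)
Your overall frame (reduce to $K_{1,t,t}$, recast the hypothesis as a local condition on neighborhoods, run a DP-coloring nibble with LLL plus a Talagrand-type inequality, finish with an LLL completion lemma) is the same skeleton the paper uses, but the proposal stops exactly where the actual work begins. The step you flag as the ``main obstacle'' and defer to ``a bespoke argument'' is the paper's central contribution, and without it the sketch fails: in the residual cover $\mathcal{H}'=(L',H')$ the color degrees $\deg_{H'}(c)$ are genuinely \emph{not} concentrated when color neighborhoods can span up to $\sim d^{2-1/t}$ edges, so no choice of concentration inequality applied color-by-color will recover the needed bound on $\Delta(H')$, and no ``bipartite decomposition witnessing $K_{t,t}$-freeness'' is known to fix this. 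The paper's resolution has four interlocking ingredients you would need to supply: (a) track the \emph{average} color-degree $\overline{\deg}_{\mathcal{H}'}(v)$ instead of the maximum, and delete from $L'(v)$ every color whose degree exceeds twice this average --- this truncation, not the apex vertex per se, is where the constant $4$ comes from (a factor $2$ for allowing twice the average and a factor $2$ because up to half the list may be deleted); (b) because these deletions make list sizes drift apart across iterations, one must maintain a coupled invariant between $|L(v)|$ and $\overline{\deg}_{\mathcal{H}}(v)$ (the potential $\delta(v)$ of Lemma~\ref{iterationTheorem}, in the style of Pettie--Su), not separate bounds on each; (c) the $K_{1,s,t}$-freeness enters only in bounding the \emph{expectation} of the kept/uncolored edge count, via the K\H{o}v\'ari--S\'os--Tur\'an theorem applied to color neighborhoods together with a good/bad color split (Lemma~\ref{expectationKeptUncolor}); and (d) the concentration of the averaged quantity in the DP setting requires the random partitioning of the edge set $S$ plus Exceptional Talagrand with explicit removal/uncoloring certificates (Lemma~\ref{concentrationKeptUncolor}), since distinct colors in $L(v)$ can be linked by paths in $H$ and are not independent.

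Two smaller corrections: your attribution of the factor $4$ to ``the price paid for the extra apex vertex'' is only right in spirit --- the apex is why per-color concentration fails, but the quantitative $4=2\times 2$ arises from the truncation-and-halving bookkeeping above; and the iteration count in this scheme is $O(\log d\log\log d)$ rounds of a small-bite (activation probability $\approx 1/(\ell\log d)$) wasteful procedure, not $\Theta(\log\log\Delta)$ rounds of a uniform-sample nibble. One genuine structural difference worth noting: the paper imposes $K_{1,s,t}$-freeness on the \emph{cover graph} $H$ (Theorem~\ref{mainTheorem}), which is strictly more general than your formulation on neighborhoods of $G$ and is what makes the DP statement of Corollary~\ref{mainCorollary} possible; your version would still suffice for Theorem~\ref{theo:col} itself, but as written the proposal has a gap rather than an alternative proof.
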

    
    The fact that we can bound $c(F)$ by a universal constant in all the known cases of Conjecture~\ref{conj:AKS} is somewhat surprising, especially since, 
    by another result of Alon, Krivelevich, and Sudakov, the bound in Theorem~\ref{theo:sparse} is tight up to a constant factor \cite[Proposition 1.2]{AKSConjecture}. In particular, in contrast to Theorem~\ref{theo:DKPSAlmostBipartite}, Theorem~\ref{theo:col} cannot be derived from a stronger version of Theorem~\ref{theo:sparse}, i.e., a different approach is required to prove it. It is also worth emphasizing that the following lower bound on the independence number of $F$-free graphs, which follows immediately from Theorem~\ref{theo:col}, is already new:

    \begin{corl}\label{corl:alpha}
        Fix an almost bipartite graph $F$. If $G$ is an $n$-vertex $F$-free graph of maximum degree $\Delta$, then
        \[
            \alpha(G) \,\geq\, \left(\frac{1}{4} - o(1)\right) \frac{\log \Delta}{\Delta} \,n.
        \]
    \end{corl}
    
    Prior to this work, it was not known whether a lower bound on $\alpha(G)$ could be made independent of $F$ (modulo lower order terms). In particular, for $K_{1,t,t}$-free graphs $G$, the best previously known bound was the one implied by Theorem~\ref{theo:DKPSAlmostBipartite}, i.e.,
    \[
        \alpha(G) \,\geq\, \left(\frac{1}{t} - o(1)\right) \frac{\log \Delta}{\Delta} \,n.
    \]
    
    Let us now revisit Conjecture~\ref{conj:AKS} in light of Theorem~\ref{theo:col}.
    On the one hand, since Theorem~\ref{theo:col} covers \emph{all} connected graphs $F$ currently known to satisfy Conjecture~\ref{conj:AKS}, it seems natural to raise the following question:

    \begin{prob}[Uniform version of the Alon--Krivelevich--Sudakov Conjecture]\label{conj:uniformAKS}
        Is there is a universal constant $c > 0$ such that $c(F) \leq c$ for all graphs $F$?
    \end{prob}

    An affirmative solution to Problem~\ref{conj:uniformAKS} would be a significant breakthrough, as even the weaker Conjecture~\ref{conj:AKS} is far from being settled. On the other hand, the fact that Problem~\ref{conj:uniformAKS} has a positive answer for almost bipartite graphs $F$ 
    may indicate that they form a very special class, and a bold generalization such as Problem~\ref{conj:uniformAKS} (or indeed Conjecture~\ref{conj:AKS}!) could be seen as
    unduly rash. As such, it may be  that Problem~\ref{conj:uniformAKS} is a good target for a \emph{negative} solution, which 
     would serve as a convenient stepping stone toward the more ambitious goal of
    falsifying the original \hyperref[conj:AKS]{Alon--Krivelevich--Sudakov conjecture}. Our own view of the matter can be nicely summed up in the words of Ajtai, Erd\H{o}s, Koml\'os, and Szemer\'edi \cite[314]{AEKS} (they were referring to their conjectured lower bound on the independence number for graphs $G$ under the assumptions of Conjecture~\ref{conj:AKS}):

    \begin{quote}
        ``\textsl{Perhaps {\normalfont[the conjecture]} is too optimistic, but we feel that it is an interesting and challenging question}.''
    \end{quote}

    {
		\renewcommand{\arraystretch}{1.3}
		\begin{table}[b!]
			\caption{Known bounds on $c(F)$.}\label{table:bounds}
			\begin{tabular}{| l || l || l |}
				\hline
				$F$ & $c(F)$ & References\\\hline\hline
				forest & $0$ & \\\hline
				not forest & $\geq 1/2$ & Bollob\'as \cite{BollobasIndependence}
				\\\hline
				\multirow{3}{*}{$K_3$} & finite & Johansson \cite{Joh_triangle} \\\cline{2-3}
				& $\leq 4$ & Pettie--Su \cite{PS15} \\\cline{2-3}
				& $\leq 1$ & Molloy \cite{Molloy}\\\hline
				cycle & $\leq 1$ & Davies--Kang--Pirot--Sereni \cite{DKPS}\\\hline
				fan & $\leq 1$ & Davies--Kang--Pirot--Sereni \cite{DKPS}\\\hline
				bipartite & $\leq 1$ & \cite{BipartiteNibble}\\\hline
				\multirow{3}{*}{$K_{1,t,t}$} & $O(t)$ & Alon--Krivelevich--Sudakov \cite{AKSConjecture} \\\cline{2-3}
				& $\leq t$ & Davies--Kang--Pirot--Sereni \cite{DKPS}\\\cline{2-3}
				& $\leq 4$ & \textbf{this paper}\\\hline
			\end{tabular}
		\end{table}	
	}
    
    To put Theorem~\ref{theo:col} and Problem~\ref{conj:uniformAKS} into perspective, let us now give a brief overview of some of the known bounds on $c(F)$, which we summarize in Table~\ref{table:bounds}.
    If $F$ is a forest, then every $F$-free graph is $(|V(F)| - 2)$-degenerate \cite[Corollary 1.5.4]{Die} and hence its chromatic number is at most $|V(F)| - 1$ \ep{regardless of the maximum degree}, which implies that $c(F) = 0$. On the other hand, if $F$ contains a cycle, then $c(F) \geq 1/2$, because there exist $\Delta$-regular graphs of arbitrarily high girth with chromatic number at least $\Delta/(2\log \Delta)$, as shown by Bollob\'as \cite{BollobasIndependence} using random regular graphs. It is not known if there exist any graphs $F$ with $c(F) > 1/2$. On the other hand, it is also not known if there are any graphs $F$ containing a cycle and satisfying $c(F) < 1$: it turns out that the value $1$ is a natural threshold for upper bounds on $c(F)$, coinciding with the so-called {shattering threshold} for colorings of random graphs of average degree $\Delta$ \cite{Zdeborova,Achlioptas} and the density threshold for factor of i.i.d. independent sets in $\Delta$-regular trees \cite{RV}.

    The first nontrivial case of Conjecture~\ref{conj:AKS}, known a few years before the conjecture was actually made, was $F = K_3$, proved by Johansson \cite{Joh_triangle}. 
    Johansson's result was preceded by the work of Ajtai, Koml\'os, and Szemer\'edi \cite{AjtaiKS1,AjtaiKS2} and Shearer \cite{Shearer} concerning the independence number of triangle-free graphs, and by the bound on the chromatic number for graphs that are both $K_3$- and $C_4$-free due to Kim \cite{Kim95}. 
    Johansson's paper was never published, but a textbook presentation of Johansson's argument can be found in \cite[\S13]{MolloyReed}. Since Johansson's manuscript is unpublished, it is unclear what exact bound on $c(K_3)$ it yields; in \cite[\S13]{MolloyReed}, Molloy and Reed report the bound $c(K_3) \leq 9$ and prove $c(K_3) \leq 160$. This was later improved to $c(K_3) \leq 4$ by Pettie and Su \cite{PS15} and finally to $c(K_3) \leq 1$ by Molloy \cite{Molloy}. An alternative proof of Molloy's bound $c(K_3) \leq 1$ was given in \cite{JMTheorem}. Molloy's result was generalized by Davies, Kang, Pirot, and Sereni, who showed that $c(C_k) \leq 1$ for the $k$-cycle $C_k$ \cite[Theorem 4]{DKPS}, and, even more generally, $c(F_k) \leq 1$ for the $k$-fan $F_k$ \ep{the $k$-fan is obtained from a path on $k - 1$ vertices by adding a new universal vertex
    } \cite[Theorem 6]{DKPS}. In an earlier paper \cite{BipartiteNibble}, we identified another class of graphs $F$ with $c(F) \leq 1$, namely the bipartite graphs.
    
    In view of the above discussion, it is possible that the answer to Problem~\ref{conj:uniformAKS} is ``yes'' with the constant $c$ as small as $1$ or even $1/2$,
    although our current methods do not allow reducing the bound in Theorem~\ref{theo:col} below $4$.
 
    \subsection{DP-coloring}\label{subsec:DP}

    Problems such as Conjecture~\ref{conj:AKS} can naturally be considered for stronger variants of graph coloring, such as list-coloring. This often requires additional insight; for example, the version of Theorem~\ref{theo:sparse} (and hence also of Theorem~\ref{theo:AKSAlmostBipartite}) for list-coloring was proved by Vu \cite{Vu} in a manner quite different from the arguments employed by Alon, Krivelevich, and Sudakov in \cite{AKSConjecture}. Our Theorem~\ref{theo:col} also extends to the list-coloring framework; indeed, we obtain Theorem~\ref{theo:col} as a consequence of a considerably more general result in the context of \emph{DP-coloring} \ep{also known as \emph{correspondence coloring}},  
    introduced by Dvo\v{r}\'ak and Postle \cite{DPCol} as a further 
    generalization of list-coloring. (This notion is closely related to \emph{local conflict coloring} introduced by Fraigniaud, Heinrich, and Kosowski \cite{FHK} with a view toward applications in distributed computing.)
    
    Just as in ordinary list-coloring, we assume that every vertex $v \in V(G)$ of a graph $G$ is given a list $L(v)$ of available colors to choose from. In contrast to list-coloring however, the identifications between the colors in the lists may vary from edge to edge. More precisely, each edge $uv \in E(G)$ is assigned a matching $M_{uv}$ (not necessarily perfect and possibly empty) from $L(u)$ to $L(v)$. A \emph{proper DP-coloring} is a mapping $\phi$ that assigns a color $\phi(v) \in L(v)$ to each vertex $v \in V(G)$ so that whenever $uv \in E(G)$, we have $\phi(u)\phi(v) \notin M_{uv}$. Note that list-coloring is indeed a special case of DP-coloring which occurs when the colors ``correspond to themselves,'' i.e., for each $c \in L(u)$ and $c' \in L(v)$, we have $cc' \in M_{uv}$ if and only if $c = c'$.

    Formally, we describe DP-coloring using an auxiliary graph $H$ called a \emph{DP-cover} of $G$. In this model, we treat the lists of colors assigned to distinct vertices as pairwise disjoint \ep{this is a convenient assumption that does not restrict the generality of the model}. 
    
    \begin{defn}\label{corrcov}
    A \emphdef{DP-cover} \ep{or a \emphd{correspondence cover}} of a graph $G$ is a pair $\mathcal{H} = (L, H)$, where $H$ is a graph and $L \colon V(G) \to 2^{V(H)}$ is a function such that:
    \begin{itemize}
        \item The sets $(L(v) \,:\, v \in V(G))$ partition $V(H)$.
        \item For each $v \in V(G)$, $L(v)$ is an independent set in $H$.
        \item For $u$, $v \in V(G)$, the edges between $L(u)$ and $L(v)$ in $H$ form a matching; this matching is empty whenever $uv \notin E(G)$.
    \end{itemize}
    We call the vertices of $H$ \emphd{colors}.
    For $c \in V(H)$, we let $L^{-1}(c)$ denote the \emphd{underlying vertex} of $c$ in $G$, i.e., the unique vertex $v \in V(G)$ such that $c \in L(v)$. If two colors $c$, $c' \in V(H)$ are adjacent in $H$, we say that they \emphd{correspond} to each other and write $c \sim c'$.
    
    
    An \emphd{$\mathcal{H}$-coloring} is a mapping $\phi \colon V(G) \to V(H)$ such that $\phi(v) \in L(v)$ for all $v \in V(G)$. Similarly, a \emphd{partial $\mathcal{H}$-coloring} is a partial mapping $\phi \colon V(G) \pto V(H)$ such that $\phi(v) \in L(v)$ whenever $\phi(v)$ is defined. A \ep{partial} $\mathcal{H}$-coloring $\phi$ is \emphd{proper} if the image of $\phi$ is an independent set in $H$, i.e., if $\phi(u) \not \sim \phi(v)$ 
    for all $u$, $v \in V(G)$ such that $\phi(u)$ and $\phi(v)$ are both defined.
    
    A DP-cover $\mathcal{H} = (L,H)$ is \emphdef{$k$-fold} if $|L(v)| \geq k$ for all $v \in V(G)$. The \emphdef{DP-chromatic number} of $G$, denoted by $\chi_{DP}(G)$, is the smallest $k$ such that $G$ admits a proper $\mathcal{H}$-coloring with respect to every $k$-fold DP-cover $\mathcal{H}$.
\end{defn}

Since DP-coloring is a generalization of list-coloring, it is clear that $\chi_\ell(G) \leq \chi_{DP}(G)$ for all graphs $G$ (here $\chi_\ell(G)$ denotes the list-chromatic number of $G$). This inequality can be strict; moreover, the ratio $\chi_{DP}(G)/\chi_\ell(G)$ can be arbitrarily large. For example, the complete bipartite graph $K_{t,t}$ with $t \to \infty$ satisfies $\chi(K_{t,t}) = 2$, $\chi_\ell(K_{t,t}) = \Theta(\log t)$, and $\chi_{DP}(K_{t,t}) = \Theta(t/\log t)$ \cite{asymptoticDP}.

    The framework of DP-coloring naturally allows one to put structural constraints on the cover graph $H$ as opposed to the underlying base graph $G$.
    For instance, if $\mathcal{H} = (L, H)$ is a DP-cover of a graph $G$, then $\Delta(H) \leq \Delta(G)$, so an upper bound on $\Delta(H)$ is a weaker assumption than the same upper bound on $\Delta(G)$, and there exist a number of results in list and correspondence coloring in which the number of available colors given to each vertex is a function of $\Delta(H)$ as opposed to $\Delta(G)$. This framework, often referred to as the \emph{color-degree setting}, was pioneered by Kahn \cite{KahnListEdge}, Kim \cite{Kim95}, Johansson \cite{Joh_triangle,Joh_sparse}, and Reed \cite{Reed}, among others. For a selection of a few more recent examples, see \cite{BohmanHolzman, ReedSud, LohSudakov, Palette, CK, KangKelly, GlockSudakov}.
    
    In general, $G$ being $F$-free does not imply that a cover graph $H$ is $F$-free as well (see Fig.~\ref{fig:not free}), but it does so for certain graphs $F$:

    \begin{prop}[{\cite[Proposition 1.10(S2)]{anderson2024coloring}}]\label{prop: G F free implies H F free}
        Let $F$ and $G$ be graphs and let $\mathcal{H} = (L, H)$ be a DP-cover of $G$.
        Assume that every two nonadjacent vertices in $F$ have a common neighbor. If $G$ is $F$-free, then so is $H$.
    \end{prop}

    Note that if Conjecture~\ref{conj:AKS} holds for complete graphs $F$, then it holds for all graphs (because every graph is a subgraph of a complete graph). Since complete graphs $F$ vacuously satisfy the assumption in Proposition~\ref{prop: G F free implies H F free}, the following statement is a natural strengthening of Conjecture~\ref{conj:AKS}:

    \begin{figure}[tb!]
	\centering
	\begin{subfigure}[t]{.43\textwidth}
		\centering
		\begin{tikzpicture}[scale=2]
		\node[circle,fill=black,draw,inner sep=0pt,minimum size=4pt] (a) at (0,0) {};
		\path (a) ++(90:1) node[circle,fill=black,draw,inner sep=0pt,minimum size=4pt] (b) {};
            \path (a) ++(30:1) node[circle,fill=black,draw,inner sep=0pt,minimum size=4pt] (c) {};
            \path (c) ++(30:1) node[circle,fill=black,draw,inner sep=0pt,minimum size=4pt] (d) {};
            \path (d) ++(-90:1) node[circle,fill=black,draw,inner sep=0pt,minimum size=4pt] (e) {};

            \draw[thick] (c) -- (a) -- (b) -- (c) -- (d) -- (e) -- (c);
		
		\end{tikzpicture}
		\caption{A $C_6$-free graph $G$.}\label{fig:not free:graph}
	\end{subfigure}%
	\qquad\qquad%
	\begin{subfigure}[t]{.43\textwidth}
		\centering
		\begin{tikzpicture}[scale=2]
		\node[circle,fill=black,draw,inner sep=0pt,minimum size=4pt] (a) at (0,0) {};
		\path (a) ++(90:1) node[circle,fill=black,draw,inner sep=0pt,minimum size=4pt] (b) {};
            \path (a) ++(30:1) node[circle,fill=black,draw,inner sep=0pt,minimum size=4pt] (c) {};
            \path (c) ++(30:1) node[circle,fill=black,draw,inner sep=0pt,minimum size=4pt] (d) {};
            \path (d) ++(-90:1) node[circle,fill=black,draw,inner sep=0pt,minimum size=4pt] (e) {};

            \path (a) ++(90:0.3) node[circle,fill=black,draw,inner sep=0pt,minimum size=4pt] (a1) {};
            \path (b) ++(90:0.3) node[circle,fill=black,draw,inner sep=0pt,minimum size=4pt] (b1) {};
            \path (c) ++(90:0.3) node[circle,fill=black,draw,inner sep=0pt,minimum size=4pt] (c1) {};
            \path (d) ++(90:0.3) node[circle,fill=black,draw,inner sep=0pt,minimum size=4pt] (d1) {};
            \path (e) ++(90:0.3) node[circle,fill=black,draw,inner sep=0pt,minimum size=4pt] (e1) {};

            \draw[rounded corners=2pt, black] ([xshift=-6pt, yshift=8pt]a1.south) -- ([xshift=-6pt, yshift=-8pt]a.north) -- ([xshift=6pt, yshift=-8pt]a.north) -- ([xshift=6pt, yshift=8pt]a1.south) -- cycle;

            \draw[rounded corners=2pt, black] ([xshift=-6pt, yshift=8pt]b1.south) -- ([xshift=-6pt, yshift=-8pt]b.north) -- ([xshift=6pt, yshift=-8pt]b.north) -- ([xshift=6pt, yshift=8pt]b1.south) -- cycle;

            \draw[rounded corners=2pt, black] ([xshift=-6pt, yshift=8pt]c1.south) -- ([xshift=-6pt, yshift=-8pt]c.north) -- ([xshift=6pt, yshift=-8pt]c.north) -- ([xshift=6pt, yshift=8pt]c1.south) -- cycle;

            \draw[rounded corners=2pt, black] ([xshift=-6pt, yshift=8pt]d1.south) -- ([xshift=-6pt, yshift=-8pt]d.north) -- ([xshift=6pt, yshift=-8pt]d.north) -- ([xshift=6pt, yshift=8pt]d1.south) -- cycle;

            \draw[rounded corners=2pt, black] ([xshift=-6pt, yshift=8pt]e1.south) -- ([xshift=-6pt, yshift=-8pt]e.north) -- ([xshift=6pt, yshift=-8pt]e.north) -- ([xshift=6pt, yshift=8pt]e1.south) -- cycle;

            \draw[thick] (a1) -- (c) -- (e1) -- (d) -- (c1) -- (b) -- (a1);

		\end{tikzpicture}
		\caption{A $2$-fold cover of $G$ containing a $C_6$.}\label{fig:not free:cover}
	\end{subfigure}%
	\caption{A $C_6$-free graph with a $2$-fold cover containing $C_6$.}\label{fig:not free}
\end{figure}

    \begin{conj}[DP-version of the Alon--Krivelevich--Sudakov Conjecture]\label{conj:ABD}
        For every graph $F$, there exist constants $c$, $d_0 > 0$ such that the following holds. Let $G$ be a graph and let $\mathcal{H} = (L,H)$ be a DP-cover of $G$. If $H$ is $F$-free and has maximum degree $d \geq d_0$ and if $|L(v)| \geq c d /\log d$ for all $v \in V(G)$, then $G$ admits a proper $\mathcal{H}$-coloring.
    \end{conj}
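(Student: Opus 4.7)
The plan is to adapt a Molloy-style iterative random partial $\mathcal{H}$-coloring to the DP setting and to extract from the $F$-freeness of the cover graph $H$ enough structural sparsity to drive the recurrence. Fix a graph $F$ with $r \defeq |V(F)|$ and let $\mathcal{H} = (L, H)$ be a DP-cover of $G$ with $H$ being $F$-free, $\Delta(H) \leq d$, and $|L(v)| \geq c d/\log d$, with $c = c(F)$ to be chosen sufficiently large. The DP formulation is exactly what licenses a clean inductive setup, since an upper bound on $\Delta(H)$ and $F$-freeness of $H$ are both preserved when we pass to a residual cover after partial coloring.

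The main loop would execute $T = \Theta(\log d)$ rounds of wasteful coloring: in round $t$, for each not-yet-colored vertex $v$ and each currently available color $x \in L_t(v)$, activate $x$ independently with probability $p = \Theta(1/d)$; retain $x$ only if no neighbor of $x$ in $H$ is activated for its own vertex; among the colors of $v$ that survive, commit to one. Track the residual list $L_t(v)$ and, for each $x \in L_t(v)$, the residual degree $d_t(x) \defeq |N_H(x) \cap \bigcup_u L_t(u)|$. The Lov\'asz Local Lemma, applied at each round to the standard concentration events, is used to preserve with high probability the invariants that $|L_t(v)|$ stays within a constant factor of its mean $(1-p)^t |L(v)|$ and that $d_t(x)$ shrinks strictly faster than $|L_t(v)|$, so that after $T$ rounds one can finish greedily (or with one final LLL application). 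The substantive point is that the ratio $d_t(x)/|L_t(v)|$ must decay by a factor bounded away from $1$ per round, and this is where $F$-freeness of $H$ must enter: in the almost bipartite case one uses that $N_H(x)$ spans $O(d^{2-1/t})$ edges by K\H{o}v\'ari--S\'os--Tur\'an, which guarantees that many pairs of neighbors of $x$ can be simultaneously kept alive, producing an extra $\Theta(1/\log d)$ savings per round. For general $F$, I would substitute an iterated Ramsey bound: every $F$-free graph on $d$ vertices contains an independent set of size $\Omega_F(d^{1/(r-2)})$, which, applied recursively within neighborhoods, yields a weaker but still strictly positive per-round savings.

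The main obstacle is precisely the gap between the Ramsey-type neighborhood estimate and the K\H{o}v\'ari--S\'os--Tur\'an bound: by Proposition 1.2 of Alon--Krivelevich--Sudakov, no uniform neighborhood-sparsity bound can be extracted from $F$-freeness alone, so the na\"ive approach above loses a polynomial factor in $1/\log d$ per round and the recurrence fails to close. To sidestep this I would induct on $r$: first reduce general $F$-free covers to the $K_r$-free case via Erd\H{o}s--Szekeres, then handle $K_r$-free covers by exploiting that the neighborhood in $H$ of every color is $K_{r-1}$-free, so the residual cover at each round is itself amenable to the inductive hypothesis. Because Conjecture~\ref{conj:ABD} permits the constants $c$ and $d_0$ to depend on $F$, only finiteness of the recursion is required rather than a uniform bound. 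The genuinely hard step is the base of this induction: already the $K_4$-free DP-coloring case is equivalent to an open instance of Conjecture~\ref{conj:AKS}, so any complete proof would have to resolve that first, and it is the principal reason the conjecture is still open beyond the almost bipartite regime handled in this paper.
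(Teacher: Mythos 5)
The statement you are addressing is Conjecture~\ref{conj:ABD}, and the paper does not prove it: it is stated as an open conjecture, and the paper's contribution (Theorem~\ref{mainTheorem}) establishes only the special case where $F$ is almost bipartite, i.e.\ $F \subseteq K_{1,s,t}$, with $c = 4 + o(1)$. So there is no proof in the paper to compare yours against, and your proposal, by its own admission in the final sentence, is not a proof either --- it is a strategy outline whose decisive step is missing. The concrete gap is the one you half-identify: a Molloy/nibble-type iteration needs, at every round, a quantitative sparsity statement about the neighborhood $N_H(x)$ of each surviving color (in the almost bipartite case this is the K\H{o}v\'ari--S\'os--Tur\'an bound of $O(d^{2-1/t})$ edges, which is exactly what the paper's Lemma~\ref{keptExpectation} uses), and $F$-freeness of $H$ for general $F$ supplies no such bound: by \cite[Proposition 1.2]{AKSConjecture} the edge-sparsity route is essentially exhausted by Theorem~\ref{theo:sparse}, and a Ramsey-type independent set of polynomial size inside $N_H(x)$ does not convert into a per-round savings in the ratio $d_t(x)/|L_t(v)|$ that is bounded away from zero --- closing that gap is precisely the open content of Conjecture~\ref{conj:AKS}, where the best general bound (Johansson's) still carries the $\log\log\Delta$ factor.

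Your proposed repair does not close it. The reduction from $F$-free to $K_r$-free covers with $r = |V(F)|$ is trivial (since $F \subseteq K_r$, any $F$-free graph is $K_r$-free; no Erd\H{o}s--Szekeres argument is needed), but it replaces the problem by a strictly harder one, and the intended induction on $r$ has no functioning inductive step: the fact that $H[N_H(x)]$ is $K_{r-1}$-free is a statement about a subgraph of the cover graph, not a DP-cover of anything, and what the nibble analysis requires is control of edge counts or codegrees inside $N_H(x)$ during the iteration, not the colorability of that neighborhood; so the inductive hypothesis (a coloring statement for $K_{r-1}$-free covers) cannot be invoked where you need it. Moreover the base case you would need --- already $K_4$-free covers, or even $K_4$-free graphs in the ordinary chromatic-number setting --- is open. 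In short, the proposal reduces an open conjecture to another open problem, which is consistent with the paper: the paper deliberately restricts to almost bipartite $F$ exactly because that is the regime where the neighborhood sparsity needed to run the wasteful coloring procedure (with the average color-degree bookkeeping of \S\ref{sectionProofOfIterationTheorem}--\S\ref{sectionProofofConcentration}) is actually available.
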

    
    In \cite{BipartiteNibble} we confirmed Conjecture~\ref{conj:ABD} in the case when $F$ is bipartite with $c = 1 + o(1)$. The main result of this paper is a proof of Conjecture~\ref{conj:ABD} for almost bipartite $F$ with $c = 4 + o(1)$:
    
    \begin{theo}\label{mainTheorem}
        There is a constant $\alpha > 0$ such that for every $\epsilon > 0$, there is $d_0 \in \N$ such that the following holds. Suppose that $d$, $s$, $t \in \N$ satisfy
        \[
            d \geq d_0,\quad s \leq d^{\alpha\epsilon}, \quad \text{and} \quad  t \leq \frac{\alpha\epsilon\log d}{\log \log d}.
        \]
        If $G$ is a graph and $\mathcal{H} = (L,H)$ is a DP-cover of $G$ such that:
        \begin{enumerate}[label=\ep{\normalfont\roman*}]
            \item $H$ is $K_{1,s,t}$-free,
            \item $\Delta(H) \leq d$, and
            \item $|L(v)| \geq (4+\epsilon)d/\log d$ for all $v \in V(G)$,
        \end{enumerate}
        then $G$ has a proper $\mathcal{H}$-coloring.
    \end{theo}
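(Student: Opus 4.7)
The plan is to adapt the iterative \emph{nibble} (semi-random coloring) procedure developed for the bipartite case in \cite{BipartiteNibble} to the almost bipartite setting. I would color the vertices of $G$ in stages: at each stage we run a partial random coloring, discard any color assignments that create a conflict in $H$, and pass to the residual instance consisting of vertices that remain uncolored together with the palette of colors still available to them. After $\Theta(\log d)$ rounds the effective maximum degree drops to a small constant, and the process is completed with one final application of the $\LLL$.

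For each still-uncolored vertex $v$ the procedure maintains a reduced palette $L'(v) \subseteq L(v)$ and an effective residual degree $d'(v)$, chosen so that the key invariant
\[
    |L'(v)| \,\geq\, (4+\eps/2)\,\frac{d'(v)}{\log d'(v)}
\]
is preserved with positive probability throughout. Concentration of $|L'(v)|$ and $d'(v)$ around their conditional expectations would be established via Talagrand-style inequalities, and the $\LLL$ would then handle the dependencies to guarantee that the invariant is simultaneously satisfied at every vertex.

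The heart of the argument is the one-step computation. In each round, each color $c \in V(H)$ is independently activated with some carefully chosen probability $p = p(d)$, and an activated color is \emph{kept} only if no $H$-neighbor of $c$ is also activated. The task is to show that the expected changes in $|L'(v)|$ and $d'(v)$ respect the invariant. This is where the $K_{1,s,t}$-freeness of $H$ enters: by the K\H{o}v\'ari--S\'os--Tur\'an theorem, the neighborhood $N_H(c)$ of each color is $K_{s,t}$-free and hence spans at most $O(d^{2-1/t})$ edges, which under the hypotheses $s \leq d^{\alpha\eps}$ and $t \leq \alpha\eps\log d/\log\log d$ is a $d^{1-\Omega(\eps)}$ improvement over the trivial bound $d^2$. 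This sparsity decorrelates the survival events of distinct colors in $L'(v)$ enough to carry through the analogue of the Johansson--Molloy entropy computation used in \cite{BipartiteNibble}.

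The main obstacle, and the source of the constant $4$, is that $K_{1,s,t}$-freeness only guarantees $K_{s,t}$-freeness of the neighborhood of a \emph{single} color in $H$. Two colors $c, c' \in L(v)$ can nonetheless share a common ``apex'' color $c^{\ast}$ in the cover that is simultaneously adjacent to many colors in palettes of neighbors of $v$, and such apex colors create correlations among the survival events at $v$ that have no analogue in the purely bipartite setting of \cite{BipartiteNibble}. I would deal with these by a preprocessing step in each nibble: identify the dangerous apex colors at each vertex, remove them (or freeze their associated matchings) before the random activation step, and charge the resulting loss in $|L'(v)|$ to the slack in the $4+\eps$ bound. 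Calibrating this trade-off---so that the apex correction is absorbed on one side while the entropy argument still gains a sufficient factor on the other---should yield the constant $4$; any improvement beyond $4$ appears to require a substantially more refined treatment of the apex colors.
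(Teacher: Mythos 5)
Your high-level framework (iterated wasteful nibble, Talagrand plus the local lemma, finish with a final local-lemma step) matches the paper's, but the proposal misses the actual crux and the step you lean on would fail. The central difficulty in this setting is that the residual degree $\deg_{H'}(c)$ of an individual color is \emph{not} concentrated: the dangerous structure is a color $z$ \emph{not} adjacent to $c$ that is adjacent to many colors of $N_H(c)$, i.e.\ a $K_{2,m}$ with non-adjacent apexes, and $K_{1,s,t}$-freeness does not forbid this at all (it only forbids the case $z \sim c$). So the K\H{o}v\'ari--S\'os--Tur\'an sparsity of $N_H(c)$ does not ``decorrelate'' the quantities your invariant needs; in \cite{BipartiteNibble} the analogous computation used $K_{s,t}$-freeness of all of $H$ (bounded codegrees), which is exactly what you no longer have. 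Relatedly, the correlation you propose to fix is misplaced: two colors $c,c' \in L(v)$ can never share an $H$-neighbor, because the edges between two lists form a matching, so the survival events of distinct colors in the same list are already independent; the paper even uses this to apply a plain Chernoff bound to $|K(v)|$. Your ``apex preprocessing'' is therefore aimed at a non-problem, while the real problem (non-adjacent apexes inflating the variance of $\deg_{H'}(c)$) is untouched by it: there is no bound on how many such apexes exist, they live in lists of \emph{other} vertices (so deleting them is an unaccounted loss at those vertices, potentially repeated across many $v$), and no computation is offered showing the loss can be charged to the slack in $4+\eps$.

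Because of this, the provenance of the constant $4$ in your sketch is unsubstantiated, and it is in fact different from the paper's. The paper never tries to concentrate $\deg_{H'}(c)$; instead it tracks the \emph{average} color-degree $\overline{\deg}_{\mathcal{H}'}(v)$ (following Jamall, Pettie--Su, and Alon--Assadi), proves its expectation bound using $K_{1,s,t}$-freeness only to control $|N_H(c)\cap N_H(c')|$ for neighbors $c'$ of $c$ (the ``bad colors'' estimate via K\H{o}v\'ari--S\'os--Tur\'an), establishes concentration via an exceptional-outcomes Talagrand inequality combined with a random partitioning of the edge set, and then truncates: colors whose residual degree exceeds twice the target are discarded. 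The factor $4$ arises precisely there --- a factor $2$ because degrees may be up to twice the average, and another factor $2$ because the truncation may halve the lists --- which also forces the more delicate iteration invariant relating $\overline{\deg}_{\mathcal{H}}(v)$ to $|L(v)|$ (list sizes at different vertices drift apart), rather than the single ratio invariant $|L'(v)| \geq (4+\eps/2)d'(v)/\log d'(v)$ you propose to maintain. Without either the averaging-plus-truncation mechanism or a genuinely new way to tame the non-adjacent apex correlations, the one-step analysis in your plan does not go through.
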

    
    By treating $s$ and $t$ as constants, we obtain the following immediate corollary:
    
    \begin{corl}\label{mainCorollary}
        For every $\epsilon > 0$ and an almost bipartite graph $F$, there is $d_0 \in \N$ such that the following holds. Let $d \geq d_0$. Suppose $\mathcal{H} = (L,H)$ is a DP-cover of $G$ such that:
        \begin{enumerate}[label=\ep{\normalfont\roman*}]
            \item $H$ is $F$-free,
            \item $\Delta(H) \leq d$, and
            \item $|L(v)| \geq (4+\epsilon)d/\log d$ for all $v \in V(G)$.
        \end{enumerate}
        Then $G$ has a proper $\mathcal{H}$-coloring.
    \end{corl}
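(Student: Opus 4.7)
The plan is to deduce Corollary~\ref{mainCorollary} directly from Theorem~\ref{mainTheorem} by specializing the parameters. The key observation, already recorded in the introduction, is the structural characterization of almost bipartite graphs: a graph $F$ is almost bipartite if and only if $F$ is a subgraph of $K_{1,t,t}$ for some $t \in \N$. Given the fixed almost bipartite graph $F$ in the statement, I would first fix such a $t = t(F) \in \N$. Since $F \subseteq K_{1,t,t}$, any graph containing a copy of $K_{1,t,t}$ also contains a copy of $F$; equivalently, any $F$-free graph is automatically $K_{1,t,t}$-free. In particular, hypothesis~(i) of the corollary ensures that $H$ is $K_{1,t,t}$-free.

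Next I would invoke Theorem~\ref{mainTheorem} with the parameter choice $s \defeq t$ (the second $t$ in the theorem matches the one fixed above). Let $\alpha > 0$ be the absolute constant supplied by Theorem~\ref{mainTheorem}, and let $d_0'(\eps)$ be the corresponding threshold. Because $F$, and hence $t$, is fixed while $\eps > 0$ is fixed as well, the required inequalities
\[
    s \,=\, t \,\leq\, d^{\alpha\eps} \qquad \text{and} \qquad t \,\leq\, \frac{\alpha\eps \log d}{\log \log d}
\]
both hold for all sufficiently large $d$. I would therefore set $d_0 \defeq \max\{d_0'(\eps),\, d_1\}$, where $d_1 = d_1(F, \eps)$ is chosen large enough to guarantee the two displayed inequalities. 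With this choice of $d_0$, all three hypotheses of Theorem~\ref{mainTheorem} are satisfied, and the theorem yields a proper $\mathcal{H}$-coloring of $G$, as desired.

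There is essentially no obstacle in this argument: the corollary is an immediate specialization of Theorem~\ref{mainTheorem} obtained by absorbing $s$ and $t$ into the constant $d_0$. All of the real work—the probabilistic and algorithmic content—lives in the proof of Theorem~\ref{mainTheorem}, whereas the deduction of Corollary~\ref{mainCorollary} requires only the elementary embedding $F \subseteq K_{1,t,t}$ and the observation that a fixed constant is eventually dominated by both $d^{\alpha\eps}$ and $\alpha\eps\log d/\log\log d$ as $d \to \infty$.
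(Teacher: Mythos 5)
Your proposal is correct and matches the paper's own deduction: the paper obtains Corollary~\ref{mainCorollary} from Theorem~\ref{mainTheorem} precisely by treating $s$ and $t$ as constants (using $F \subseteq K_{1,t,t}$, so $F$-free implies $K_{1,t,t}$-free) and absorbing the conditions on $s$ and $t$ into the choice of $d_0$.
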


    Note that if $F$ is an almost bipartite graph and $G$ is $F$-free, then $G$ is $K_{1,t,t}$-free for $t \defeq |V(F)|$. Since every two nonadjacent vertices in $K_{1,t,t}$ have a common neighbor, every DP-cover of $G$ is also $K_{1,t,t}$-free by Proposition~\ref{prop: G F free implies H F free}. Therefore, applying Corollary~\ref{mainCorollary} with $d \defeq \Delta(G)$ and $K_{1,t,t}$ in place of $F$ yields a version of Theorem~\ref{theo:col} for DP-chromatic numbers:
    \begin{corl}\label{corl:DP}
        For every $\epsilon > 0$ and an almost bipartite graph $F$, there is $\Delta_0 \in \N$ such that every $F$-free graph $G$ with maximum degree $\Delta \geq \Delta_0$ satisfies $\chi_{DP}(G) \leq (4 + \epsilon)\Delta/\log \Delta$. 
    \end{corl}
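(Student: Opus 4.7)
The plan is to deduce Corollary~\ref{corl:DP} directly from Corollary~\ref{mainCorollary} by taking $d \defeq \Delta(G)$, as suggested in the sentence preceding the statement. Given an almost bipartite graph $F$ and $\eps > 0$, fix $t \in \N$ with $F \subseteq K_{1,t,t}$ and set $\Delta_0$ equal to the threshold $d_0$ supplied by Corollary~\ref{mainCorollary} for this choice of $F$ and $\eps$. For any $F$-free graph $G$ with $\Delta \defeq \Delta(G) \geq \Delta_0$ and any DP-cover $\mathcal{H} = (L, H)$ of $G$ with $|L(v)| \geq (4+\eps)\Delta/\log\Delta$ for every $v$, I will exhibit a proper $\mathcal{H}$-coloring by verifying the three hypotheses of Corollary~\ref{mainCorollary}.

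Hypothesis (iii) is the list-size assumption itself. For (ii), $\Delta(H) \leq \Delta$ is immediate from the matching structure of $\mathcal{H}$: each color $c \in L(v)$ has neighbors in $H$ only inside $\bigcup_{u \sim v} L(u)$, with at most one neighbor per $L(u)$. The real content lies in hypothesis (i), that $H$ is $F$-free; here I will argue the stronger statement that $H$ is $K_{1,t,t}$-free, which suffices because $F \subseteq K_{1,t,t}$.

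Suppose for contradiction that $H$ contains a copy of $K_{1,t,t}$ with parts $\{a\}$, $\{b_1, \ldots, b_t\}$, $\{c_1, \ldots, c_t\}$, and project it into $G$ via $L^{-1}$. Two vertices of this copy that are adjacent in $H$ must project to distinct vertices of $G$, since each $L(v)$ is independent in $H$. Two non-adjacent vertices $x$, $y$ of the copy that share a common neighbor $z$ also project distinctly: otherwise $x$, $y \in L(L^{-1}(x))$, and the edges between $L(L^{-1}(x))$ and $L(L^{-1}(z))$ would contain both $xz$ and $yz$, violating the matching property. In $K_{1,t,t}$ every pair of distinct vertices either is adjacent or shares the universal vertex $a$ as a common neighbor, so $L^{-1}$ restricts to an embedding of $K_{1,t,t}$ into $G$, contradicting $F$-freeness (and hence $K_{1,t,t}$-freeness) of $G$. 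Corollary~\ref{mainCorollary} then supplies a proper $\mathcal{H}$-coloring of $G$, and since $\mathcal{H}$ was arbitrary, $\chi_{DP}(G) \leq (4+\eps)\Delta/\log\Delta$.

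There is no serious obstacle here—the derivation is essentially bookkeeping—and the only piece of genuine content is the projection argument above, where the almost-bipartite hypothesis is used precisely to route everything through the ``universal container'' $K_{1,t,t}$, whose every pair of distinct vertices has a common neighbor so that the matching property of $\mathcal{H}$ prevents any collapse under $L^{-1}$.
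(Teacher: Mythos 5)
Your derivation follows the paper's own (essentially one-line) route: set $d = \Delta(G)$ and invoke Corollary~\ref{mainCorollary}, with the projection argument via $L^{-1}$ supplying the step the paper leaves implicit, and your verification of $\Delta(H) \leq \Delta(G)$ and of the injectivity of the projection (adjacent colors lie in distinct lists; non-adjacent colors of the copy with a common neighbor cannot collapse by the matching property) is correct.

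There is, however, one logical slip in how you route the argument through hypothesis (i) of Corollary~\ref{mainCorollary}. You claim that $K_{1,t,t}$-freeness of $H$ is a \emph{stronger} statement than $F$-freeness of $H$; the implication goes the other way. Since $F \subseteq K_{1,t,t}$, any copy of $K_{1,t,t}$ contains a copy of $F$, so $F$-free $\Rightarrow$ $K_{1,t,t}$-free, not conversely. In fact $H$ need not be $F$-free at all when $G$ is: take $F = 2K_2 \subseteq K_{1,2,2}$ and $G$ a star $K_{1,\Delta}$ with center $u$; a cover in which two colors of $L(u)$ are matched into the lists of two different leaves produces two disjoint edges in $H$. (This also shows that for non-adjacent pairs of $F$ with no common neighbor the collapse you rule out for $K_{1,t,t}$ can genuinely occur, which is exactly why your argument must be run for $K_{1,t,t}$ rather than for $F$ itself.) The fix is immediate and changes nothing of substance: since $K_{1,t,t}$ is itself almost bipartite, apply Corollary~\ref{mainCorollary} with $K_{1,t,t}$ in place of $F$ (taking $\Delta_0$ to be the threshold $d_0$ for the pair $(K_{1,t,t}, \eps)$, or invoking Theorem~\ref{mainTheorem} directly with $s = t$ constant); your projection argument shows $H$ is $K_{1,t,t}$-free, which is precisely the hypothesis needed, and the conclusion follows as you state.
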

    
    It would be interesting to know when the constant factor in the above results can be reduced from $4 + o(1)$ to $1 + o(1)$. For example, the following conjecture is open:
    
    \begin{conj}[{Cambie--Kang \cite[Conjecture 4]{CK}}]\label{conj:CK}
        For every $\epsilon > 0$, there is $d_0 \in \N$ such that the following holds. Let $G$ be a graph and let $\mathcal{H} = (L,H)$ be a DP-cover of $G$. Suppose that $H$ has maximum degree $d \geq d_0$ and $|L(v)| \geq (1+\epsilon)d/\log d$ for all $v \in V(G)$. Assume also that:
        \begin{itemize}
            \item \ep{weak version} $G$ is triangle-free, or
            \item \ep{strong version} $H$ is triangle-free.
        \end{itemize}
        Then $G$ admits a proper $\mathcal{H}$-coloring.
    \end{conj}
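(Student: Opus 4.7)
The plan is to adapt the iterative nibble and Lovász Local Lemma framework used to prove Theorem~\ref{mainTheorem} (and originally developed in~\cite{BipartiteNibble}), replacing the ``almost bipartite'' combinatorial input with a triangle-free one so as to push the constant from $4+\eps$ down to $1+\eps$. I will target the strong version of Conjecture~\ref{conj:CK}, which already implies the weak version: if $G$ is triangle-free then no three vertices of $G$ are pairwise adjacent, so the matchings between $L(u)$, $L(v)$, $L(w)$ cannot produce a triangle in $H$, making $H$ triangle-free as well. After a standard padding reduction, assume $H$ is $d$-regular and $|L(v)| = \ell \defeq \lceil(1+\eps)d/\log d\rceil$ for every $v \in V(G)$.

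Run a semirandom coloring process that maintains, at each uncolored vertex $v$, a current list $L^\star(v) \subseteq L(v)$ together with, for every $c \in L^\star(v)$, an ``uncolored correspondent degree'' $d^\star(v,c)$ counting those $c' \sim c$ whose underlying vertex $L^{-1}(c')$ is still uncolored and whose own availability has not been destroyed. The invariant to preserve inductively is
\[
    \ell^\star(v) \;\geq\; (1+\eps)\,\frac{d^\star(v)}{\log d^\star(v)}, \qquad \text{where} \qquad d^\star(v) \defeq \max_{c \in L^\star(v)} d^\star(v,c).
\]
One round activates each uncolored $v$ independently with probability $p = \Theta(\log d / d)$, tentatively colors it with a uniform element of $L^\star(v)$, and commits the assignment when no $H$-conflict arises. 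The decisive use of triangle-freeness of $H$ is that for every $c$, the correspondent set $N_H(c)$ is independent in $H$, so the events ``$c' \in N_H(c)$ is committed'' are driven by essentially disjoint pieces of randomness and have negligible pairwise covariances. Talagrand-type concentration followed by the \LLL{} applied to the bad events ``$v$ violates an invariant'' then close one round; iterating until the residual maximum degree drops below $d^{1/2}$ and finishing with a standard \LLL{} argument on the residual instance yields the desired proper $\mathcal{H}$-coloring.

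The main obstacle is the gap from $4+\eps$ (Theorem~\ref{mainTheorem}) to $1+\eps$. The present paper loses a multiplicative factor of up to four in the one-round analysis because of correlated ``double-blocking'' of correspondents within a single list: in a generic DP-cover, the shrinkage rates of $\ell^\star(v)$ and $d^\star(v,c)$ differ by a constant factor that the almost-bipartite hypothesis alone cannot eliminate. For list-coloring, Molloy's~\cite{Molloy} entropy-compression argument exploits the rigid ``same color matches same color'' structure to equalize these rates and give the optimal $1+o(1)$ constant; but that rigidity is precisely what DP-coloring sacrifices. Producing a DP-analogue of Molloy's balancing step --- one that uses triangle-freeness of $H$ \emph{globally} to match the shrinkage rates of $\ell^\star$ and $d^\star$ without incurring a constant-factor loss --- is the technical innovation required, and is exactly where current techniques (including those of this paper) stop short. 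A promising but unproven route is to replace the simple uniform activation with a coupling-based "equalizer" that preferentially retains colors whose neighborhoods are already nearly independent in $H$, a condition that triangle-freeness makes nontrivially frequent.
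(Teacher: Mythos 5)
There is a fundamental problem here: the statement you were asked to prove is Conjecture~\ref{conj:CK}, which is an \emph{open} conjecture of Cambie and Kang. The paper does not prove it and says so explicitly; it only observes that Corollary~\ref{mainCorollary} gives the strong version with the constant $1+\eps$ replaced by $4+\eps$, and that reducing the constant to $1+o(1)$ ``remains a tantalizing open problem, even in the list-coloring framework.'' Your proposal, read closely, is consistent with this but is therefore not a proof: you yourself concede in the final paragraph that the balancing step needed to close the gap from $4+\eps$ to $1+\eps$ is ``exactly where current techniques (including those of this paper) stop short,'' and that your suggested remedy is ``promising but unproven.'' A proof attempt whose decisive step is acknowledged to be missing is a research plan, not an argument, so the conjecture remains unestablished by what you wrote.

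Beyond that, the one substantive probabilistic claim you make in support of the plan is incorrect. Triangle-freeness of $H$ makes each neighborhood $N_H(c)$ an independent set, but it does \emph{not} make the events ``$c' \in N_H(c)$ survives the round'' nearly uncorrelated: whether $c'$ survives depends on the activation status of the colors in $N_H(c')$, and two colors $c', c'' \in N_H(c)$ may share many common neighbors, since triangle-freeness does not exclude $K_{2,t}$ (in particular $C_4$) subgraphs. These shared neighbors create strong positive correlations among the survival events, which is exactly the obstruction the paper identifies: the color-degree of $c$ after one round need not concentrate around its mean, and this failure disappears only under a $K_{s,t}$-freeness assumption on neighborhoods (see the footnote in \S\ref{subsec:sketch}), which is why the paper works with \emph{average} color-degrees and accepts the resulting factor of $4$. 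Consequently the invariant you propose, $\ell^\star(v) \geq (1+\eps)\,d^\star(v)/\log d^\star(v)$ with $d^\star(v)$ defined as a \emph{maximum} over $c \in L^\star(v)$, is precisely the kind of condition that cannot be maintained by the Talagrand-plus-LLL round analysis you describe. Your reduction of the weak version to the strong version (triangle-freeness of $G$ forces triangle-freeness of $H$, since a triangle in $H$ projects to a triangle in $G$) is correct, but the remainder of the argument does not establish the conjecture, and no argument in the paper does either.
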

    
    Conjecture~\ref{conj:CK} holds if $d$ is taken to be the maximum degree of $G$ rather than of $H$ \ep{the weak version is proved in \cite{JMTheorem} by the second author and the strong one in \cite{Prz} by Przyby\l{}o}. Cambie and Kang proved the conclusion of Conjecture~\ref{conj:CK} when $G$ is not just triangle-free but bipartite \cite[Corollary 3]{CK}. Amini and Reed \cite{AminiReed} and, independently, Alon and Assadi \cite[Proposition 3.2]{Palette} verified the weak version of Conjecture~\ref{conj:CK} in the list-coloring setting, but with $1 + o(1)$ replaced by a larger constant \ep{$8$ in \cite{Palette}}. Since $K_3$ is almost bipartite, Corollary~\ref{mainCorollary} implies the strong version of Conjecture \ref{conj:CK} with $1 + o(1)$ replaced with $4+o(1)$. Reducing the constant factor to $1 + o(1)$ remains a tantalizing open problem, even in the list-coloring framework.

    Another direction in which it may be possible to generalize our results is to further relax the conditions in Theorem~\ref{mainTheorem} by replacing the maximum degree $\Delta(H)$ of $H$ by the following parameter, called the \emph{maximum average color-degree} of $\mathcal{H}$:
    \[
        \overline{\Delta}(\mathcal{H}) \,\defeq\, \max_{v \in V(G)} \frac{1}{|L(v)|}\sum_{c \in L(v)}\deg_{H}(c).
    \]
    This quantity has been studied in the context of list- and DP-coloring by Kang and Kelly \cite{KangKelly} and Glock and Sudakov \cite{GlockSudakov}, and implicitly appeared in the earlier work of Molloy and Thron \cite{Adaptive} and Dvo\v{r}\'ak, Esperet, Kang, and Ozeki \cite{SingleConflict} (see \cite[\S1.1]{KangKelly}). Moreover, as we explain in \S\ref{subsec:sketch}, average color-degree plays a crucial role in our proof approach. Unfortunately, our technique still requires an upper bound on $\Delta(H)$, not just $\overline{\Delta}(\mathcal{H})$; it remains an interesting open problem to determine whether the dependence on $\Delta(H)$ can be eliminated without any loss in the constant factor.

    \subsection{Algorithmic considerations}\label{subsec:alg}
    
    We establish our main results by iteratively applying the \hyperref[LLL]{Lov\'asz Local Lemma}. Thanks to the algorithmic version of the Lov\'asz Local Lemma developed by Moser and Tardos \cite{MT}, it is routine to verify that our arguments yield efficient randomized coloring algorithms. For example, given an almost bipartite graph $F$, we obtain a randomized algorithm for $(4+o(1))\Delta/\log\Delta$-coloring $F$-free graphs $G$ of maximum degree $\Delta$ that runs in time $\mathrm{poly}(\Delta)n$.
    
    Our work also has some relevance for the theory of \emph{sublinear algorithms}, i.e., algorithms whose computational resources are substantially smaller than their input size. In \cite{Sparse}, Assadi, Chen, and Khanna introduced a general approach for constructing such algorithms based on a technique they called \emph{palette sparsification}. The idea of their method, in a nutshell, is as follows. Suppose we are trying to properly color a graph $G$ using $\ell$ colors, but, due to limited computational resources, we cannot keep track of all the edges of $G$ at once. Let us independently sample, for each vertex $v \in V(G)$, a random set $S(v)$ of colors of size $|S(v)| = s \ll \ell$. Define
    \[
        E_\mathrm{conflict} \,\defeq\, \set{uv \in E(G) \,:\, S(u) \cap S(v) \neq \0}.
    \]
    If we color $G$ by assigning to every vertex $v \in V(G)$ a color from the corresponding set $S(v)$, then only the edges in $E_\mathrm{conflict}$ may become monochromatic, so instead of working with the entire edge set of $G$, we only need to keep track of the edges in the \ep{potentially much smaller} set $E_\mathrm{conflict}$. For this strategy to succeed, we must ensure that, with high probability, it is indeed possible to properly color $G$ using the colors from the sets $S(v)$. Such a result is referred to as a \emph{palette sparsification theorem}. Assadi, Chen, and Khanna \cite{Sparse} demonstrated that palette sparsification theorems can be used in a black-box manner to develop algorithms in a variety of classical models of sublinear computation, including \ep{dynamic} graph streaming algorithms, sublinear time/query algorithms, and massively parallel computation \ep{MPC} algorithms. 
    
    These algorithmic applications motivate the search for new palette sparsification results. In their seminal work \cite{Sparse}, Assadi, Chen, and Khanna proved a palette sparsification theorem for $(\Delta + 1)$-colorings. More precisely, they showed that if $G$ is an $n$-vertex graph of maximum degree $\Delta$, then, with high probability, after sampling $\Theta(\log n)$ colors per vertex independently from a set of $\Delta + 1$ colors, it is possible to find a proper coloring of $G$ from the sampled lists. In \cite{KahnKenney}, Kahn and Kenney tightened this result by showing that sampling $(1+o(1))\log n$ colors per vertex is enough (which is best possible). The literature on the subject is quite extensive, especially since notions closely related to palette sparsification have also been studied independently of their algorithmic applications, with a particular attention in recent years directed to the case of list-\emph{edge}-coloring from random lists. For a sample of results concerning palette sparsification from the algorithmic perspective, see \cite{Sparsification1, Palette, Sparsification2, Sparsification3}; for a more purely probabilistic focus, see, e.g., \cite{KN1,KN2,C1,C2,C3,CH,C4,rand4,rand3,rand1,rand2}.
    
    The paper \cite{Palette} by Alon and Assadi is particularly relevant to our present work. Alongside several other palette sparsification theorems, it includes the following result for $O(\Delta/\log\Delta)$-colorings of triangle-free graphs:
    
    \begin{theo}[{Alon--Assadi \cite[Theorem 2]{Palette}}]\label{sparseAA}
        There exists a constant $C > 0$ such that, for all $0 < \gamma < 1$, there is $\Delta_0 \in \N$ with the following property. Let $G$ be a triangle-free $n$-vertex graph of maximum degree $\Delta \geq \Delta_0$. Let $\ell \geq s$ be positive integers such that
        \[
            \ell \,=\, \frac{9}{\gamma} \,\frac{\Delta}{\log \Delta} \qquad \text{and} \qquad s \,\geq\, \Delta^\gamma + C\sqrt{\log n}.
        \]
        Independently for each $v \in V(G)$, pick a uniformly random subset $S(v) \subseteq [\ell]$ of size $s$. Then, with probability at least $1 - 1/n$, $G$ has a proper coloring $\phi$ with $\phi(v) \in S(v)$ for all $v \in V(G)$.
    \end{theo}
    
    Corollary~\ref{mainCorollary} yields an extension of Theorem~\ref{sparseAA} to $F$-free graphs for any almost bipartite graph $F$; moreover, this extension holds in the DP-coloring setting:
    
    \begin{corl}\label{corl:sparse}
        For every $0 < \epsilon < 1$ and an almost bipartite graph $F$, there exists $C > 0$ such that, for all $0 < \gamma < 1$, there is $d_0 \in \N$ with the following property. Let $d \geq d_0$, $n \in \N$ and suppose that positive integers $\ell \geq s$ satisfy
        \[
            \ell \,=\, \frac{4+\epsilon}{\gamma} \,\frac{d}{\log d} \qquad \text{and} \qquad s \,\geq\, d^\gamma + C\sqrt{\log n}.
        \]
        Let $\mathcal{H} = (L,H)$ be a DP-cover of an $n$-vertex graph $G$ such that:
        \begin{enumerate}[label=\ep{\normalfont\roman*}]
            \item $H$ is $F$-free,
            \item $\Delta(H) \leq d$, and
            \item $|L(v)| = \ell$ for all $v \in V(G)$.
        \end{enumerate}
        Independently for each $v \in V(G)$, pick a uniformly random subset $S(v) \subseteq L(v)$ of size $s$. Then, with probability at least $1 - 1/n$, $G$ has a proper $\mathcal{H}$-coloring $\phi$ with $\phi(v) \in S(v)$ for all $v \in V(G)$. 
    \end{corl}
    
    The proof of Corollary~\ref{corl:sparse} is essentially the same as the proof of Theorem~\ref{sparseAA} in \cite[\S3.2]{Palette}, but with Corollary~\ref{mainCorollary} replacing \cite[Proposition 3.2]{Palette}. For completeness, we present it in \S\ref{section:sparse}.

    \subsection{Proof strategy}\label{subsec:sketch}

    Let us now say a few words about the proof of Theorem~\ref{mainTheorem}. Perhaps we should start by noting that, while we perform our arguments in the general setting of DP-coloring, our results are new already for ordinary graph coloring (and, indeed, for independence numbers---see Corollary~\ref{corl:alpha}). That being said, even to just prove an upper bound on the usual chromatic number using our techniques, we must consider the more general framework of list-coloring and work with color-degree constraints. This phenomenon is common in the field and already occurs in the seminal papers of Kim \cite{Kim95} and Johansson \cite{Joh_triangle,Joh_sparse}. Generalizing from list-coloring to DP-coloring does add certain extra complications, which we highlight below, but the bulk of the proof is essentially the same in both settings, so the reader primarily interested in list-coloring may focus on that case and simply ignore the few places where features particular to DP-coloring become relevant.

    Now, let $G$ be a graph and let $\mathcal{H} = (L,H)$ be a DP-cover of $G$ satisfying the assumptions of Theorem~\ref{mainTheorem}; so, $H$ is $K_{1,s,t}$-free and has maximum degree $d$, while $|L(v)| \geq (4+\epsilon)d/\log d \eqqcolon \ell$ for all $v \in V(G)$. In order to find a proper $\mathcal{H}$-coloring of $G$, we employ a variant of the so-called ``R\"odl Nibble'' method, in which we randomly color a small portion of $V(G)$ and then iteratively repeat the same procedure with the vertices that remain uncolored; see \cite{Nibble} for a recent survey of applications of this method to \ep{hyper}graph coloring. In the following paragraphs we highlight some of the key elements of our construction. For the sake of clarity, some of the notation in this overview is slightly different from the one used in the actual proof, but only in minor and technical ways.

    At the heart of our argument is a certain randomized procedure, described in detail in \S\ref{sectionOutline}. This procedure is mostly standard; for example, it is very similar to the one employed in \cite[\S13]{MolloyReed} by Molloy and Reed as well as the one used by Alon and Assadi in \cite[Appendix A]{Palette} \ep{although our procedure is somewhat different since it is applied in the DP-coloring framework}. The output of this procedure is a proper partial $\mathcal{H}$-coloring $\phi$ of $G$. Our hope is to show that, with positive probability, $\phi$ has some desirable properties that allow the coloring to be extended to the entire graph $G$. To explain what these properties are, we need to introduce some notation. Let $G'$ be the subgraph of $G$ induced by the uncolored vertices. Consider an uncolored vertex $v \in V(G')$. Since some of the neighbors of $v$ may already be colored, not every color from $L(v)$ may be available to $v$. Specifically, if $v$ has a neighbor $u \in \dom(\phi)$ with $\phi(u) \sim c \in L(v)$, then $c$ cannot be used for $v$. Thus, we are led to define \[L_\phi(v) \,\defeq\, \set{c \in L(v) \,:\, N_H(c) \cap \im(\phi) = \0}.\] That is, $L_\phi(v)$ is the set of all colors that can be used for $v$ without creating any conflicts with the already colored vertices. For technical reasons, our procedure is ``wasteful,'' in the sense that it may remove more colors from the lists than strictly necessary. Namely, the procedure generates, for each uncolored vertex $v \in V(G')$, a certain subset $L'(v) \subseteq L_\phi(v)$, and we are only allowed to use the colors from $L'(v)$ to color $v$ later on. \ep{See \cite[\S12.2]{MolloyReed} for a textbook discussion of the utility of such ``wastefulness.''}
    Now we let $\mathcal{H}' = (L', H')$ be the DP-cover of $G'$ induced by these restricted lists. Our problem is to show that, with positive probability, the graph $G'$ is $\mathcal{H}'$-colorable.
    
    Let $\ell' \defeq \min \set{|L'(v)| \,:\, v \in V(G')}$ and let $d'$ be the maximum degree of $H'$. Ideally, we would show that, with positive probability, the ratio $d'/\ell'$ is noticeably smaller than $d/\ell$. We would then continue applying our procedure iteratively until the ratio of the maximum degree of the cover graph to the minimum list size drops below a small constant, after which the coloring can be completed using standard tools \ep{such as Proposition~\ref{finalBlow} below}. To achieve this goal, it is natural to start by estimating $\E[|L'(v)|]$ for every vertex $v \in V(G)$ and $\E[\deg_{H'}(c)]$ for every color $c \in V(H)$. As a result of this calculation, it turns out that
    \[
        \frac{\E[\deg_{H'}(c)]}{\E[|L'(v)|]} \,\approx\, \uncolor \, \frac{d}{\ell},
    \]
    where $\uncolor$ is a certain factor strictly less than $1$ \ep{it is defined precisely in \S\ref{sectionOutline}; the exact formula is not important for this informal overview}. The usual next step would be to argue that $|L'(v)| \approx \E[|L'(v)|]$ and $\deg_{H'}(c) \approx \E[\deg_{H'}(c)]$ with high probability. A standard \LLL-based argument would then imply that, with positive probability,
    \[
        \frac{d'}{\ell'} \,\approx\, \uncolor \, \frac{d}{\ell},
    \]
    i.e., $d'/\ell'$ is indeed less than $d/\ell$ by some factor. Unfortunately, this approach fails, because in general $\deg_{H'}(c)$ may not be concentrated around its expected value. Roughly speaking, the problem is that the neighbors of $c$ in $H$ may have many common neighbors.\footnote{This issue does not arise if we assume that $H$ is $K_{s,t}$-free, which is why $K_{s,t}$-free graphs satisfy the version of Theorem~\ref{mainTheorem} with $1$ in place of $4$ in the constant factor \cite{BipartiteNibble}.} As a result, the random events $\set{c' \in V(H')}$ for $c' \in N_H(c)$ may be strongly correlated with each other, which drives up the variance of $\deg_{H'}(c)$. To circumvent this difficulty, we employ an idea introduced by Jamall \cite{Jamall} and developed further by Pettie and Su \cite{PS15} and Alon and Assadi \cite{Palette}. Namely, instead of bounding the \emph{maximum} degree of $H'$ directly, we focus on the \emph{average} value
    \[
        \overline{\deg}_{\mathcal{H}'}(v) \,\defeq\, \frac{1}{|L'(v)|}\sum_{c \in L'(v)}\deg_{H'}(c)
    \]
    for each $v \in V(G')$, called the \emph{average color-degree} of $v$ in $\mathcal{H}'$. (Recall that we briefly mentioned average color-degree at the end of \S\ref{subsec:DP}.) In order to obtain an upper bound on $d'$ in terms of $\overline{\deg}_{\mathcal{H}'}(v)$, we simply remove from $L'(v)$ every color $c$ with $\deg_{H'}(c) > 2\overline{\deg}_{\mathcal{H}'}(v)$. In this way we remove at most half of the colors in $L'(v)$. This is how we gain a factor of $4$ in the lower bound on $\ell$: roughly, a factor of $2$ is added because the degree of a color is allowed to go up to twice the average value, and another factor of $2$ is added because the list sizes can shrink by half. 
    
    Working with the random variable $\overline{\deg}_{\mathcal{H}'}(v)$ instead of dealing with $\deg_{H'}(c)$ for each color $c \in L(v)$ separately leads to a number of technical challenges, which we now address.
    
    \begin{itemize}
        \item From the above discussion, it may seem that we may lose a factor of $4$ in the lower bound on $\ell$ \emph{at every iteration} of the procedure, since every iteration ends with the removal of the high-degree colors. Thankfully, there is a way to get around this issue. The key is to notice that the number of colors removed from $L'(v)$ due to having high degree in $H'$ doesn't have to be equal to $|L'(v)|/2$; rather, it can range anywhere between $0$ and $|L'(v)|/2$. Moreover, the more colors we remove, the smaller the average degree of the remaining colors is. For example, in the extreme case when we actually remove half the colors from $L'(v)$, all the remaining colors must have degree $0$---and thus we don't need to worry about them anymore. To make use of this observation, throughout the iterative process, we maintain the condition that the average color-degree of $v$ is bounded above by a certain increasing function of $|L(v)|$ \ep{i.e., the smaller $|L(v)|$ is, the smaller the average degree of the colors in $L(v)$ becomes}. A careful calculation presented in \S\ref{sectionIterations} shows that the balance between list sizes and average color-degrees results in the loss of only a single factor of $4$ throughout the entire process. Note that, as a result, we cannot guarantee that the list sizes for different vertices will remain equal or even close to each other, which adds certain technical difficulties to the analysis.

        \item\label{pageref for item} The assumption that $H$ is $K_{1,s,t}$-free is used to bound the expectation of $\overline{\deg}_{\mathcal{H}'}(v)$. Roughly speaking, since $H$ is $K_{1,s,t}$-free, the neighborhood of any color $c \in L(v)$ does not induce too many edges in $H$. From this it is possible to deduce that the event $\set{c \in L'(v)}$ and the random variable $\deg_{H'}(c)$ are, in some sense, approximately independent, which helps to estimate the expected contribution of $c$ to the average color-degree of $v$ in $\mathcal{H}'$. The arguments involved here are rather delicate \ep{which is somewhat surprising, since usually computing the expected values is the ``easy'' part of a probabilistic analysis} and constitute a significant portion of the proof (see \S\S\ref{sectionProofOfIterationTheorem} and \ref{sectionProofofExpectation}).
        
        While the argument is already subtle in the list-coloring setting, the DP-coloring framework adds certain further complications.
        Namely, it is possible that distinct colors $c$, $c' \in L(v)$ and $c'' \in N_H(c)$ satisfy $N_H(c') \cap N_H(c'') \neq \0$ (this situation cannot occur in the list-coloring setting).
        This possibility makes the events $\set{|L'(v)| \geq \ell'}$ and $\set{c, c'' \in V(H')}$ more strongly correlated.
        Fortunately, we can use tools such as \hyperref[harris]{Harris's Inequality} to control the influence of this correlation on our parameters (for the details, see Lemma~\ref{keptUncolorExpectation}).
        
        \item 
        \hypertarget{partitioning}{} As mentioned before, most of our arguments remain essentially the same or are only somewhat simplified by working with list-coloring instead of DP-coloring. The one major exception to this is the proof that the random variable $\overline{\deg}_{\mathcal{H}'}(v)$ is highly concentrated around its mean, which is significantly more difficult in the DP-coloring context.
        This is because, in the list-coloring setting, 
        the contributions of the individual colors $c \in L(v)$ to $\overline{\deg}_{\mathcal{H}'}(v)$ are independent from each other, while in the general DP-coloring framework,
        two colors from $L(v)$ may be joined by a path in $H$, creating a dependence between them. Among the tools that we use to tackle this issue and control $\overline{\deg}_{\mathcal{H}'}(v)$ is a \emph{random partitioning technique}. Roughly speaking, in order to prove that some random variable $X$ is concentrated, we randomly represent $X$ as a sum $X = X_1 + \cdots + X_k$ and then show that each piece $X_i$ is highly concentrated around its own expected value. We have employed this technique earlier in \cite{BipartiteNibble}, and expect that it will have further applications in graph coloring. This part of the argument is presented in \S\ref{sectionProofofConcentration}.
    \end{itemize}
    
    \subsubsection*{A road map}
    
    To finish the introduction, let us give a brief outline of the remainder of the paper. In \S\ref{sec:prelim} we collect some preliminary facts and tools. Next, in \S\ref{sectionOutline}, we describe our randomized coloring procedure. In \S\ref{sectionProofOfIterationTheorem} we explain the main steps of the analysis of the procedure. Then, in \S\ref{sectionProofofExpectation} and \S\ref{sectionProofofConcentration}, we study respectively the expected value and the concentration of the average color-degrees of the uncolored vertices. In \S\ref{sectionIterations}, we put the iterative process together and complete the proof of Theorem~\ref{mainTheorem}. Finally, in \S\ref{section:sparse}, we prove Corollary~\ref{corl:sparse}.

\section{Preliminaries}\label{sec:prelim}

    In this section we outline the main background facts that will be used in our arguments. We start with the symmetric version of the Lov\'asz Local Lemma.

\begin{theo}[{Lov\'asz Local Lemma; \cite[\S4]{MolloyReed}}]\label{LLL}
    Let $A_1$, $A_2$, \ldots, $A_n$ be events in a probability space. Suppose there exists $p \in [0, 1)$ such that for all $1 \leq i \leq n$ we have $\P[A_i] \leq p$. Further suppose that each $A_i$ is mutually independent from all but at most $d_{LLL}$ other events $A_j$, $j\neq i$ for some $d_{LLL} \in \N$. If $4pd_{LLL} \leq 1$, then with positive probability none of the events $A_1$, \ldots, $A_n$ occur.
\end{theo}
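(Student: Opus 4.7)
The plan is to prove, by induction on $|S|$, the auxiliary estimate that for every $i \in \{1, \dots, n\}$ and every $S \subseteq \{1, \dots, n\} \setminus \{i\}$ with $\P[\bigcap_{j \in S}\overline{A}_j] > 0$, one has
\[
    \P\Big[A_i \,\Big|\, \bigcap_{j \in S}\overline{A}_j\Big] \,\leq\, 2p.
\]
Once this estimate is in hand, the theorem follows immediately from the chain rule:
\[
    \P\Big[\bigcap_{i=1}^n \overline{A}_i\Big] \,=\, \prod_{i=1}^n \P\Big[\overline{A}_i \,\Big|\, \bigcap_{j<i}\overline{A}_j\Big] \,\geq\, (1 - 2p)^n \,>\, 0,
\]
since $4pd_{LLL} \leq 1$ with $d_{LLL} \geq 1$ forces $p \leq 1/4$, so $1 - 2p > 0$.

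For the base case $S = \emptyset$ the bound reduces to the hypothesis $\P[A_i] \leq p \leq 2p$. For the inductive step, I would fix $i$ and $S$, choose a set $D(i)$ of at most $d_{LLL}$ indices such that $A_i$ is mutually independent of $\{A_j : j \notin D(i) \cup \{i\}\}$, and partition $S = S_1 \sqcup S_2$ with $S_1 = S \cap D(i)$ and $S_2 = S \setminus D(i)$. The key manipulation is to expand
\[
    \P\Big[A_i \,\Big|\, \bigcap_{j \in S}\overline{A}_j\Big] \,=\, \frac{\P\big[A_i \cap \bigcap_{j \in S_1}\overline{A}_j \,\big|\, \bigcap_{j \in S_2}\overline{A}_j\big]}{\P\big[\bigcap_{j \in S_1}\overline{A}_j \,\big|\, \bigcap_{j \in S_2}\overline{A}_j\big]}.
\]
For the numerator, mutual independence of $A_i$ from the events indexed by $S_2$ yields the upper bound $\P[A_i] \leq p$. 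For the denominator, the union bound together with the inductive hypothesis applied to each $j \in S_1$ with the strictly smaller conditioning set $S_2$ gives
\[
    \P\Big[\bigcup_{j \in S_1} A_j \,\Big|\, \bigcap_{k \in S_2}\overline{A}_k\Big] \,\leq\, \sum_{j \in S_1} 2p \,\leq\, 2p\, d_{LLL} \,\leq\, 1/2,
\]
so the denominator is at least $1/2$, and the ratio is bounded by $p/(1/2) = 2p$, completing the induction.

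The main conceptual obstacle is really a matter of care with the notion of mutual independence: the phrase ``$A_i$ is mutually independent of $\{A_j : j \notin D(i) \cup \{i\}\}$'' must be interpreted as saying that $A_i$ is independent of the entire $\sigma$-algebra generated by that family, rather than merely pairwise independent from each $A_j$. Only under this reading does the numerator step $\P\big[A_i \cap E \,\big|\, F\big] = \P[A_i] \cdot \P[E \mid F]$ (with $E$ and $F$ built from events outside $D(i)\cup\{i\}$) go through. Once this interpretation is fixed, the remaining steps are routine probabilistic bookkeeping, modulo the minor technicality of handling conditioning events of probability zero, which can be dealt with by restricting to those $S$ for which all relevant conditional probabilities are well defined.
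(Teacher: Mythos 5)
Your proof is correct: it is the standard inductive argument for the symmetric Local Lemma (bounding each conditional probability $\P[A_i \mid \bigcap_{j \in S}\overline{A}_j]$ by $2p$ and then applying the chain rule), which is essentially the proof given in the cited source \cite[\S4]{MolloyReed}; the paper itself only quotes Theorem~\ref{LLL} without proof. The constants check out ($2p\,d_{LLL} \leq 1/2$ keeps the denominator at least $1/2$, and $p \leq 1/4$ makes $(1-2p)^n > 0$), and your remarks on interpreting mutual independence as independence from the whole family outside $D(i)\cup\{i\}$ and on avoiding zero-probability conditioning are exactly the right points of care.
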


Aside from the Local Lemma, we will require several concentration of measure bounds. The first of these is the Chernoff Bound for binomial random variables. We state the two-tailed version below:

\begin{theo}[{Chernoff Bound; \cite[\S5]{MolloyReed}}]\label{chernoff}
    Let $X$ be the binomial random variable on $n$ trials with each trial having probability $p$ of success. Then for any $0 \leq \xi \leq \E[X]$, we have
    \begin{align*}
        \P\Big[\big|X - \E[X]\big| \geq \xi\Big] < 2\exp{\left(-\frac{\xi^2}{3\E[X]}\right)}. 
    \end{align*}
\end{theo}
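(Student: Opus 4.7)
The plan is to invoke the classical Cram\'er--Chernoff moment generating function method and treat the two tails separately. Write $X = X_1 + \cdots + X_n$ as a sum of independent Bernoulli$(p)$ random variables so that, by independence, $\E[e^{tX}] = (1-p+pe^t)^n$ for any $t \in \R$. Using the elementary inequality $1+x \leq e^x$ yields the clean bound
\[
    \E[e^{tX}] \,\leq\, \exp\bigl(np(e^t - 1)\bigr) \,=\, \exp\bigl(\mu(e^t-1)\bigr),
\]
where $\mu \defeq \E[X] = np$. This is the only probabilistic input; the rest of the argument is analytic.

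For the upper tail, I would apply Markov's inequality with $t > 0$ to obtain
\[
    \P[X \geq \mu + \xi] \,=\, \P\bigl[e^{tX} \geq e^{t(\mu+\xi)}\bigr] \,\leq\, \exp\bigl(\mu(e^t-1) - t(\mu+\xi)\bigr).
\]
Optimizing over $t$ by choosing $t = \ln(1+\delta)$ with $\delta \defeq \xi/\mu$ collapses the right-hand side to $\exp\bigl(-\mu\bigl[(1+\delta)\ln(1+\delta)-\delta\bigr]\bigr)$. Invoking the calculus inequality $(1+\delta)\ln(1+\delta) - \delta \geq \delta^2/3$, valid for $\delta \in [0,1]$, then gives the one-sided bound $\P[X \geq \mu+\xi] \leq \exp(-\xi^2/(3\mu))$. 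The hypothesis $\xi \leq \mu$ is exactly what is needed to ensure $\delta \leq 1$. For the lower tail I would repeat the argument with $t < 0$, which even yields the slightly sharper estimate $\P[X \leq \mu-\xi] \leq \exp(-\xi^2/(2\mu)) \leq \exp(-\xi^2/(3\mu))$. Combining the two tails with a union bound produces the factor of $2$ in the statement.

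The only nontrivial step is the calculus inequality $f(\delta) \defeq (1+\delta)\ln(1+\delta) - \delta - \delta^2/3 \geq 0$ on $[0,1]$. I would verify it by noting that $f(0) = f'(0) = 0$, computing $f''(\delta) = 1/(1+\delta) - 2/3$ and $f'''(\delta) = -1/(1+\delta)^2$, and observing that $f''$ is decreasing with $f''(1) = -1/6 < 0$, so a direct integration of $f'(\delta) = \int_0^\delta f''(s)\,ds$ together with the endpoint check $f(1) = 2\ln 2 - 4/3 > 0$ suffices; alternatively one can compare Taylor series term-by-term. This is the main (and only) obstacle in the proof, and it is standard. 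The rest is a mechanical application of Markov's inequality, independence, and optimization.
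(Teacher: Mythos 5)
Your proof is correct: the Cram\'er--Chernoff moment-generating-function argument, the choice $t=\ln(1+\delta)$, and the calculus inequality $(1+\delta)\ln(1+\delta)-\delta\geq\delta^2/3$ on $[0,1]$ (which is where the hypothesis $\xi\leq\E[X]$ enters) all check out, and the two tails combined even give the stated strict inequality. The paper does not prove this statement at all --- it is quoted as a known tool from Molloy--Reed \cite[\S5]{MolloyReed} --- and your argument is precisely the standard proof found there and in other references, so there is nothing to reconcile; the only cosmetic caveat is the degenerate lower-tail case $\xi=\E[X]$, where $t=\ln(1-\delta)$ is undefined and one bounds $\P[X\leq 0]=(1-p)^n\leq e^{-\E[X]}$ directly.
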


    Additionally, we will take advantage of a version of Talagrand's inequality developed by Bruhn and Joos \cite{ExceptionalTal}. We refer to it as \emph{Exceptional Talagrand's Inequality}, since its main feature is the use of a set $\Omega^\ast$ of ``exceptional outcomes.'' 

\begin{theo}[{Exceptional Talagrand's Inequality \cite[Theorem 12]{ExceptionalTal}}]\label{ExceptionalTalagrand}
    Let $X$ be a non-negative random variable, not identically 0, which is a function of $n$ independent trials $T_1$, \ldots, $T_n$, and let $\Omega$ be the set of outcomes for these trials. Let $\Omega^* \subseteq \Omega$ be a measurable subset, which we shall refer to as the \emphd{exceptional set}. Suppose that $X$ satisfies the following for some $\gamma>1$, $s>0$:
    \begin{enumerate}[label=\ep{\normalfont{}ET\arabic*}]
        \item\label{item:ET1} For all $q>0$ and every outcome $\omega \notin \Omega^*$, there is a set $I$ of at most $s$ trials such that $X(\omega') > X(\omega) - q$ whenever $\omega' \not\in \Omega^*$ differs from $\omega$ on fewer than $q/\gamma$ of the trials in $I$.
        \item\label{item:ET2} $\P[\Omega^*] \leq M^{-2}$, where $M = \max\{\sup X, 1\}$.
    \end{enumerate}
    Then for every $\xi > 50\gamma\sqrt{s}$, we have:
    \begin{align*}
        \P\Big[\big|X - \E[X]\big| \geq \xi\Big] \leq 4\exp{\left(-\frac{\xi^2}{16\gamma^2s}\right)} + 4\P[\Omega^*].
    \end{align*}
\end{theo}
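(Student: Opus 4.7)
The plan is to adapt the standard isoperimetric proof of Talagrand's convex distance inequality to tolerate the exceptional set. The central tool is the convex distance inequality
\[
\E\!\left[\exp\!\left(\frac{f(\omega, A)^2}{4}\right)\right] \leq \frac{1}{\P[A]},
\]
valid for every measurable $A \subseteq \Omega$ under the product measure on $T_1, \ldots, T_n$, where
\[
f(\omega, A) \defeq \sup_{\alpha \in \R^n_{\geq 0},\ \|\alpha\|_2 = 1}\ \inf_{\omega' \in A}\ \sum_{i=1}^n \alpha_i \mathbf{1}[\omega_i \neq \omega'_i].
\]
I would use the hybrid Lipschitz/certifiability condition \ref{item:ET1} to translate bounds on $X$ into a geometric lower bound on this convex distance.

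Let $m$ be a median of $X$ and set $A \defeq \{\omega \in \Omega \setminus \Omega^* : X(\omega) \leq m\}$. Fix $\omega \notin \Omega^*$ with $X(\omega) \geq m + \xi$ and apply \ref{item:ET1} with $q = \xi$ to obtain a set $I$ of at most $s$ trials such that every $\omega' \notin \Omega^*$ differing from $\omega$ on fewer than $\xi/\gamma$ trials in $I$ satisfies $X(\omega') > m$, hence $\omega' \notin A$. Plugging the unit weight vector $\alpha_i \defeq \mathbf{1}[i \in I]/\sqrt{|I|}$ into the definition of $f$ then forces $f(\omega, A) \geq \xi/(\gamma\sqrt{s})$. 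Combining with the convex distance inequality, Markov's inequality, and $\P[A] \geq \P[X \leq m] - \P[\Omega^*] \geq 1/2 - M^{-2}$ gives
\[
\P\!\left[\{X \geq m + \xi\} \cap (\Omega \setminus \Omega^*)\right] \leq 2\exp\!\left(-\frac{\xi^2}{16\gamma^2 s}\right),
\]
and adding $\P[\Omega^*]$ handles the exceptional outcomes. A symmetric argument, applied with the roles of $\omega$ and the median set swapped, yields the matching lower tail.

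Finally, I would convert concentration around $m$ to concentration around $\E[X]$ by integrating the tail bound: standard manipulations yield $|\E[X] - m| \leq O(\gamma\sqrt{s}) + M \cdot \P[\Omega^*] \leq O(\gamma\sqrt{s}) + M^{-1}$, both of which are negligible compared to the slack $\xi > 50\gamma\sqrt{s}$. Consolidating constants produces the stated estimate $4\exp(-\xi^2/(16\gamma^2 s)) + 4\P[\Omega^*]$.

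The main obstacle is the interaction between the exceptional outcomes and the certifiability witness. The set $I$ produced by \ref{item:ET1} only controls $X(\omega')$ for $\omega' \notin \Omega^*$, so when reading off the lower bound on $f(\omega, A)$ one must ensure that nearby outcomes which happen to lie in $\Omega^*$ cannot \emph{sneak into} $A$ and collapse the convex distance. The hypothesis $\P[\Omega^*] \leq M^{-2}$ is precisely what makes the bookkeeping go through: the exceptional mass is quadratically smaller than the mass of $A$, so it contributes only the additive $4\P[\Omega^*]$ correction that appears in the final statement, rather than polluting the Gaussian-type term.
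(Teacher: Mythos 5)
The paper does not prove this statement at all: Theorem~\ref{ExceptionalTalagrand} is imported as a black box from Bruhn--Joos \cite{ExceptionalTal}, so there is no internal proof to compare against. Your sketch follows the standard route for such inequalities (and essentially the route of the cited source): fix a median $m$, use ET1 with $q=\xi$ to produce a certificate set $I$ of size at most $s$ showing that any non-exceptional sub-median outcome must differ from a high outcome on at least $\xi/\gamma$ trials of $I$, deduce a lower bound $\xi/(\gamma\sqrt{s})$ on Talagrand's convex distance to the (non-exceptional) sub-median set, apply the isoperimetric inequality $\E[\exp(f(\cdot,A)^2/4)]\leq 1/\P[A]$, and finally transfer from median to mean using the slack $\xi>50\gamma\sqrt{s}$ together with ET2 to control $|m-\E[X]|$ and the denominator $\P[A]\geq 1/2-\P[\Omega^*]$. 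This is correct in structure, including the point many people miss: for the lower tail the certificate must be taken at the super-median outcomes while the deviating set plays the role of $A$, which is exactly what the one-sided condition ET1 supports, and you got that right. Two minor bookkeeping remarks: your intermediate display should carry the exponent $\xi^2/(4\gamma^2 s)$ (divided by the lower bound on $\P[A]$), since the $16$ only appears after replacing $\xi$ by a constant fraction of itself in the median-to-mean shift; and the lower bound $\P[A]\geq 1/2-M^{-2}$ is only useful when $M\geq 2$, so the degenerate case of very small $M$ (where the conclusion is essentially vacuous or needs a separate trivial argument) should be dispatched explicitly. Neither issue affects the soundness of the approach, and the final constants fall out exactly as you describe because $\exp(-\xi^2/(16\gamma^2 s))$ is minuscule in the stated range of $\xi$.
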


    We will also need the following special case of the FKG inequality, dating back to Harris \cite{Harris} and Kleitman \cite{Kleitman}:

\begin{theo}[{Harris's inequality/Kleitman's Lemma \cite[Theorem 6.3.2]{AlonSpencer}}]\label{harris}
    Let $X$ be a finite set and let $S \subseteq X$ be a random subset of $X$ obtained by selecting each $x \in X$ independently with probability $p_x \in [0,1]$. If $\mathcal{A}$ \ep{resp.~$\mathcal{B}$} is an increasing \ep{resp.~decreasing} family of subsets of $X$, then \[\P[S \in \mathcal{A} \,\vert\, S \in \mathcal{B}] \,\leq\, \P[S \in \mathcal{A}].\]
\end{theo}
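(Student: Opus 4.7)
The plan is to reduce the stated conditional inequality to the standard positive-correlation form of Harris's inequality, and then prove the latter by induction on $|X|$. Set $\mathcal{B}' \defeq 2^X \setminus \mathcal{B}$; since $\mathcal{B}$ is decreasing, $\mathcal{B}'$ is increasing. An elementary rearrangement shows that the asserted bound $\P[S \in \mathcal{A} \mid S \in \mathcal{B}] \leq \P[S \in \mathcal{A}]$ is equivalent to
\[
    \P[S \in \mathcal{A} \cap \mathcal{B}'] \,\geq\, \P[S \in \mathcal{A}] \cdot \P[S \in \mathcal{B}'].
\]
Hence it suffices to prove that any two increasing families on $X$ are positively correlated under the product measure defined by $(p_x)_{x \in X}$.

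I would prove this by induction on $n \defeq |X|$. The case $n = 0$ is trivial, since $S = \0$ deterministically. For the inductive step, fix any $x \in X$, put $X' \defeq X \setminus \set{x}$, write $p \defeq p_x$, and let $S'$ denote the restriction of $S$ to $X'$. For any increasing family $\mathcal{F} \subseteq 2^X$, define its two ``slices''
\[
    \mathcal{F}_0 \,\defeq\, \set{T \subseteq X' \,:\, T \in \mathcal{F}}, \qquad \mathcal{F}_1 \,\defeq\, \set{T \subseteq X' \,:\, T \cup \set{x} \in \mathcal{F}}.
\]
A routine check shows that both $\mathcal{F}_0$ and $\mathcal{F}_1$ are increasing subfamilies of $2^{X'}$ and that $\mathcal{F}_0 \subseteq \mathcal{F}_1$; all three statements use that $\mathcal{F}$ is increasing. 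Applied to $\mathcal{A}$ and $\mathcal{B}'$, this yields numbers $a_i \defeq \P[S' \in \mathcal{A}_i]$ and $b_i \defeq \P[S' \in \mathcal{B}'_i]$ for $i \in \set{0,1}$ with $a_0 \leq a_1$ and $b_0 \leq b_1$.

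Now I would condition on whether $x \in S$ and apply the inductive hypothesis on the smaller ground set $X'$ to each of the pairs $(\mathcal{A}_i, \mathcal{B}'_i)$ for $i \in \set{0,1}$:
\[
    \P[S \in \mathcal{A} \cap \mathcal{B}'] \,=\, (1-p)\,\P[S' \in \mathcal{A}_0 \cap \mathcal{B}'_0] + p\,\P[S' \in \mathcal{A}_1 \cap \mathcal{B}'_1] \,\geq\, (1-p)\,a_0 b_0 + p\,a_1 b_1,
\]
while $\P[S \in \mathcal{A}] \cdot \P[S \in \mathcal{B}'] = \big((1-p)a_0 + p a_1\big)\big((1-p)b_0 + p b_1\big)$. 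A short expansion reveals the identity
\[
    (1-p)\,a_0 b_0 + p\,a_1 b_1 - \big((1-p)a_0 + p a_1\big)\big((1-p)b_0 + p b_1\big) \,=\, p(1-p)(a_1 - a_0)(b_1 - b_0),
\]
and the right-hand side is non-negative because of the monotonicity established above. Combining the last two displays closes the induction.

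The only real obstacle is bookkeeping: one must verify carefully that the slices $\mathcal{F}_i$ inherit monotonicity on the reduced ground set $X'$ so that the inductive hypothesis is genuinely available, and then spot the identity that factors the excess as a product of two differences of the same sign. Once that factorization $p(1-p)(a_1 - a_0)(b_1 - b_0)$ is isolated, no further inequalities are needed, and the asserted conditional bound in the theorem follows from the reduction in the first paragraph.
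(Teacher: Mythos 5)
Your argument is correct. The reduction in your first paragraph is valid: writing $\mathcal{B}' \defeq 2^X \setminus \mathcal{B}$, which is increasing since $\mathcal{B}$ is decreasing, the claimed bound is indeed equivalent (whenever $\P[S \in \mathcal{B}] > 0$, which is needed anyway for the conditional probability to be defined) to the positive-correlation inequality $\P[S \in \mathcal{A} \cap \mathcal{B}'] \geq \P[S \in \mathcal{A}]\,\P[S \in \mathcal{B}']$ for two increasing families. Your induction is the classical Harris argument: the slices $\mathcal{F}_0, \mathcal{F}_1$ are increasing on $X'$ with $\mathcal{F}_0 \subseteq \mathcal{F}_1$, the slices of $\mathcal{A} \cap \mathcal{B}'$ are exactly $\mathcal{A}_i \cap \mathcal{B}'_i$, and the expansion identity $p(1-p)(a_1-a_0)(b_1-b_0) \geq 0$ closes the induction; I verified the algebra. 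Note that the paper does not prove this statement at all---it is quoted from Alon and Spencer \cite[Theorem 6.3.2]{AlonSpencer}, where the monotone-events correlation inequality is typically obtained as a consequence of the more general FKG/Ahlswede--Daykin machinery. Your route is thus more elementary and self-contained: it proves exactly the special case the paper needs (product measure on $2^X$, two monotone families) by direct induction on the ground set, at the cost of not yielding the stronger lattice-theoretic statements, which the paper never uses.
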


Finally, we shall use the K\H{o}v\'ari--Sós--Turán theorem for $K_{s,t}$-free graphs:

\begin{theo}[K\H{o}v\'ari--Sós--Turán \cite{KST}; see also \cite{KST2}]\label{KST}
    Let $G$ be a bipartite graph with a bipartition $V(G) = X \sqcup Y$, where $|X| = m$, $|Y| = n$, and $m \geq n$. Suppose that $G$ does not contain a complete bipartite subgraph with $s$ vertices in $X$ and $t$ vertices in $Y$.
    Then
    $|E(G)| \leq s^{1/t} m^{1-1/t} n + tm$.
\end{theo}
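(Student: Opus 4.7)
The plan is to use the classical Kővári–Sós–Turán double counting argument. Write $E \defeq |E(G)|$ and $d(v) \defeq \deg_G(v)$ for $v \in X$, and let $N$ denote the number of pairs $(v, T)$ with $v \in X$, $T \subseteq Y$, $|T| = t$, and $T \subseteq N_G(v)$. I will bound $N$ in two complementary ways.

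First, grouping the pairs by $T$: for each fixed $t$-subset $T \subseteq Y$, the number of vertices $v \in X$ with $T \subseteq N_G(v)$ is at most $s - 1$, since $s$ such vertices together with $T$ would span a forbidden $K_{s,t}$. Summing over the $\binom{n}{t}$ choices of $T$ yields $N \leq (s-1)\binom{n}{t} \leq s\,n^t/t!$. Second, grouping by $v$: directly $N = \sum_{v \in X} \binom{d(v)}{t}$.

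It then remains to extract a lower bound on $N$ in terms of $E$. If $E \leq tm$ the conclusion is immediate, so I may assume $E/m > t$. Viewing $\binom{x}{t}$ as the polynomial $x(x-1)\cdots(x-t+1)/t!$ on $[t-1, \infty)$ and as $0$ on $[0, t-1]$, it is a convex non-negative function on $[0, \infty)$ that agrees with the integer binomial coefficient on $\N$. Jensen's inequality applied to this convex function and the values $d(v)$ then gives
\[
\sum_{v \in X} \binom{d(v)}{t} \,\geq\, m \binom{E/m}{t} \,\geq\, \frac{m (E/m - t + 1)^t}{t!}.
\]
Combining this with the upper bound $N \leq s\,n^t/t!$, cancelling $t!$, and taking $t$-th roots produces $E/m - t + 1 \leq s^{1/t} n m^{-1/t}$, which rearranges to the desired bound $E \leq s^{1/t} m^{1-1/t} n + (t-1)m \leq s^{1/t} m^{1-1/t} n + tm$. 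The only real subtlety is in the Jensen step, where disposing of the low-density case $E \leq tm$ beforehand ensures one is operating in the convex, non-negative regime and allows the replacement of $\binom{E/m}{t}$ by the clean power $(E/m - t + 1)^t/t!$; beyond that the argument is a routine computation.
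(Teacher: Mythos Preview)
Your proof is correct and is the standard K\H{o}v\'ari--S\'os--Tur\'an double counting argument. Note, however, that the paper does not give its own proof of this statement: Theorem~\ref{KST} is quoted from the literature (with references \cite{KST} and \cite{KST2}) as a background tool in the preliminaries, so there is nothing to compare against. As a minor remark, the hypothesis $m \geq n$ is not used anywhere in your argument, and indeed the bound holds without it; the assumption is only there to indicate which of the two symmetric bounds is the stronger one.
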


\section{The Coloring Procedure}\label{sectionOutline}

    Let $G$ be a graph with a DP-cover $\mathcal{H} = (L, H)$. Most of the proof of Theorem \ref{mainTheorem} is concerned with showing that, under suitable assumptions, $G$ has a proper partial $\mathcal{H}$-coloring with certain desirable properties. We start by introducing some notation.
    For a vertex $v\in V(G)$, the \emphd{average color-degree} $\overline{\deg}_\mathcal{H}(v)$ of $v$ in $\mathcal{H}$ is defined by the formula 
    \[
        \overline{\deg}_{\mathcal{H}}(v) \,\defeq\, \frac{1}{|L(v)|}\sum_{c \in L(v)}\deg_H(c).
    \]
    Given a partial $\mathcal{H}$-coloring $\phi$ of $G$ and a vertex $v \in V(G)$, we let
    \[
        L_\phi(v) \,\defeq\, \{c \in L(v) \,:\, N_H(c) \cap \im(\phi) = \0 \}.
    \]
    As explained in \S\ref{subsec:sketch}, if $v \not \in \dom(\phi)$, then $L_\phi(v)$ is the set of colors that can be used for $v$ without creating a conflict with the vertices that are already colored by $\phi$. The main tool in our proof of Theorem~\ref{mainTheorem} is the following technical lemma:

\begin{Lemma}\label{iterationTheorem} 
    There are $\tilde{d} \in \N$, $\tilde{\alpha} > 0$ such that the following holds. Let $\eta$, $d$, $\ell > 0$, $s$, $t\in \N$ satisfy:
    \begin{enumerate}[label=\ep{\normalfont\arabic*}]
        \item $d$ is sufficiently large: $d \geq \tilde{d}$,
        \item\label{item:ell} $\ell$ is bounded below and above in terms of $d$: $4\eta\, d < \ell < 100d$,
        \item\label{item:t} $s$ and $t$ are bounded in terms of $d$: $s \leq d^{1/10}$ and 
        $t  \leq \dfrac{\tilde{\alpha}\log d}{\log \log d}$, 
        \item\label{item:eta} $\eta$ is not far below $1/\log d$: $\dfrac{1}{\log^5d} < \eta < \dfrac{1}{\log d}.$
    \end{enumerate}
    Let $G$ be a graph with a DP-cover $\mathcal{H} = (L, H)$ such that for some $\beta$ satisfying $d^{-1/(200t)} \leq \beta \leq 1/10$,
    \begin{enumerate}[label=\ep{\normalfont\arabic*},resume]
        \item $H$ is $K_{1,s,t}$-free,
        \item\label{item:Delta} $\Delta(H) \leq 2d$,
        \item\label{item:list_assumption} the list sizes are roughly between $\ell/2$ and $\ell$: $(1-\beta)\ell/2 \,\leq\, |L(v)| \leq (1+\beta)\ell$ for all $v \in V(G)$,
        \item\label{item:averaged} average color-degrees are smaller for vertices with smaller lists of colors: \[\overline{\deg}_\mathcal{H}(v) \,\leq\, \left(2 - (1 - \beta)\frac{\ell}{|L(v)|}\right)d \quad \text{for all } v\in V(G).\]
    \end{enumerate}
    Then there exist a proper partial $\mathcal{H}$-coloring $\phi$ of $G$ and 
    an assignment of subsets $L'(v) \subseteq L_\phi(v)$ to each $v \in V(G) \setminus \dom(\phi)$ with the following properties. Let
    \[
        G' \defeq G\left[V(G)\setminus \dom(\phi)\right] \qquad \text{and} \qquad H' \defeq H\left[\textstyle\bigcup_{v \in V(G')} L'(v)\right].
    \]
    Define the following quantities:
    \[
        \begin{aligned}[c]
            \keep &\defeq \left(1 - \frac{\eta}{\ell}\right)^{2d}, \\
        \uncolor &\defeq \left(1  - \frac{\eta}{\ell}\right)^{\keep\,\ell/2},
        \end{aligned}
        \qquad
        \begin{aligned}[c]
            \ell' &\defeq \keep\, \ell, \\
            d' &\defeq \keep\, \uncolor\, d,
        \end{aligned} \qquad \beta' \defeq (1+36\eta)\beta.
    \]
    Let $\mathcal{H}' \defeq (L', H')$, so $\mathcal{H}'$ is a DP-cover of $G'$. 
    Then for all $v \in V(G')$: 
    \begin{enumerate}[label=\ep{\normalfont\roman*}]
        \item\label{item:I} $|L'(v)| \,\leq\, (1+\beta')\ell'$,
        
        \smallskip
        
        \item\label{item:II} $|L'(v)| \geq (1-\beta')\ell'/2$,
        
        \smallskip
        
        \item\label{item:III} $\Delta(H') \leq 2d'$, 
        
        \smallskip
        
        \item\label{item:IV} $\overline{\deg}_{\mathcal{H}'}(v) \leq \left(2 - (1 - \beta')\frac{\ell'}{|L'(v)|}\right)d'.$
    \end{enumerate}
\end{Lemma}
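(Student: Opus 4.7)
The plan is to construct $\phi$ and $L'(v)$ via a standard R\"odl-type nibble followed by a deterministic pruning step, and to verify conditions \eb{i}--\eb{iv} by the \LLL. In the nibble I would activate each color $c \in V(H)$ independently with probability $\eta/\ell$, and call $c$ \emph{kept} if no color in $N_H(c)$ is activated. Since the events ``$c$ activated'' and ``$c$ kept'' are independent and the set of activated-and-kept colors is independent in $H$, one may set $\phi(v)$ to be any such color in $L(v)$ (with an arbitrary tie-breaker), leaving $v$ uncolored otherwise. Write $\widetilde{L}(v) \defeq \set{c \in L(v) \,:\, c \text{ kept}} \subseteq L_\phi(v)$ for the preliminary restricted lists, and $\widetilde{\mathcal{H}} = (\widetilde{L}, \widetilde{H})$ for the induced DP-cover. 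For each uncolored $v$, I would then define $L'(v)$ by deleting from $\widetilde{L}(v)$ every color $c$ whose degree $\deg_{\widetilde{H}}(c)$ exceeds the threshold $2\overline{\deg}_{\widetilde{\mathcal{H}}}(v)$; Markov's inequality removes at most half of $\widetilde{L}(v)$, which immediately yields $|L'(v)| \geq |\widetilde{L}(v)|/2$ and $\Delta(H') \leq 2 \max_v \overline{\deg}_{\widetilde{\mathcal{H}}}(v)$, giving \eb{iii} once the average color-degree is controlled.

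The list-size conditions \eb{i} and \eb{ii} reduce to showing $|\widetilde{L}(v)| = (1 \pm \eps \pm \beta/2)\,\keep(d,\ell,\eta)\,|L(v)|$ with high probability. The upper bound $\E[|\widetilde{L}(v)|] \leq |L(v)|$ is trivial, and the matching lower bound follows by Jensen's inequality applied to the convex function $x \mapsto (1-\eta/\ell)^x$ together with hypothesis \ref{item:averaged}; the factor of $2$ lost to pruning is absorbed because $\keep < 1/2$ in the relevant regime. For concentration I would invoke Exceptional Talagrand (Theorem \ref{ExceptionalTalagrand}) with trials indexed by activations of colors within graph-distance $1$ from $L(v)$. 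The heart of the argument, underlying \eb{iv}, is that $\E[\overline{\deg}_{\widetilde{\mathcal{H}}}(v)] \approx \uncolor(d,\ell,\eta) \cdot \overline{\deg}_\mathcal{H}(v)$: a neighbor $c' \in L(u)$ of a kept color $c \in L(v)$ survives in $\widetilde{L}(u)$ only if $c'$ is itself kept and $u$ is left uncolored, events of probability $\keep$ and $\approx \uncolor$ respectively. Making this rigorous uses the $K_{1,s,t}$-free hypothesis to decouple ``$c$ kept'' from $\deg_{\widetilde{H}}(c)$---two neighbors of $c$ in $L(u)$ can share only $O(s+t)$ common neighbors in $L(v)$---and this occupies \S\ref{sectionProofofExpectation}. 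Concentration of $\overline{\deg}_{\widetilde{\mathcal{H}}}(v)$, addressed in \S\ref{sectionProofofConcentration}, is obtained by the random-partitioning device from \cite{BipartiteNibble}: the sum defining the average color-degree is randomly split into nearly independent pieces before Talagrand is applied.

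With tail bounds of order $\exp(-d^{\Omega(1)})$ on each bad event---``$|\widetilde{L}(v)|$ outside its target range'' or ``$\overline{\deg}_{\widetilde{\mathcal{H}}}(v)$ larger than the target value suggested by \eb{iv}'', indexed by $v \in V(G)$---I would apply the \LLL\ (Theorem \ref{LLL}). A bad event at $v$ depends only on activations of colors within graph-distance $2$ from $L(v)$ in $H$, so by \ref{item:Delta} and \ref{item:list_assumption} the dependency degree is polynomial in $d$, and the slack built into $\beta = d^{-1/(200t)}$ and $t = O(\log d / \log \log d)$ is comfortably enough for $4p\, d_{LLL} \leq 1$. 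On an outcome avoiding every bad event, \eb{i}--\eb{iv} follow from the definitions of $\ell'$ and $d'$ together with the pruning step. The main obstacle is clearly the analysis of $\overline{\deg}_{\widetilde{\mathcal{H}}}(v)$: even its expectation is subtle because ``$c$ kept'' and $\deg_{\widetilde{H}}(c)$ are a priori strongly correlated, and the $K_{1,s,t}$-free hypothesis is essential for the decorrelation; concentration is harder still, since two colors in $L(v)$ sharing a neighbor in $H$ destroy any naive Lipschitz estimate, and the random-partitioning trick is precisely what replaces a hopeless Talagrand application on the whole sum by manageable concentration of approximately independent sub-sums.
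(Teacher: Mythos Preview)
Your overall architecture---nibble, prune, LLL---matches the paper, but there are two genuine gaps.

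\textbf{First, you omit the equalizing coin flips.} In the paper's procedure each color $c$ is kept only if, in addition to having no activated neighbor, an independent Bernoulli variable $\eq(c)$ with success probability $\keep/(1-\eta/\ell)^{\deg_H(c)}$ equals $1$. This forces $\P[c \text{ kept}] = \keep$ \emph{exactly}, independent of $\deg_H(c)$. Without it, $\P[c \text{ kept}] = (1-\eta/\ell)^{\deg_H(c)}$, so a vertex whose list contains many low-degree colors will have $\E[|\widetilde{L}(v)|]$ much larger than $\keep\,|L(v)|$---in the extreme case of all degrees zero, $|\widetilde{L}(v)| = |L(v)| \approx \ell$, whereas condition \eb{i} demands $|L'(v)| \lesssim \keep\,\ell$. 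Your Jensen argument only gives the \emph{lower} bound $\E[|\widetilde{L}(v)|] \geq \keep\,|L(v)|$, and your sentence ``the upper bound $\E[|\widetilde{L}(v)|] \leq |L(v)|$ is trivial'' is not the bound you need. Relatedly, your claim that $\keep < 1/2$ is simply false: under hypothesis \ref{item:ell} one has $\keep \geq 1 - 2\eta d/\ell \geq 1/2$.

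\textbf{Second, you do not explain how condition \eb{iv} survives the pruning step.} You prune at the vertex-dependent threshold $2\overline{\deg}_{\widetilde{\mathcal{H}}}(v)$; the paper prunes at the fixed threshold $2d'$. Either way, the number of colors removed from $\widetilde{L}(v)$ is uncontrolled (anywhere between $0$ and $|\widetilde{L}(v)|/2$), so the new list size $|L'(v)|$ and the new average color-degree $\overline{d}(v)$ can vary wildly, and a bound on $\overline{\deg}_{\widetilde{\mathcal{H}}}(v)$ alone does not give the specific relationship in \eb{iv}. The paper handles this via the auxiliary quantity $\delta'(v) = \lambda'(v)\,\overline{d}(v) + (1-\lambda'(v))\,2d'$: one first proves the pointwise inequality $\delta'(v) \leq \hat{\delta}(v)$ (where $\hat{\delta}$ is the analogous quantity before pruning), which holds precisely because every removed color had degree exceeding $2d'$, and then bounds $\hat{\delta}(v)$ using the concentration of $|E_K(v)\cap E_U(v)|$. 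This monotonicity-under-pruning device is the missing link between your concentration estimates and the conclusion \eb{iv}; without it the proof does not close. As a minor point, your account of where $K_{1,s,t}$-freeness enters is garbled: it is used, via K\H{o}v\'ari--S\'os--Tur\'an on $H[N_H(c)]$, to bound the number of $c' \in N_H(c)$ with many common neighbors with $c$, which controls $\sum_{c'} (1-\eta/\ell)^{-|N_H(c)\cap N_H(c')|}$ in the expectation of $|E_K(v)|$.
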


    We remark that above and in what follows, we let $\keep\,\uncolor$ indicate the product of the terms $\keep$ and $\uncolor$.
    Notice that conditions \ref{item:I}--\ref{item:IV} in Lemma~\ref{iterationTheorem} are analogous to the assumptions \ref{item:Delta}--\ref{item:averaged}, but with $\ell$, $d$, and $\beta$ replaced by $\ell'$, $d'$, and $\beta'$. This will allow us to apply Lemma~\ref{iterationTheorem} iteratively in \S\ref{sectionIterations}. To prove Lemma \ref{iterationTheorem}, we employ a variant of Alon and Assadi's ``Wasteful Coloring Procedure'' used in \cite[Appendix A]{Palette}, where it is applied to triangle-free graphs. \ep{The parameters $\keep$ and $d'$ in this procedure are defined as in Lemma~\ref{iterationTheorem}.}\\ 

    \begin{mdframed}
    \hypertarget{procedure}{\textit{The Coloring Procedure}}

    \medskip
    
    \begin{mdframed}
    \noindent \textbf{Input:} A graph $G$ with a DP-cover $\mathcal{H} = (L,H)$ and parameters $\eta \in [0,1]$ and $d$, $\ell> 0$.

    \smallskip

    \noindent \textbf{Output:}
    A proper partial $\mathcal{H}$-coloring $\phi$ and subsets $L'(v) \subseteq L_\phi(v)$ for all $v \in V(G)$.
    \end{mdframed}
    
    \smallskip

    \begin{enumerate}[label=\ep{\normalfont{}S\arabic*}]
    \item For every $v \in V(G)$, generate a random subset $A(v) \subseteq L(v)$ by putting each color $c \in L(v)$ into $A(v)$ independently with probability $\eta/\ell$. The colors in $A(v)$ are said to be \emphdef{activated}, and we let $A \defeq \bigcup_{v \in V(G)} A(v)$ be the set of all activated colors.
    
    \smallskip
    
    \item\label{step:eq} Let $(\eq(c) \,:\, c \in V(H))$ be a family of independent random variables 
    with distribution
    \[\eq(c) \sim \Ber\left(\keep  /\left(1 - \eta/\ell\right)^{\deg_H(c)}\right).\]
    We call these variables the \emphd{equalizing coin flips}.
    
    \smallskip
    
    \item For each vertex $v \in V(G)$, let
    \[
        K(v) \,\defeq\, \set{c \in L(v) \,:\, N_H(c) \cap A = \0,\, \eq(c) = 1}.
    \]
    Let $K \defeq \bigcup_{v \in V(G)}K(v)$. We call the colors in $K$ \emphd{kept} and the ones not in $K$ \emphd{removed}.  

    \smallskip
    
    \item For each $v \in V(G)$, if $A(v) \cap K(v) \neq \0$, then we set $\phi(v)$ to any color in $A(v) \cap K(v)$. For all other vertices, $\phi(v)$ is undefined. For convenience, we write $\phi(v) = \mathsf{blank}$ if $\phi(v)$ is undefined, so $\dom(\phi) = \set{v \,:\, \phi(v) \neq \mathsf{blank}}$. We call the vertices $v \not \in \dom(\phi)$ \emphdef{uncolored} and define 
    \[U \,\defeq\, \set{c \in V(H)\,:\, \phi\left(L^{-1}(c)\right) = \blank}.\]
    \ep{Recall that $L^{-1}(c)$ denotes the underlying vertex of $c$ in $G$.}
    
    \smallskip
    
    \item\label{item:step5} For each vertex $v \in V(G)$, we let
    \[
    L'(v) \,\defeq\, \set{c\in K(v)\,:\, |N_H(c) \cap K \cap U| \leq 2\,d'}.
    \]
\end{enumerate}
\end{mdframed}
\hspace{2pt}

    Notice that, under assumption \ref{item:Delta} of Lemma~\ref{iterationTheorem},
    \[
        0 \,\leq\, \frac{\keep}{\left(1 - \eta/\ell\right)^{\deg_H(c)}} \,=\, \left(1 - \frac{\eta}{\ell}\right)^{2d - \deg_H(c)} \,\leq\, 1
    \]
    for every color $c \in V(H)$, so the equalizing coin flips in step \ref{step:eq} are well-defined.

    In \S\S\ref{sectionProofOfIterationTheorem}, \ref{sectionProofofExpectation}, and \ref{sectionProofofConcentration}, we will show that, with positive probability, the output of the \hyperlink{procedure}{Coloring Procedure} satisfies the conclusion of Lemma~\ref{iterationTheorem}. \ep{Notice that, for convenience, the \hyperlink{procedure}{Coloring Procedure} defines $L'(v)$ for every vertex $v \in V(G)$, although we only use this notation for uncolored vertices in the statement of Lemma~\ref{iterationTheorem}.} With this procedure in mind, we can now explain the meaning of the quantities $\keep$ and $\uncolor$. 
Suppose $G$ and $\mathcal{H} = (L,H)$ satisfy the conditions of Lemma~\ref{iterationTheorem}. If we run the \hyperlink{procedure}{Coloring Procedure} with these $G$ and $\mathcal{H}$, then $\keep$ is precisely the probability that a color $c \in V(H)$ is kept \ep{i.e., $c \in K$}, while $\uncolor$ is an upper bound on the probability that a vertex $v \in  V(G)$ is uncolored \ep{we shall verify these facts in \S\S\ref{sectionProofOfIterationTheorem} and~\ref{sectionProofofExpectation}}.
As mentioned in \S\ref{subsec:sketch}, we can write
\[
    \frac{d'}{\ell'} \,=\, \uncolor\, \frac{d}{\ell},
\]
which intuitively means that the ratio between the degrees of the colors and the sizes of the lists ``ought to'' decrease by a factor of $\uncolor$. Unfortunately, as explained in \S\ref{subsec:sketch}, we cannot guarantee that it actually does decrease this much, which forces us to instead keep track of a more complicated relationship between list sizes and average color-degrees. Nevertheless, we will show in \S\ref{sectionIterations} that after applying Lemma~\ref{iterationTheorem} iteratively $O(\log d \log \log d)$ times, we will be able to complete the coloring using the
following proposition:

\begin{prop}\label{finalBlow}
    Let $G$ be a graph with a $DP$-cover $\mathcal{H} = (L,H)$ such that $|L(v)| \geq 8d$ for every $v \in V(G)$, where $d$ is the maximum degree of $H$. Then there exists a proper $\mathcal{H}$-coloring of $G$.
\end{prop}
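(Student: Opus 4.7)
I would prove this by a standard application of the symmetric Lov\'asz Local Lemma (Theorem~\ref{LLL}). First, reduce to the case of uniform list sizes: assuming $d\geq 1$ (the case $d=0$ is trivial, since $H$ has no edges and any selection of one color per vertex is proper), choose for each $v\in V(G)$ an arbitrary subset $L'(v)\subseteq L(v)$ with $|L'(v)|=8d$, and set $H' \defeq H\left[\bigcup_{v\in V(G)} L'(v)\right]$. Then $\mathcal{H}' \defeq (L',H')$ is a DP-cover of $G$ with $\Delta(H')\leq d$ and $|L'(v)|=8d$ for all $v$, and any proper $\mathcal{H}'$-coloring is automatically a proper $\mathcal{H}$-coloring.

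Next, independently for each vertex $v\in V(G)$, pick $\phi(v)\in L'(v)$ uniformly at random. For each edge $e=\set{c,c'}$ of $H'$, with $c\in L'(u)$ and $c'\in L'(v)$, let $A_e$ be the bad event that $\phi(u)=c$ and $\phi(v)=c'$. Then $\phi$ is a proper $\mathcal{H}'$-coloring if and only if none of the events $A_e$ occur, and $\P[A_e] = 1/(8d)^2 = 1/(64d^2) =: p$.

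To verify the hypothesis of Theorem~\ref{LLL}, I would observe that $A_e$ is determined by the two independent trials $\phi(u)$ and $\phi(v)$, so it is mutually independent of every $A_f$ whose edge $f$ has no endpoint in $L'(u)\cup L'(v)$. Since each color has degree at most $d$ in $H'$, the number of edges of $H'$ meeting $L'(u)\cup L'(v)$ is at most $|L'(u)|\,d + |L'(v)|\,d = 16d^2$, so each $A_e$ is dependent on at most $d_{LLL} \defeq 16d^2$ of the other bad events. Then $4p\,d_{LLL} \leq 4\cdot\frac{1}{64d^2}\cdot 16d^2 = 1$, so Theorem~\ref{LLL} yields, with positive probability, an outcome in which no $A_e$ occurs; any such $\phi$ is the desired proper $\mathcal{H}$-coloring. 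The argument is essentially routine, and the only (minor) technical point is the initial reduction to uniform list size, which is what keeps the dependency bound independent of $\max_{v} |L(v)|$.
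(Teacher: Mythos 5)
Your proposal is correct and is essentially the standard argument behind Proposition~\ref{finalBlow}, which the paper does not prove in-text but delegates to the cited sources (Reed, Molloy--Reed \S4.1, and the appendix of \cite{JMTheorem}): truncate every list to uniform size $8d$, color each vertex uniformly and independently, take one bad event per edge of the truncated cover graph with probability $1/(64d^2)$ and dependency degree at most $16d^2$, and apply Theorem~\ref{LLL} since $4pd_{LLL}\leq 1$. No substantive differences from the referenced proof, and the uniform-list reduction is handled correctly.
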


This proposition is standard and proved using the \hyperref[LLL]{Lov\'asz Local Lemma}. Its first appearance \ep{in the list-coloring setting} is in the paper \cite{Reed} by Reed; see also \cite[\S4.1]{MolloyReed} for a textbook treatment. (A proof in the DP-coloring framework can be found in \cite[Appendix]{JMTheorem}.)
Note that a result of Haxell allows one to replace the constant $8$ with $2$ in the list coloring setting \cite{haxell2001note}.

\section{Proof of Lemma \ref{iterationTheorem}}\label{sectionProofOfIterationTheorem}

    Suppose parameters $\eta$, $d$, $\ell$, $s$, $t$, $\beta$, and a graph $G$ with a DP-cover $\mathcal{H} = (L, H)$ satisfy the assumptions of Lemma~\ref{iterationTheorem}. Throughout, we shall assume that $\tilde{d}$ is sufficiently large, while $\tilde{\alpha}$ is sufficiently small. Since we may remove all the edges of $G$ whose corresponding matchings in $H$ are empty, we may assume that $\Delta(G) \leq 2(1+\beta) \ell d$. Let the quantities $\keep$, $\uncolor$, $d'$, $\ell'$, and $\beta'$ be defined as in the statement of Lemma~\ref{iterationTheorem}. Suppose we have carried out the \hyperlink{procedure}{Coloring Procedure} with these $G$ and $\mathcal{H}$. As in the statement of Lemma~\ref{iterationTheorem}, we let
    \[
        G' \defeq G\left[V(G)\setminus \dom(\phi)\right],  \qquad H' \defeq H\left[\textstyle\bigcup_{v \in V(G')} L'(v)\right], \qquad \text{and} \qquad \mathcal{H}' \defeq (L', H').
    \]
    For each $v \in V(G)$, we define the following quantities:
    \[
        \ell(v) \defeq |L(v)| \qquad \text{and} \qquad \overline{\deg}(v) \defeq \overline{\deg}_\mathcal{H}(v),
    \]
    as well as the following random variables:
    \[
        k(v) \defeq |K(v)|, \qquad  \ell'(v) \defeq |L'(v)|, \qquad \text{and} \qquad \overline{d}(v) \defeq \frac{1}{\ell'(v)}\sum_{c\in L'(v)}|N_H(c) \cap V(H')|.
    \]
    By definition, if $v \in V(G')$, then $\overline{d}(v) = \overline{\deg}_{\mathcal{H}'}(v)$. Our goal is to verify that statements \ref{item:I}--\ref{item:IV} in Lemma \ref{iterationTheorem} hold for every $v \in V(G')$ with positive probability. To assist with this task we follow an idea of Pettie and Su \cite{PS15} and define the following auxiliary quantities:
    \begin{align*}
        \lambda(v) &\defeq \frac{\ell(v)}{\ell}, & \lambda'(v)&\defeq \frac{\ell'(v)}{\ell'}, \\
        \delta(v) &\defeq \lambda(v)\,\overline{\deg}(v) + (1-\lambda(v))\,2d, & \delta'(v) &\defeq \lambda'(v)\,\overline{d}(v) + (1-\lambda'(v))\,2d'.
    \end{align*}
    Note that, by \ref{item:list_assumption}, we have $(1-\beta)/2 \leq \lambda(v) \leq 1+ \beta$. When $\lambda(v) \leq 1$, we can think of $\delta(v)$ as what the average color-degree of $v$ would become if we add $\ell - \ell(v)$ colors of degree $2d$ to $L(v)$. We remark that in both \cite{PS15} and \cite{Palette}, the value $\lambda(v)$ is artificially capped at $1$. However, it turns out that there is no harm in allowing $\lambda(v)$ to exceed $1$, which moreover makes the analysis somewhat simpler. The upper bound on $\overline{\deg}(v)$ given by \ref{item:averaged} implies that
    \begin{align}\label{eqn:avg_delta}
        \delta(v) \,\leq\, (1+\beta)d.
    \end{align}
    It turns out that an upper bound on $\delta'(v)$ suffices to derive statements \ref{item:II}--\ref{item:IV} in Lemma \ref{iterationTheorem}:

\begin{Lemma}\label{lemma:deltaprime}
    If $\delta'(v) \leq (1 + \beta')d'$, then conditions \ref{item:II}--\ref{item:IV} of Lemma \ref{iterationTheorem} are satisfied.
\end{Lemma}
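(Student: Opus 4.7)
The statement is essentially an algebraic unwinding of the definitions, combined with one structural observation from the Coloring Procedure. Write $\eps' \defeq (1+3\eta)\eps+\beta$ for brevity, so the hypothesis reads $\delta'(v) \leq (1+\eps')d'$.

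First I would dispatch \ref{item:III} directly from the procedure. By Step~\ref{item:step5}, every color $c \in L'(w)$ satisfies $|N_H(c) \cap K \cap U| \leq 2d'$. For any $w \in V(G')$, we have $L'(w) \subseteq K(w) \subseteq K$, and since $w \notin \dom(\phi)$ we also have $L'(w) \subseteq U$. Hence $V(H') \subseteq K \cap U$, so for every $c \in V(H')$,
\[
    \deg_{H'}(c) \,=\, |N_H(c) \cap V(H')| \,\leq\, |N_H(c) \cap K \cap U| \,\leq\, 2d'.
\]
Averaging this inequality over $c \in L'(v)$ also gives $\overline{d}(v) \leq 2d'$ as a byproduct.

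For \ref{item:II}, I would rewrite the defining identity as $\delta'(v) = 2d' - \lambda'(v)(2d' - \overline{d}(v))$, so the hypothesis becomes $\lambda'(v)(2d' - \overline{d}(v)) \geq (1-\eps')d'$. Combined with $2d' - \overline{d}(v) \leq 2d'$ from the previous step, this immediately yields $\lambda'(v) \geq (1-\eps')/2$, i.e., $\ell'(v) \geq (1-\eps')\ell'/2$. For \ref{item:IV}, the assumptions $\eps \leq 1/10$, $\eta < 1/\log d$, and $\beta = d^{-1/(200t)}$ small (for $d \geq \tilde{d}$) force $\eps' < 1$, so the bound just obtained gives $\lambda'(v) > 0$. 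Isolating $\overline{d}(v)$ in $\lambda'(v)\,\overline{d}(v) + (1-\lambda'(v))\,2d' \leq (1+\eps')d'$ and dividing by $\lambda'(v)$ produces
\[
    \overline{d}(v) \,\leq\, \left(2 - \frac{1-\eps'}{\lambda'(v)}\right)d' \,=\, \left(2 - (1-\eps')\,\frac{\ell'}{\ell'(v)}\right)d',
\]
which is exactly condition \ref{item:IV}.

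The only non-algebraic ingredient is the inclusion $V(H') \subseteq K \cap U$, which is built into the definition of $L'$ in Step~\ref{item:step5}; the rest is a careful rearrangement of a single linear identity against the hypothesis, so I do not anticipate any obstacle here. The real work, left for subsequent sections, will be to verify the hypothesis $\delta'(v) \leq (1+\eps')d'$ itself through the expectation and concentration analyses of the average color-degree.
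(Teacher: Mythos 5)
Your proof is correct and follows essentially the same route as the paper: condition \ref{item:III} read off directly from Step \ref{item:step5} (via $V(H') \subseteq K \cap U$), and \ref{item:II} and \ref{item:IV} obtained by rearranging the linear identity defining $\delta'(v)$ against the hypothesis. The only nitpick is that $2d' - \overline{d}(v) \leq 2d'$ needs only $\overline{d}(v) \geq 0$, not the bound $\overline{d}(v) \leq 2d'$ from your first step, but this is immaterial.
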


\begin{proof}
    From Step \ref{item:step5} of the \hyperlink{procedure}{Coloring Procedure}, we have $\Delta(H') \leq 2d'$ and so condition \ref{item:III} always holds. Now take any $v \in V(G')$. By the definition of $\delta'(v)$, we have
\[\delta'(v) \geq (1-\lambda'(v))2d' \quad \implies \quad \lambda'(v) \geq 1 - \frac{\delta'(v)}{2d'} \geq \frac{1 - \beta'}{2}.\]
From here, we can bound $\ell'(v)$ as follows:
\[\ell'(v) \,=\, \lambda'(v)\,\ell' \,\geq\, (1 - \beta')\frac{\ell'}{2}.\]
    Thus, condition \ref{item:II} holds. From the inequality
    \[
        \lambda'(v)\,\overline{d}(v) + (1-\lambda'(v))\,2d' \,=\, \delta'(v) \,\leq\, (1 + \beta')d',
    \]
    it follows that
    \[
        \overline{d}(v) \,\leq\, \frac{1}{\lambda'(v)} \, (2\lambda'(v) -(1 - \beta'))d'  \,=\, \left(2 -(1 - \beta')\frac{\ell'}{\ell'(v)}\right)d',
    \]
    and hence condition \ref{item:IV} holds as well.
\end{proof}

Therefore, to prove Lemma \ref{iterationTheorem}, it suffices to show that, with positive probability, the outcome of the \hyperlink{procedure}{Coloring Procedure} satisfies $\delta'(v) \leq (1 + \beta')d'$ and $\ell'(v) \leq (1+\beta')\ell'$ for all $v \in V(G)$. We shall now prove some intermediate results. Before we do so, consider the following inequality, which will be useful for proving certain bounds and follows for small enough $\tilde\alpha$:
\begin{align}\label{etabeta}
    \eta\beta \,\geq\, d^{-1/(200t)}/\log^5d \,\geq\, d^{-1/(100t)}.
\end{align}

In the next lemma, we show that $k(v)$ is concentrated around its expected value, which in particular implies that condition \ref{item:I} in Lemma \ref{iterationTheorem} is satisfied with high probability. 

\begin{Lemma}\label{listConcentration}
    $\P[|k(v) -\keep\,\ell(v)| \geq \eta\beta\,\keep\,\ell(v)] \leq \exp\left(-d^{1/10}\right)$.
\end{Lemma}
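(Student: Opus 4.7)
The plan is to observe that $k(v)$ is a sum of mutually independent Bernoulli random variables, which reduces the lemma to a single application of the Chernoff bound. The key structural observation is that for any two distinct $c, c' \in L(v)$, the neighborhoods $N_H(c)$ and $N_H(c')$ are disjoint in $H$. Indeed, for each $u \in N_G(v)$, the edges of $H$ between $L(u)$ and $L(v)$ form a matching, so each color in $L(u)$ is adjacent to at most one color of $L(v)$. Consequently, the events $\{N_H(c) \cap A = \0\}$ and $\{N_H(c') \cap A = \0\}$ depend on disjoint collections of activation trials; combined with the independence of the equalizing coin flips from each other and from all activations, this shows that the indicators $\mathbbm{1}[c \in K(v)]$, $c \in L(v)$, are mutually independent.

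By the very choice of the distribution of $\eq(c)$, we have $\P[c \in K(v)] = (1 - \eta/\ell)^{\deg_H(c)} \cdot \keep/(1-\eta/\ell)^{\deg_H(c)} = \keep$ for every $c \in L(v)$, so $k(v)$ is a binomial random variable with $\ell(v)$ trials and success probability $\keep$, with mean exactly $\keep\,\ell(v)$. Applying Theorem~\ref{chernoff} with $\xi = \beta_1 \keep\,\ell(v)$ then gives
\[
    \P\!\left[|k(v) - \keep\,\ell(v)| \geq \beta_1 \keep\,\ell(v)\right] \,\leq\, 2\exp\!\left(-\frac{\beta_1^2\,\keep\,\ell(v)}{3}\right).
\]

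What remains is to check that the exponent above exceeds $d^{1/10}$ (with a little slack to absorb the factor of $2$), which is a routine parameter estimate. Using hypothesis~\ref{item:ell}, $\eta/\ell < 1/(4d)$, so $\keep = (1 - \eta/\ell)^{2d} \geq e^{-1/2}$ for large $d$. From~\ref{item:list_assumption}, \ref{item:ell}, and~\ref{item:eta} we get $\ell(v) \geq (1-\eps)\ell/2 \geq 2(1-\eps)\eta d = \Omega(d/\log^5 d)$. Finally, since $t \geq 1$, $\beta_1^2 = d^{-1/(50t)} \geq d^{-1/50}$. Multiplying these lower bounds yields $\beta_1^2\,\keep\,\ell(v) = \Omega(d^{49/50}/\log^5 d)$, which dominates $d^{1/10}$ for $d \geq \tilde{d}$ with $\tilde{d}$ large enough. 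There is no real obstacle here: essentially all the content of the proof is the independence observation, after which the standard Chernoff bound finishes things off. This easy concentration result stands in contrast to the more delicate analyses of $\ell'(v)$ and $\overline{d}(v)$ to come, which will require Exceptional Talagrand together with the random partitioning technique foreshadowed in the introduction.
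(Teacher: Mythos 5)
Your proof is correct and takes essentially the same route as the paper's: the matching structure of the cover makes the indicators $\mathbbm{1}[c \in K(v)]$, $c \in L(v)$, independent $\Ber(\keep)$ variables, so the Chernoff bound applies directly, and the parameter estimates $\keep \geq 1/2$, $\ell(v) = \Omega(d/\log^5 d)$, $\beta_1^2 \geq d^{-1/50}$ finish the job. The only (immaterial) difference is in the final numerical check, where the paper bounds $\ell(v) \geq d^{1-1/(50t)}$ via the upper bound on $t$ rather than using $t \geq 1$ to bound $\beta_1^2$ from below.
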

\begin{proof}
    Recall that $K(v)$ is the set of colors $c \in L(v)$ that have no activated neighbors and satisfy $\eq(c) = 1$. By the definition of the equalizing coin flip $\eq(c)$, the probability of this happening for a given color $c$ is precisely $\keep$. Therefore, $\E[k(v)] = \keep\,\ell(v)$. Recall that edges in $G$ correspond to matchings in $H$ and so, no two colors in $L(v)$ share a neighbor. Furthermore, all activations and equalizing coin flips happen independently and therefore we may apply the \hyperref[chernoff]{Chernoff Bound} \ep{Theorem \ref{chernoff}} to get for $d$ large enough:
    \begin{align*}
        \P[|k(v)-\keep\,\ell(v)| \geq \eta\beta\,\keep\,\ell(v)] &\leq 2\exp\left(-\frac{\eta^2\beta^2\,\keep\,\ell(v)}{3}\right)\\
        &\leq 2\exp\left(-\frac{d^{1 - 1/(25t)}}{6}\right) \\
        &\leq \exp\left(-d^{1/10}\right),
    \end{align*}
    where the second inequality follows since  $\keep \geq 1 - \frac{2d\eta}{\ell} \geq \frac{1}{2}$ (here we are using the inequality $(1-x)^y \geq 1-yx$ for $y\geq 1$, and  \ref{item:ell}), by inequality \eqref{etabeta}, and because
    \[
        \ell(v) \,\geq\, (1-\beta)\ell/2 \,\geq\, 2(1-\beta) \eta\, d \,\geq\,  2(1-\beta)\,\frac{d}{\log^5d} \,\geq\, d^{1-1/(50t)}
    \]
    by \ref{item:list_assumption}, \ref{item:ell}, \ref{item:eta}, and \ref{item:t} \ep{assuming $\tilde{\alpha}$ is small enough}.
\end{proof}

    Since $\ell'(v)\leq k(v)$, we have the following with probability at least $1-\exp\left(-d^{1/10}\right)$:
    \begin{align*}
        \ell'(v) &\leq (1+\eta\beta)\,\keep\,\ell(v) \\
        &\leq (1+\eta\beta)\,(1+\beta)\,\keep\,\ell \\
        &\leq (1+ \beta')\,\ell'.
    \end{align*}
    This implies that condition \ref{item:I} is met. 
    
In order to analyze the average color-degrees in the DP-cover $\mathcal{H}'$, we define:
\begin{align*}
    \keptedges &\defeq \{cc'\in E(H)\,:\, c\in L(v),\ c'\in N_H(c), \text{ and } c,c'\in K\}, \\
    \uncoloredges &\defeq \{cc'\in E(H)\,:\, c\in L(v),\ c'\in N_H(c), \text{ and }
    c' \in U\}, \\
    \nd &\defeq \frac{|\keptedges \cap \uncoloredges|}{k(v)}.
\end{align*}
Note that $\nd$ is what the average color-degree of $v$ would be if instead of removing colors with too many neighbors on Step \ref{item:step5} of the \hyperlink{procedure}{Coloring Procedure}, we had just set $L'(v) = K(v)$.
    The only place in our proof that relies on $H$ being $K_{1,s,t}$-free is the following lemma, which gives a bound on the expected value of $|\keptedges \cap \uncoloredges|$.
    
\begin{Lemma}\label{expectationKeptUncolor}
$\E[|\keptedges\cap\uncoloredges|] \leq \keep^2\,\uncolor\,\ell(v)\,\overline{\deg}(v)(1+6\eta\beta)$.
\end{Lemma}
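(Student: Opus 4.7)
My plan is to compute $\E[|E_K(v) \cap E_U(v)|]$ by summing, over ordered pairs $(c, c')$ with $c \in L(v)$ and $c' \in N_H(c)$, the probability of the event $\{c \in K,\, c' \in K,\, c' \in U\}$, and to decompose each such probability as $\P[c \in K, c' \in K] \cdot \P[c' \in U \mid c \in K, c' \in K]$. The $K_{1,s,t}$-free hypothesis will enter only at the final step, via the \hyperref[KST]{K\H{o}v\'ari--S\'os--Tur\'an theorem}, to control an error term that measures the average overlap of neighborhoods in $H$.

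For the first factor, the joint event $\{c \in K\} \cap \{c' \in K\}$ is equivalent to requiring $A \cap (N_H(c) \cup N_H(c')) = \emptyset$ together with $\eq(c) = \eq(c') = 1$. Applying the inclusion--exclusion identity $|N_H(c) \cup N_H(c')| = \deg_H(c) + \deg_H(c') - |N_H(c) \cap N_H(c')|$ and using the mutual independence of activations and equalizing coin flips, a direct computation gives the exact formula
\[
    \P[c \in K, c' \in K] \,=\, \keep^2 (1-\eta/\ell)^{-|N_H(c) \cap N_H(c')|}.
\]

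For the conditional factor, I set $u \defeq L^{-1}(c')$ and $X \defeq N_H(c) \cup N_H(c') \cup \{c,c'\}$. The crucial combinatorial observation is that $L(u) \cap X = \{c'\}$: indeed, $L(u)$ is independent in $H$, and the matching $M_{vu}$ between $L(v)$ and $L(u)$ places at most one neighbor of $c$ into $L(u)$, namely $c'$. Consequently, under the conditioning $\{c \in K, c' \in K\}$, the activations of colors in $L(u) \setminus \{c'\}$ remain independent $\Ber(\eta/\ell)$ variables, and the events $\{c''' \in K(u)\}$ for $c''' \in L(u) \setminus \{c'\}$ stay mutually independent (using that $N_H(c''')$ is disjoint from $N_H(c'''')$ for distinct $c''', c'''' \in L(u)$, which is another consequence of the matching property). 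A direct calculation yields
\[
    \P[c''' \in K(u) \mid c \in K, c' \in K] \,=\, \keep \cdot (1-\eta/\ell)^{-|N_H(c''') \cap X|} \,\geq\, \keep,
\]
so each factor in the product expressing $\P[A(u) \cap K(u) = \emptyset \mid c \in K, c' \in K]$ is at most $(1 - \keep\,\eta/\ell)$, giving $\P[c' \in U \mid c \in K, c' \in K] \,\leq\, (1 - \keep\,\eta/\ell)^{\ell(u) - 1}$. Using $\ell(u) \geq (1-\eps)\ell/2$ from \ref{item:list_assumption} and comparing exponents against $\uncolor = (1-\eta/\ell)^{\keep\,\ell/2}$, this is at most $\uncolor \cdot (1 + \eta\eps + O(\eta/\ell))$.

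The final step is the double summation. After combining the two estimates and summing, the only non-trivial quantity to bound is $\sum_{c, c'} (1-\eta/\ell)^{-|N_H(c) \cap N_H(c')|}$. Using $(1-\eta/\ell)^{-k} \leq 1 + O(k\eta/\ell)$ for $k \leq 2d$, together with the identity $\sum_{c' \in N_H(c)} |N_H(c) \cap N_H(c')| = 2\,|E(H[N_H(c)])|$, reduces matters to bounding $\sum_c |E(H[N_H(c)])|$. This is where the $K_{1,s,t}$-free hypothesis is finally deployed: each induced subgraph $H[N_H(c)]$ is $K_{s,t}$-free, so Theorem~\ref{KST} (applied via the bipartite double cover) gives $|E(H[N_H(c)])| \leq s^{1/t} \deg_H(c)^{2-1/t} + t\,\deg_H(c)$. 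Plugging in $s \leq d^{1/10}$, $\deg_H(c) \leq 2d$, and $\eta/\ell \leq 1/(4d)$ yields a multiplicative correction of order $d^{-9/(10t)}$, and combining with $\sum_c \deg_H(c) = \ell(v) \overline{\deg}(v)$ completes the argument. The main obstacle is bookkeeping: ensuring that all accumulated error terms (from $\eta\eps$, $O(\eta/\ell)$, and the KST correction) fit within $1 + \eta\eps + 5\beta_1$; this requires the hypothesis $t \leq \tilde\alpha \log d / \log \log d$ to keep $d^{-9/(10t)}$ comfortably smaller than $\beta_1 = d^{-1/(100t)}$.
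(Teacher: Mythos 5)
Your proposal is correct, and it follows the paper's high-level skeleton---the same factorization $\P[c,c'\in K,\,c'\in U]=\P[c,c'\in K]\,\P[c'\in U\mid c,c'\in K]$, the same exact formula $\P[c,c'\in K]=\keep^2(1-\eta/\ell)^{-|N_H(c)\cap N_H(c')|}$, and $K_{1,s,t}$-freeness entering only through a K\H{o}v\'ari--S\'os--Tur\'an bound on neighborhood overlaps---but you execute both sub-steps by genuinely different arguments. For the conditional factor, the paper splits on whether $k(u)\le(1-\beta_1)\keep\,\ell(u)$, controls the unlikely case with Harris's inequality (Theorem~\ref{harris}) together with Lemma~\ref{listConcentration}, and bounds the likely case by $(1-\eta/\ell)^{\psi(u)}(1+\eta/\ell)$; you instead note that $\{c,c'\in K\}$ is a cylinder event fixing only the activations on $N_H(c)\cup N_H(c')$ and the flips $\eq(c),\eq(c')$, so the events $\set{c'''\in A\cap K}$ for $c'''\in L(u)\setminus\{c'\}$ stay conditionally independent, each with conditional probability exactly $\tfrac{\eta}{\ell}\keep(1-\eta/\ell)^{-|N_H(c''')\cap X|}\ge\keep\eta/\ell$, giving $(1-\keep\eta/\ell)^{\ell(u)-1}\le(1-\eta/\ell)^{\keep(\ell(u)-1)}\le\uncolor(1+\eta\eps+O(\eta/\ell))$ via Bernoulli's inequality and \ref{item:list_assumption}, with no need for Harris. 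For the overlap sum, the paper runs the Bad/Good dichotomy with a case split on $\deg_H(c)$; you linearize $(1-\eta/\ell)^{-k}\le 1+O(k\eta/\ell)$ (valid since $k\le 2d$ and $\eta/\ell<1/(4d)$ by \ref{item:ell}) and apply Theorem~\ref{KST} once per neighborhood through the identity $\sum_{c'\in N_H(c)}|N_H(c)\cap N_H(c')|=2|E(H[N_H(c)])|$, obtaining a uniform multiplicative error $O(s^{1/t}d^{-1/t}+t/d)\le\beta_1$. Both routes fit inside the $1+\eta\eps+5\beta_1$ budget; yours is shorter and avoids two pieces of machinery, while the paper's version gives per-color control of the overlaps and a conditioning argument (Harris) that is more robust if the exact product structure of the conditioning event were lost. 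One small imprecision: $d^{-9/(10t)}\le\beta_1=d^{-1/(100t)}$ holds for every $t\ge 1$, so the hypothesis $t\le\tilde\alpha\log d/\log\log d$ is not what makes that comparison work (it is what makes $\beta_1$ itself small and is needed elsewhere); this does not affect correctness.
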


\begin{proof}
See \S \ref{sectionProofofExpectation}.
\end{proof}

    Let $d_{\max} \defeq \max\{d^{7/8}, \Delta(H)\}$. In the following lemma, we show that with high probability the quantity $|\keptedges\cap\uncoloredges|$ is not too large.

\begin{Lemma}\label{concentrationKeptUncolor}
$\P[|\keptedges\cap\uncoloredges| \geq \keep^2\,\uncolor\,\ell(v)(\overline{\deg}(v) + 8\eta\beta\,d_{\max})] \leq d^{-100}$.
\end{Lemma}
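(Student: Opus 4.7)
The plan is to establish concentration of $X \defeq |\keptedges \cap \uncoloredges|$ around its mean via Exceptional Talagrand's Inequality (Theorem \ref{ExceptionalTalagrand}). The underlying independent trials are the activation indicator of $c \in A$ and the equalizing coin flip $\eq(c)$ for every color $c \in V(H)$. Combining Lemma \ref{expectationKeptUncolor} with the trivial bound $\overline{\deg}(v) \leq \Delta(H) \leq d_{\max}$, the threshold in the statement exceeds $\E[X]$ by at least
\[
    \xi \,\defeq\, 5\,\keep^2\,\uncolor\,\ell(v)\,\beta_1\,d_{\max},
\]
which by \ref{item:ell}--\ref{item:Delta} and the definitions of $\beta_1$ and $d_{\max}$ is polynomially large in $d$ (of order $\Omega(\beta_1\,\ell\,d_{\max})=\Omega(d^{15/8-o(1)})$ in the worst case $d_{\max}=d^{7/8}$).

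For the exceptional set, I would take $\Omega^*$ to be the event that some vertex $u$ in the relevant ``second neighborhood'' $W \defeq \{L^{-1}(c') : c' \in N_H(L(v))\}$ of $v$ satisfies $|A(u)| > \log^2 d$. Since $\E[|A(u)|] \leq \eta(1+\eps) = O(1/\log d)$ and $|W| \leq |N_G(v)| = O(d\ell)$, a standard Chernoff-type upper-tail bound together with a union bound gives $\P[\Omega^*] \leq \exp(-\Omega(\log^2 d)) \leq d^{-200}$, which amply satisfies \ref{item:ET2}. To verify \ref{item:ET1} for outcomes $\omega \notin \Omega^*$, for each edge $e = cc' \in \keptedges \cap \uncoloredges$ one constructs a witness $I_e$ consisting of: (i) the $O(d)$ trials certifying $c, c' \in K$, namely $\eq(c)$, $\eq(c')$ and the activation trials of all neighbors of $c$ and $c'$ in $H$; together with (ii) a witness that $u' \defeq L^{-1}(c')$ is uncolored. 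Outside $\Omega^*$, part (ii) can be recorded in at most $O(d \log^2 d)$ trials, by including either the $\ell$ non-activation trials certifying $A(u') = \emptyset$, or, for each of the at most $\log^2 d$ activated colors in $A(u')$, an $O(d)$-sized witness of its non-keptness. Thus the total certificate has size $s \leq X(\omega) \cdot O(d\log^2 d)$.

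The principal obstacle is the Lipschitz constant. A naive count gives $\gamma = O(d\ell)$, because changing a single trial can flip the $\phi$-status (blank vs.\ non-blank) of up to $\Delta(H)+1$ underlying vertices $u$, and each such flip alters up to $|M_{v,u}| = O(\ell)$ edges in $X$ through the resulting changes in $U$-status. Combined with the above $s$, this bound is too loose to force the Talagrand tail below $d^{-100}$. To reduce the effective Lipschitz constant I would employ the random partitioning strategy outlined in \S\ref{subsec:sketch}: partition the set $W$ uniformly at random into $k$ classes $W_1, \ldots, W_k$ (with $k$ chosen as a sufficiently large polynomial in $d$) and decompose $X = X_1 + \cdots + X_k$, where $X_i$ counts the edges $cc'$ with $L^{-1}(c') \in W_i$. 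For each fixed realization of the partition, $X_i$ depends only on the trials inside $L(\{v\} \cup W_i)$ together with their neighborhoods in $H$, and a single trial change affects $X_i$ by $O(d\ell/k)$ edges in expectation. Applying Exceptional Talagrand separately to each $X_i$ with the shrunken Lipschitz constant and a proportionally smaller piece of the certificate, and then combining via a union bound over $i \leq k$, yields the required $d^{-100}$ tail on $X$. The $K_{1,s,t}$-free hypothesis plays no direct role in this step; it is invoked only in the expectation bound of Lemma \ref{expectationKeptUncolor}.
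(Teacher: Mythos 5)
Your setup (the choice of trials, the margin of roughly $5\,\keep^2\,\uncolor\,\ell(v)\,\beta_1\,d_{\max}$ above the expectation from Lemma~\ref{expectationKeptUncolor}) is fine, but the concentration argument has a genuine gap: you apply Talagrand directly to $X=|\keptedges\cap\uncoloredges|$ and therefore must \emph{certify keptness} of $c$ and $c'$, which costs the $O(d)$ non-activation trials of their neighbourhoods. This makes the certificate size $s$ of order $|S|\cdot d\log^2 d\approx d^3$ (up to logs), while the Lipschitz parameter cannot be pushed below roughly $\ell$: a single trial that flips the uncoloured status of one neighbour $u$ of $v$ can delete every counted edge of the matching $E_H[L(v),L(u)]$, i.e.\ up to $\ell(u)$ edges at once. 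With $\xi\approx\beta_1\,\ell(v)\,d_{\max}\leq d^2$, already the side condition $\xi>50\gamma\sqrt{s}$ of Theorem~\ref{ExceptionalTalagrand} fails (one has $\gamma\sqrt s\gtrsim \ell^{3/2}d\gg d^2$), and the exponent $\xi^2/(16\gamma^2 s)$ is $o(1)$, so no tail bound close to $d^{-100}$ can come out. Your random partition does not repair this: partitioning by the underlying vertex of $c'$ places each whole matching $E_H[L(v),L(u)]$ inside a single part, so the worst-case per-trial effect within that part is still about $\ell$, not $O(d\ell/k)$ (and \ref{item:ET1} needs a worst-case bound outside $\Omega^*$, not a bound ``in expectation''); moreover splitting shrinks the admissible deviation per part to about $\xi/k$ while $s_i\approx s/k$, so $\xi_i^2/(\gamma_i^2 s_i)$ only deteriorates as $k$ grows. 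Partitioning can reduce $\gamma$, but it cannot compensate for a certificate of size $\Theta(d)$ per counted edge.

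The missing idea is to never certify that a colour \emph{is} kept. The paper writes $|\keptedges\cap\uncoloredges|=|\uncoloredges|-|\uncoloredges\setminus\keptedges|$ and proves an upper tail for the first term and a lower tail for the second. Membership in these sets has cheap witnesses outside $\Omega^*$: a vertex $u$ being uncoloured is certified by $\mathsf{act}(u)$ plus one removal certificate (a \emph{single} trial) for each of the at most $C\log d$ activated colours of $L(u)$, and ``some endpoint of $cc'$ is not kept'' costs one further trial; so $s$ is only $O(\log d)$ times the number of counted objects. The quantity $|\uncoloredges|$ is then split colour-by-colour into the $|\uncoloredgesc|$, while for $|\uncoloredges\setminus\keptedges|$ the random partition is taken over the \emph{edge set} $S=E_H[L(v),N_H(L(v))]$ into $\approx d^{1/5}$ parts satisfying the deterministic caps \ref{item:P1}--\ref{item:P3} (in particular $|E_H[L(v),L(u)]\cap S_i|\lesssim \ell(u)d^{-\tau}$), which is what genuinely lowers $\gamma$ to $O(d^{1-\tau}\log^2 d)$; the degenerate regime $|S|\leq d^{2-\mu}$ is handled separately without any concentration (Lemma~\ref{lemma:smallS}). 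Note also that the exceptional set must control $|N_H(c)\cap A|$ as in \ref{item:Ex1}, not only $|A(u)|$, in order to bound how many uncolouring certificates a single activation trial can enter. Without this subtraction trick and the edge-level partition, the quantitative bookkeeping in your proposal does not close.
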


\begin{proof}
See \S \ref{sectionProofofConcentration}.
\end{proof}

    Using Lemmas \ref{expectationKeptUncolor} and \ref{concentrationKeptUncolor}, we can now prove the following:
\begin{Lemma}\label{degreeConcentration}
    $\P[\nd > \keep\,\uncolor\,\overline{\deg}(v) + 15\eta\beta\,\keep\,\uncolor\,d_{\max}] \leq d^{-75}$.
\end{Lemma}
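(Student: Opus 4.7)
The plan is to combine the two concentration results Lemma~\ref{listConcentration} and Lemma~\ref{concentrationKeptUncolor} by a union bound, and then to divide the upper bound on the numerator $|\keptedges \cap \uncoloredges|$ by the lower bound on the denominator $k(v)$ in the definition $\nd = |\keptedges \cap \uncoloredges|/k(v)$.

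First I would introduce the two bad events
\[
    \mathcal{B}_1 \defeq \set{k(v) < (1 - \beta_1)\,\keep\,\ell(v)}, \qquad \mathcal{B}_2 \defeq \set{|\keptedges \cap \uncoloredges| \geq \keep^2\,\uncolor\,\ell(v)\bigl(\overline{\deg}(v) + (\eta\eps + 10\beta_1)d_{\max}\bigr)},
\]
and observe that by Lemmas~\ref{listConcentration} and~\ref{concentrationKeptUncolor}, a union bound gives $\P[\mathcal{B}_1 \cup \mathcal{B}_2] \leq \exp(-d^{1/10}) + d^{-100} \leq d^{-75}$ for $d$ large enough. Note also that if $k(v) = 0$ then $\nd = 0$ by convention (one may treat that degenerate case separately, or subsume it in $\mathcal{B}_1$).

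Now assume neither $\mathcal{B}_1$ nor $\mathcal{B}_2$ holds. Taking the quotient and using $1/(1-\beta_1) \leq 1 + 2\beta_1$ for small $\beta_1$,
\[
    \nd \,\leq\, \frac{\keep^2\,\uncolor\,\ell(v)\bigl(\overline{\deg}(v) + (\eta\eps + 10\beta_1)d_{\max}\bigr)}{(1 - \beta_1)\,\keep\,\ell(v)} \,\leq\, (1 + 2\beta_1)\,\keep\,\uncolor\,\bigl(\overline{\deg}(v) + (\eta\eps + 10\beta_1)d_{\max}\bigr).
\]
Since $d_{\max} \geq \Delta(H) \geq \overline{\deg}(v)$, the ``extra'' term $2\beta_1 \keep\,\uncolor\,\overline{\deg}(v)$ coming from the expansion can be bounded by $2\beta_1 \keep\,\uncolor\,d_{\max}$, and $2\beta_1(\eta\eps + 10\beta_1) \leq 3\beta_1$ for $d$ large. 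Combining,
\[
    \nd \,\leq\, \keep\,\uncolor\,\overline{\deg}(v) + \bigl(\eta\eps + 10\beta_1 + 2\beta_1 + 3\beta_1\bigr)\keep\,\uncolor\,d_{\max} \,\leq\, \keep\,\uncolor\,\overline{\deg}(v) + (\eta\eps + 25\beta_1)\keep\,\uncolor\,d_{\max},
\]
which is exactly the claimed bound. Hence the probability of the event in question is at most $\P[\mathcal{B}_1 \cup \mathcal{B}_2] \leq d^{-75}$.

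The main conceptual obstacle is already handled by Lemmas~\ref{expectationKeptUncolor} and~\ref{concentrationKeptUncolor} (proved in later sections); here the only thing to watch is that the constants line up after dividing by $k(v)$ and converting the multiplicative error $(1+2\beta_1)$ on the main term $\overline{\deg}(v)$ into an additive error against $d_{\max}$. That step uses $\overline{\deg}(v) \leq d_{\max}$ in a crucial way, which is why the statement of the lemma is phrased with $d_{\max}$ rather than $\overline{\deg}(v)$ on the right-hand side.
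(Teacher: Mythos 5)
Your proof is correct and follows essentially the same route as the paper: a union bound over the bad events from Lemmas~\ref{listConcentration} and~\ref{concentrationKeptUncolor}, then dividing the upper bound on $|\keptedges\cap\uncoloredges|$ by the lower bound on $k(v)$ and absorbing the $(1+2\beta_1)$ multiplicative error into the $d_{\max}$ term via $\overline{\deg}(v)\leq\Delta(H)\leq d_{\max}$. The constant bookkeeping ($\eta\eps+15\beta_1\leq\eta\eps+25\beta_1$) is even slightly more generous than needed, so nothing further is required.
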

\begin{proof}
    With probability at least
    \[
        1 - \exp\left(-d^{1/10}\right) - d^{-100} \,\geq\, 1 - d^{-75},
    \]
    the events described in Lemmas \ref{listConcentration} and \ref{concentrationKeptUncolor} do not happen. Thus, with probability at least $1 - d^{-75}$,
\begin{align*}
    \nd &= \frac{|\keptedges\cap \uncoloredges|}{k(v)} \\
    &\leq \frac{\keep^2\,\uncolor\,\ell(v)\,\overline{\deg}(v) + 8\eta\beta\,\keep^2\,\uncolor\,\ell(v)d_{\max}}{(1-\eta\beta)\keep\,\ell(v)} \\
    &\leq \frac{\keep\,\uncolor\,\overline{\deg}(v)}{(1-\eta\beta)} + \frac{8\eta\beta\,\keep\,\uncolor\, d_{\max}}{(1-\eta\beta)} \\
    &\leq \keep\,\uncolor\, \overline{\deg}(v)(1+ 2\eta\beta) + 8\eta\beta(1+2\eta\beta)\keep\,\uncolor \,d_{\max} \\
    &\leq \keep\,\uncolor\, \overline{\deg}(v) + 15\eta\beta\,\keep\,\uncolor \,d_{\max}. \qedhere
\end{align*}
\end{proof}

We can now combine Lemmas \ref{listConcentration} and \ref{degreeConcentration} to prove the desired bound on $\delta'(v)$.
\begin{Lemma}\label{deltaBound}$\P\left[\delta'(v) \leq (1+\beta')d'\right] \geq 1-d^{-50}$.
\end{Lemma}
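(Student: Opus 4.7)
The plan is to first derive a purely deterministic bound on $\delta'(v)$ in terms of $k(v)$ and $\nd$, and then combine it with the high-probability estimates from Lemmas \ref{listConcentration} and \ref{degreeConcentration}, interpreted through the hypothesis \ref{item:averaged} in the form of \eqref{eqn:avg_delta}.

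For the deterministic step, I would exploit the defining property of $L'(v)$ in Step \ref{item:step5} of the \hyperlink{procedure}{Coloring Procedure}: every $c \in K(v) \setminus L'(v)$ satisfies $|N_H(c) \cap K \cap U| > 2d'$. Since $V(H') \subseteq K \cap U$ and $\sum_{c \in K(v)}|N_H(c) \cap K \cap U| = k(v)\nd$, this yields
\[
\ell'(v)\,\overline{d}(v) \,=\, \sum_{c \in L'(v)}|N_H(c) \cap V(H')| \,\leq\, k(v)\nd - 2d'(k(v) - \ell'(v)).
\]
Dividing by $\ell'$ and substituting into $\delta'(v) = \ell'(v)\overline{d}(v)/\ell' + (1 - \ell'(v)/\ell')\,2d'$, the $\ell'(v)$ terms cancel, leaving the clean bound
\[
\delta'(v) \,\leq\, \frac{k(v)}{\ell'}(\nd - 2d') + 2d'.
\]

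For the probabilistic step, let $\mathcal{E}(v)$ denote the intersection of the complements of the bad events in Lemmas \ref{listConcentration} and \ref{degreeConcentration}; by a union bound, $\P[\mathcal{E}(v)] \geq 1 - \exp(-d^{1/10}) - d^{-75} \geq 1 - d^{-50}$. Under $\mathcal{E}(v)$ we have $k(v) \geq (1-\beta_1)\lambda(v)\ell'$ and $\nd \leq \keep\,\uncolor\,\overline{\deg}(v) + (\eta\eps + 25\beta_1)\keep\,\uncolor\,d_{\max}$. The key trick is to recast \ref{item:averaged} through the algebraic identity $2d - \overline{\deg}(v) = (2d - \delta(v))/\lambda(v)$, which combined with \eqref{eqn:avg_delta} gives $2d - \overline{\deg}(v) \geq (1-\eps)d/\lambda(v)$. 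Together with $d_{\max} \leq 2d$ from \ref{item:Delta}, this yields
\[
2d' - \nd \,\geq\, \keep\,\uncolor\bigl[(1-\eps)d/\lambda(v) - 2(\eta\eps + 25\beta_1)d\bigr] \,>\, 0,
\]
since $\lambda(v) \leq 1+\eps \leq 11/10$ and $\eta\eps, \beta_1$ are small. Plugging the lower bound on $k(v)$ into the deterministic inequality therefore gives $\delta'(v) \leq 2d' - (1-\beta_1)\lambda(v)(2d' - \nd)$, and a short arithmetic simplification using $\lambda(v) \leq 1+\eps$, $\eps \leq 1/10$, $\eta < 1/\log d$, and $\beta_1 = \beta^2 \ll \beta$ \ep{for $d$ large in terms of $t$} reduces this to $(1 + (1+3\eta)\eps + \beta)d'$.

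The main obstacle lies in the first step. A naive attempt to bound $\overline{d}(v)$ directly is hampered because its denominator $\ell'(v)$ fluctuates with the number of high-degree colors discarded on Step \ref{item:step5}. The deterministic inequality above circumvents this by exploiting exactly how much each discarded color ``pays'' \ep{at least $2d'$ potential neighbors}, causing the $\ell'(v)$ terms to cancel. A subsidiary subtlety is the verification that $\nd \leq 2d'$ under $\mathcal{E}(v)$, so that the lower bound on $k(v)/\ell'$ is the useful one; this is precisely where replacing $\overline{\deg}(v)$ by an expression involving the averaged quantity $\delta(v)$ pays off, since \eqref{eqn:avg_delta} yields a $\lambda(v)$-independent estimate.
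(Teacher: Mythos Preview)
Your proposal is correct and follows essentially the same route as the paper. The deterministic inequality $\delta'(v) \leq \frac{k(v)}{\ell'}(\nd - 2d') + 2d'$ is exactly the paper's bound $\delta'(v) \leq \hat{\delta}(v)$ (where $\hat{\delta}(v) \defeq \hat{\lambda}(v)\nd + (1-\hat{\lambda}(v))2d'$ with $\hat{\lambda}(v) = k(v)/\ell'$), derived in the same way from Step~\ref{item:step5}; the probabilistic step likewise combines Lemmas~\ref{listConcentration} and~\ref{degreeConcentration} with \eqref{eqn:avg_delta}, the only cosmetic difference being that you use the identity $2d - \overline{\deg}(v) = (2d - \delta(v))/\lambda(v)$ directly while the paper first records $\overline{\deg}(v) \leq (1+2\eps)d$ and then substitutes.
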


\begin{proof}\stepcounter{ForClaims} \renewcommand{\theForClaims}{\ref{deltaBound}}

Let us define some new parameters similar to $\lambda$ and $\delta$:
\begin{align*}
    \hat{\lambda}(v) \defeq \frac{k(v)}{\ell'} \qquad \text{and} \qquad  \hat{\delta}(v) \defeq \hat{\lambda}(v)\,\nd + (1-\hat{\lambda}(v))2d'.
\end{align*}
    Note that
    \begin{align*}
        \hat{\delta}(v) - \delta'(v) \,&=\, \frac{1}{\ell'} \, \left(k(v) \nd - \ell'(v) \overline{d}(v) - 2d'(k(v) - \ell'(v))\right) \\
        &=\, \frac{1}{\ell'} \, \left(|\keptedges \cap \uncoloredges| - \sum_{c\in L'(v)}|N_H(c) \cap V(H')| - 2d'(k(v) - \ell'(v))\right) \\
        &\geq\, \frac{1}{\ell'} \, \left(|\keptedges \cap \uncoloredges| - \sum_{c\in L'(v)}|N_H(c) \cap K \cap U| - 2d'(k(v) - \ell'(v))\right).
    \end{align*}
    Since every color $c$ that is in $K(v)$ but not in $L'(v)$ was removed because $|N_H(c) \cap K \cap U| > 2d'$,
    \[
        |\keptedges \cap \uncoloredges| - \sum_{c\in L'(v)}|N_H(c) \cap K \cap U| \,\geq\, 2d'(k(v) - \ell'(v)).
    \]
    It follows that $\delta'(v) \leq \hat{\delta}(v)$. Now we proceed to bound $\hat{\delta}(v)$:

\begin{claim}\label{claim:deltahat}
$\P\left[\hat{\delta}(v) \leq \lambda(v)\,\nd + (1-\lambda(v))2d' + 3\eta\beta\,d'\right] \geq 1 - d^{-60}.$
\end{claim}
\begin{claimproof} Let us assume that the events described in Lemmas \ref{listConcentration} and \ref{degreeConcentration} do not occur, which happens with probability at least
\[
    1 - \exp\left(-d^{1/10}\right) - d^{-75} \,=\, 1 - d^{-60}.
\]
From Lemma~\ref{listConcentration}, we get
\[\hat{\lambda}(v) \,=\, \frac{k(v)}{\ell'} \,\geq\, \frac{(1-\eta\beta)\keep\,\ell(v)}{\keep\,\ell} \,\geq\, (1-\eta\beta)\lambda(v).\]
By definition of $\lambda(v)$ and since $\overline{\deg}(v) \leq \Delta(H) \leq 2d$, we have
\begin{align*}
    \delta(v) &= \lambda(v)\left(\overline{\deg}(v) - 2d\right) + 2d \\
    &\geq (1+\beta)\left(\overline{\deg}(v) - 2d\right) + 2d \\
    &= (1+\beta)\overline{\deg}(v) - 2\beta\,d.
\end{align*}
It follows from \eqref{eqn:avg_delta} that
\[\overline{\deg}(v) \leq \frac{\delta(v)}{1+\beta} + \frac{2\beta \,d}{1+\beta} \leq (1 + 2\beta)d.\]
From the above inequality along with Lemma \ref{degreeConcentration}, we obtain
\begin{align*}
    \nd &\leq \keep\,\uncolor\,\overline{\deg}(v) + 15\eta\beta\,\keep\,\uncolor\,d_{\max} \\
    &\leq (1+2\beta) \, \keep\,\uncolor\,d + 30\eta\beta\,\keep\,\uncolor\,d \\
    &\leq (1+2\beta(1+15\eta))d' \\
    &\leq 2d'.
\end{align*}
The last inequality follows since $\beta \leq 1/10$. Finally, we have: 
\begin{align*}
    \hat{\delta}(v) &= \hat{\lambda}(v)\,\nd + (1-\hat{\lambda}(v))2d' \\
    &\leq (1-\eta\beta)\lambda(v)\,\nd + (1-(1-\eta\beta)\lambda(v))2d' \\
    &= \lambda(v)\,\nd + (1-\lambda(v))2d' + \eta\beta\lambda(v)(2d' - \nd) \\
    &\leq \lambda(v)\,\nd + (1-\lambda(v))2d' + 2(1+\beta)\eta\beta\, d' \\
    &\leq \lambda(v)\,\nd + (1-\lambda(v))2d' + 3\eta\beta\, d',
\end{align*}
    as desired.
\end{claimproof}

From Lemmas \ref{listConcentration} and \ref{degreeConcentration} and Claim~\ref{claim:deltahat}, we conclude that with probability at least
\[
    1 - \exp\left(-d^{1/10}\right) - d^{-75} - d^{-60} \,\geq\, 1 - d^{-50},
\]
the following holds:
\begin{align*}
    \delta'(v) &\leq \hat{\delta}(v) \\
    &\leq \lambda(v)\,\nd + (1-\lambda(v))2d' + 3\eta\beta\, d' \\
    &\leq \lambda(v)\left(\keep\,\uncolor\,\overline{\deg}(v) + 15\eta\beta\,\keep\,\uncolor\,d_{\max}\right) + (1-\lambda(v))2d' + 3\eta\beta\, d' \\
    &\leq \lambda(v)\left(\keep\,\uncolor\,\overline{\deg}(v) + 30\eta\beta\,d'\right) + \left(1-\lambda(v)\right)2d' + 3\eta\beta\, d' \\
    &= \lambda(v)\keep\,\uncolor\,\overline{\deg}(v) + 30\eta\beta\,\lambda(v)\,d' + \left(1-\lambda(v)\right)2\,\keep\,\uncolor\, d + 3\eta\beta\, d' \\    
    &\leq \keep\,\uncolor \left(\lambda(v)\,\overline{\deg}(v) + (1-\lambda(v))2d\right) + 36\eta\beta\,d' \\
    &= \keep\,\uncolor\,\delta(v) + 36\eta\beta\,d' \\
    &\leq (1 + \beta')d'.
\end{align*}
    This completes the proof of Lemma \ref{deltaBound}.
\end{proof}

We are now ready to finish the proof of Lemma \ref{iterationTheorem} \ep{modulo Lemmas~\ref{expectationKeptUncolor} and \ref{concentrationKeptUncolor}}.

\begin{proof}[Proof of Lemma \ref{iterationTheorem}]
    We perform the \hyperlink{procedure}{Coloring Procedure} and define the following random events for each $v \in V(G)$: 
\begin{enumerate}
    \item $A_v \defeq \set{\ell'(v) \leq (1+\beta')}$,
    \item $B_v \defeq \set{\delta'(v) \geq (1+\beta')d'}$.
\end{enumerate}
We will use the \hyperref[LLL]{\LLL} \ep{Theorem \ref{LLL}}. By Lemmas \ref{listConcentration} and \ref{deltaBound}, we have:
\[\P[A_v] \leq \exp\left(-d^{1/10}\right) \leq d^{-50}, \qquad
\P[B_v] \leq d^{-50}.\]
Let $p \defeq d^{-50}$.
    Note that the events $A_v$, $B_v$ are mutually independent from the events of the form $A_u$, $B_u$, where $u \in V(G)$ is at distance more than $4$ from $v$. Since $\Delta(G) \leq 2(1+\beta)\ell d$, there are at most $2(2(1+\beta)\ell d)^4 \leq d^{10}$ events corresponding to the vertices at distance at most $4$ from $v$. So we let $d_{LLL} \defeq d^{10}$ and observe that $4pd_{LLL} = 4 d^{-40} < 1$. By the \hyperref[LLL]{\LLL}, with positive probability none of the events $A_v$, $B_v$ occur. By Lemma~\ref{lemma:deltaprime}, this implies that, with positive probability, the output of the \hyperlink{procedure}{Coloring Procedure} satisfies the conclusion of Lemma~\ref{iterationTheorem}. 
\end{proof}

\section{Proof of Lemma \ref{expectationKeptUncolor}}\label{sectionProofofExpectation}

In this section, we prove Lemma \ref{expectationKeptUncolor}, which we restate below for convenience (see \S \ref{sectionProofOfIterationTheorem} for the definitions of $E_K(u)$ and $E_K(v)$):

\begin{Lemma*}[\ref{expectationKeptUncolor}]
$\E[|\keptedges\cap\uncoloredges|] \leq \keep^2\,\uncolor\,\ell(v)\,\overline{\deg}(v)(1+6\eta\beta)$.
\end{Lemma*}

The following lemma provides an upper bound on $\E[|\keptedges|]$ and is the only part of the proof that requires $K_{1,s,t}$-freeness. Recall that
\[
    \keptedges = \{cc'\in E(H)\,:\, c\in L(v),\ c'\in N_H(c), \text{ and } c,c'\in K\}.
\]

\begin{Lemma}\label{keptExpectation}

$\E[|\keptedges|] \leq \keep^2\,\ell(v)\,\overline{\deg}(v)(1+\eta\beta)$.

\end{Lemma}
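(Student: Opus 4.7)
The plan is to compute $\P[c, c' \in K]$ exactly for each edge $cc' \in E(H)$ with $c \in L(v)$, apply linearity, and then bound the resulting error using $K_{1,s,t}$-freeness. First I would decompose $\set{c, c' \in K}$ into three mutually independent events: no color in $N_H(c) \cup N_H(c')$ is activated, $\eq(c) = 1$, and $\eq(c') = 1$. Independence of the activations gives the first event probability $(1 - \eta/\ell)^{|N_H(c) \cup N_H(c')|}$; the definition of the equalizing coin flips contributes $\keep^2/(1-\eta/\ell)^{\deg_H(c) + \deg_H(c')}$ for the other two. Inclusion-exclusion on $|N_H(c) \cup N_H(c')|$ then yields the clean formula
\[
    \P[c, c' \in K] \,=\, \keep^2 \left(1 - \frac{\eta}{\ell}\right)^{-|N_H(c) \cap N_H(c')|},
\]
exposing the count of common neighbors as the only source of dependence.

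Summing by linearity gives
\[
    \E[|\keptedges|] \,=\, \keep^2 \sum_{c \in L(v)} \sum_{c' \in N_H(c)} \left(1 - \frac{\eta}{\ell}\right)^{-|N_H(c) \cap N_H(c')|}.
\]
Since $|N_H(c) \cap N_H(c')| \leq \Delta(H) \leq 2d$ and $\eta/\ell \leq 1/(4d)$ by assumptions \ref{item:Delta} and \ref{item:ell}, the product of the exponent and the base is at most $1/2$, so a routine Taylor estimate of the form $(1-x)^{-k} \leq 1 + O(kx)$ applies. Combined with the handshake identity $\sum_{c' \in N_H(c)} |N_H(c) \cap N_H(c')| = 2|E(H[N_H(c)])|$, this reduces the task to bounding the sum $\sum_{c \in L(v)} |E(H[N_H(c)])|$.

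This last step is where $K_{1,s,t}$-freeness enters, and it is the main technical point of the lemma. For any color $c$, the induced subgraph $H[N_H(c)]$ must be $K_{s,t}$-free, since $c$ is adjacent to every vertex of $N_H(c)$ and so a copy of $K_{s,t}$ inside $N_H(c)$ together with $c$ as apex would produce a $K_{1,s,t}$ in $H$. Passing to the bipartite double cover (which preserves $K_{s,t}$-freeness and doubles the edge count), I would invoke the K\H{o}v\'ari--S\'os--Tur\'an bound (Theorem~\ref{KST}) to obtain $|E(H[N_H(c)])| = O\!\left(s^{1/t} \deg_H(c)^{2-1/t} + t\, \deg_H(c)\right)$. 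Pulling a factor of $(2d)^{1 - 1/t}$ out of the first term and summing over $c \in L(v)$ gives $\sum_c |E(H[N_H(c)])| = O\!\left(s^{1/t} d^{1-1/t} + t\right) \cdot \ell(v) \overline{\deg}(v)$.

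Putting everything together, the correction factor ends up being $O\!\left((s^{1/t} d^{1-1/t} + t)/d\right) = O(s^{1/t} d^{-1/t}) + O(t/d)$. The main obstacle is verifying that this quantity is dominated by $\beta_1 = d^{-1/(100t)}$ across the full range of admissible parameters; the hypotheses $s \leq d^{1/10}$ and $t \leq \tilde{\alpha}\log d/\log\log d$ of assumption~\ref{item:t}, combined with a sufficiently small choice of $\tilde{\alpha}$ and sufficiently large $d$, are precisely what make this estimate go through.
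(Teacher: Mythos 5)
Your proposal is correct, and it starts exactly where the paper does: the identity $\P[c,c'\in K]=\keep^2\left(1-\eta/\ell\right)^{-|N_H(c)\cap N_H(c')|}$ and the reduction to bounding $\sum_{c'\in N_H(c)}\left(1-\eta/\ell\right)^{-|N_H(c)\cap N_H(c')|}$ coincide with the first display of the paper's proof. You diverge in how that sum is controlled. The paper keeps the multiplicative factors intact and runs a two-level case analysis: colors $c$ with $\deg_H(c)<d^{1-\theta}$ are handled trivially, and for the remaining $c$ the neighbors are split into $\Bad$ and $\Good$, with the K\H{o}v\'ari--S\'os--Tur\'an theorem applied to an auxiliary bipartite graph between $\Bad$ and $N_H(c)$ to show $|\Bad|\leq\deg_H(c)^{1-\theta}$, the few bad colors being absorbed via $(1-\eta/\ell)^{-2d}=1/\keep\leq 2$. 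You instead linearize: since $|N_H(c)\cap N_H(c')|\leq\Delta(H)\leq 2d$ by \ref{item:Delta} and $\eta/\ell<1/(4d)$ by \ref{item:ell}, the exponent times the base is at most $1/2$, so $(1-\eta/\ell)^{-k}\leq 1+O(k\eta/\ell)$ with an absolute constant, and the whole correction reduces to $\sum_{c'\in N_H(c)}|N_H(c)\cap N_H(c')|=2|E(H[N_H(c)])|$, which you bound by K\H{o}v\'ari--S\'os--Tur\'an applied to the bipartite double cover of the $K_{s,t}$-free graph $H[N_H(c)]$ with $m=n=\deg_H(c)$. Your closing parameter check is right: the correction is $O(s^{1/t}d^{-1/t}+t/d)\leq O(d^{-9/(10t)})+O(t/d)\leq\beta_1$ for $d$ large under \ref{item:t}. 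If you write this up, do spell out the one-line justification that the two copies of a color in the double cover are nonadjacent, so a complete bipartite subgraph of the cover cannot use both copies and hence projects to a genuine $K_{s,t}$ in $H[N_H(c)]$; this mirrors the paper's argument for its auxiliary graph $B$. Overall your route is shorter and avoids the good/bad dichotomy; the paper's multiplicative treatment leans only on the crude bound $1/\keep\leq 2$ for the extreme factors and is thus less tied to the precise relation between $\eta/\ell$ and $1/d$, but under the stated hypotheses both arguments yield the same $(1+\beta_1)$ factor.
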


\begin{proof}\stepcounter{ForClaims} \renewcommand{\theForClaims}{\ref{keptExpectation}}
    Observe the following:
    \[|\keptedges| = \sum\limits_{c\in L(v)}\sum\limits_{c'\in N_H(c)}\mathbbm{1}_{\{c,c'\in K\}}.\]
    Since all activations and equalizing coin flips occur independently, we have:
\begin{align*}
    \E[|\keptedges|] &= \sum\limits_{c\in L(v)}\sum\limits_{c'\in N_H(c)}\P[c,c'\in K] \\
    &= \sum\limits_{c\in L(v)}\sum\limits_{c'\in N_H(c)}\P[\eq(c) = 1]\,\P[\eq(c') = 1]\left(1 - \frac{\eta}{\ell}\right)^{|N_H(c)\cup N_H(c')|} \\
    &= \keep^2\sum\limits_{c\in L(v)}\sum\limits_{c'\in N_H(c)}\left(1 - \frac{\eta}{\ell}\right)^{-|N_H(c)\cap N_H(c')|}.
\end{align*}
We consider two separate cases depending on the degree of $c\in L(v)$ in $H$.

\smallskip

\textbf{Case 1:} $\deg_H(c) < d^{1-\theta}$, where $\theta \defeq 1/(5t)$. In this case, we have the following:
\begin{align*}
    \sum\limits_{c'\in N_H(c)}\left(1 - \frac{\eta}{\ell}\right)^{-|N_H(c)\cap N_H(c')|} &\leq \sum\limits_{c'\in N_H(c)}\left(1 - \frac{\eta}{\ell}\right)^{-d^{1-\theta}} \\
    &\leq \deg_H(c)\left(1 - \frac{\eta}{\ell}\,d^{1-\theta}\right)^{-1} \\
    &\leq \deg_H(c)\left(1 - d^{-\theta}/4\right)^{-1} \\
    &\leq \deg_H(c)\left(1 + d^{-\theta}\right).
\end{align*}

\textbf{Case 2:} $\deg_H(c) \geq d^{1-\theta}$. Here we will invoke the $K_{1,s,t}$-freeness of $H$. We say:
\begin{align*}
    \text{$c' \in N_H(c)$ is }&\text{\emphdef{bad} if $c'$ has at least $\deg_H(c)^{1-\theta}$ neighbors in $N_H(c)$,} \\
    &\text{\emphdef{good} otherwise.}
\end{align*}
Let $\Bad$ and $\Good$ be the sets of bad and good colors in $N_H(c)$ respectively. The following claim shows that $|\Bad|$ is relatively small.

\begin{claim}\label{claim:Bad}
$|\Bad| \leq \deg_H(c)^{1-\theta}$. 
\end{claim}

\begin{claimproof}
Since every bad color has at least $\deg_H(c)^{1-\theta}$ neighbors in $N_H(c)$, we have
\[|\Bad| \,\leq\, \frac{2\big| E(H[N_H(c)])\big|}{\deg_H(c)^{1-\theta}} \,\leq\, \frac{2\deg_H(c)^2}{\deg_H(c)^{1-\theta}} \,=\, 2\deg_H(c)^{1+\theta}.\]
Since $H$ is $K_{1,s,t}$-free, $H[N_H(c)]$ must be $K_{s,t}$-free. Let $B$ be the bipartite graph with $V(B) = X \sqcup Y$ where $X$ is a copy of $\Bad$, and $Y$ is a copy of $N_H(c)$; place an edge between $x \in X$ and $y \in Y$ if and only if $x$ and $y$ correspond to the endpoints of an edge in $N_H(c)$.
Note that if a color $c' \in N_H(c)$ is bad, then $B$ contains two copies of $c'$: one in $X$ and the other in $Y$. However, these two copies cannot be adjacent to each other in $B$. Thus, for any $c' \in N_H(c)$, if a subgraph of $B$ is isomorphic to $K_{s,t}$, then this subgraph can contain at most one copy of $c'$. Since $H[N_H(c)]$ is $K_{s,t}$-free, we conclude that $B$ is $K_{s,t}$-free as well. If we set $\hat{\beta} = 1/t = 5\theta$, $m = |\Bad|$, $n = \deg_H(c)$, then, by the \hyperref[KST]{K\H{o}v\'ari--Sós--Turán theorem} \ep{Theorem~\ref{KST}},
\[|E(B)| \,\leq\, s^{\hat{\beta}}|\Bad|^{1-\hat{\beta}}\deg_H(c) + t|\Bad| \leq 4s^{\hat{\beta}}\deg_H(c)^{2+\theta-\hat{\beta}-\hat{\beta}\theta}.\]
Furthermore, by the definition of $\Bad$, we have
\[|E(B)| \,\geq\, |\Bad|\deg_H(c)^{1 - \theta}.\]
Putting together the previous inequalities, we get
\[|\Bad| \leq 4s^{\hat{\beta}}\deg_H(c)^{1 + 2\theta - \hat{\beta}}.\]
Finally, note that for $d$ large enough, we have 
\[s \,\leq\, d^{1/10} \,\leq\, d^{(1-\theta)/5} \,\leq\, \deg_H(c)^{1/5},\]
which implies
\[|\Bad| \,\leq\, 4\deg_H(c)^{1-2\theta} \,\leq\, \deg_H(c)^{1-\theta},\]
as desired.
\end{claimproof}

With Claim~\ref{claim:Bad} in hand, we can complete our analysis for Case 2. We write
\begin{align*}
    \sum\limits_{c'\in N_H(c)}\left(1 - \frac{\eta}{\ell}\right)^{-|N_H(c)\cap N_H(c')|} &= \sum\limits_{c'\in \Bad}\left(1 - \frac{\eta}{\ell}\right)^{-|N_H(c)\cap N_H(c')|} + \sum\limits_{c'\in \Good}\left(1 - \frac{\eta}{\ell}\right)^{-|N_H(c)\cap N_H(c')|} \\
    &\leq \sum\limits_{c'\in \Bad}\left(1 - \frac{\eta}{\ell}\right)^{-\deg_H(c)} + \sum\limits_{c'\in \Good}\left(1 - \frac{\eta}{\ell}\right)^{-\deg_H(c)^{1-\theta}} \\
    &\leq |\Bad|\left(1 - \frac{\eta}{\ell}\right)^{-2d} + (\deg_H(c) - |\Bad|)\left(1 - \frac{\eta}{\ell}\right)^{-\deg_H(c)^{1-\theta}}.
\end{align*}
The last expression increases with $|\Bad|$ and so we may apply the upper bound on $|\Bad|$ from Claim~\ref{claim:Bad}. Furthermore, $\left(1 - \frac{\eta}{\ell}\right)^{-2d} = 1/\keep \leq 2$ by \ref{item:ell}, so we now have
\begin{align*}
    \sum\limits_{c'\in N_H(c)}\left(1 - \frac{\eta}{\ell}\right)^{-|N_H(c)\cap N_H(c')|} &\leq 2\deg_H(c)^{1-\theta} + (\deg_H(c) - \deg_H(c)^{1-\theta})\left(1 - \frac{\eta}{\ell}\deg_H(c)^{1-\theta}\right)^{-1} \\
    &\leq \deg_H(c)\left(2\deg_H(c)^{-\theta} + \dfrac{1 - \deg_H(c)^{-\theta}}{1 - \frac{\eta}{\ell}\,2d\,\deg_H(c)^{-\theta}}\right) \\
    &\leq \deg_H(c)(d^{-\theta/2} + 1).
\end{align*}
Putting both cases together and since $\eta\beta > d^{-\theta/2}$ by \eqref{etabeta}, we conclude that:
\begin{align*}
    \E[|\keptedges|] &\leq \keep^2\left(\sum\limits_{\text{Case 1}}\deg_H(c)(1 + d^{-\theta}) + \sum\limits_{\text{Case 2}}\deg_H(c)(1+d^{-\theta/2})\right) \\
    &\leq \keep^2\sum\limits_{c\in L(v)}\deg_H(c)(1 + \eta\beta) \\
    &= \keep^2\,\ell(v)\,\overline{\deg}(v)(1+\eta\beta). \qedhere
\end{align*}
\end{proof}

    Next, we bound $\E[|\keptedges\cap\uncoloredges|]$ in terms of $\E[|\keptedges|]$. Recall that
    \[
        \uncoloredges = \{cc'\in E(H)\,:\, c\in L(v),\ c'\in N_H(c), \text{ and } c' \in U\}.
    \]

\begin{Lemma}\label{keptUncolorExpectation}
$\E[|\keptedges\cap\uncoloredges|] \leq \uncolor\,(1+4\eta\beta)\E[|\keptedges|]$.
\end{Lemma}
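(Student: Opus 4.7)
The plan is to reduce the lemma to a pointwise bound on a conditional probability. Since $\E[|\keptedges \cap \uncoloredges|] = \sum_{cc'} \P[c, c' \in K]\,\P[L^{-1}(c') \notin \dom(\phi) \mid c, c' \in K]$ and $\E[|\keptedges|] = \sum_{cc'} \P[c, c' \in K]$, it suffices to show, for each edge $cc' \in E(H)$ with $c \in L(v)$, that
\[
    \P[L^{-1}(c') \notin \dom(\phi) \mid c, c' \in K] \,\leq\, \uncolor \cdot (1 + 2\beta_1 + \eta\eps).
\]
Fix such a pair, set $u \defeq L^{-1}(c')$, $E_1 \defeq \{c, c' \in K\}$, and $W \defeq N_H(c) \cup N_H(c')$. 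Since $c \in K$ forces $c' \notin A$ (as $c' \in N_H(c)$), the event $u \notin \dom(\phi)$ under $E_1$ reduces to $A(u) \cap (K(u) \setminus \{c'\}) = \emptyset$.

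The crucial structural observation is that the neighborhoods $N_H(c'')$ for $c'' \in L(u)$ are pairwise disjoint: a common neighbor $c^*$ of distinct $c''_1, c''_2 \in L(u)$ would be matched in $M_{u, L^{-1}(c^*)}$ to both $c''_1$ and $c''_2$, violating the matching property in the definition of DP-cover. Combined with the independence of equalizing coin flips, this implies that the events $\{c'' \in K\}_{c'' \in L(u) \setminus \{c'\}}$ are mutually independent even after conditioning on $E_1$, with
\[
    \P[c'' \in K \mid E_1] \,=\, \frac{\keep}{(1 - \eta/\ell)^{r(c'')}}, \qquad r(c'') \defeq |N_H(c'') \cap W|.
\]
Moreover, $A(u) \cap (L(u) \setminus \{c'\})$ depends only on activations of $L(u) \setminus \{c'\}$, a set disjoint from $W$ and from every $N_H(c'')$, and is therefore independent of all of the above.

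Putting these together and using the standard moment generating function identity for independent Bernoullis,
\begin{align*}
    \P[u \notin \dom(\phi) \mid E_1]
    &\,=\, \E\bigl[(1 - \eta/\ell)^{|K(u) \setminus \{c'\}|} \bigm| E_1\bigr] \\
    &\,=\, \prod_{c'' \in L(u) \setminus \{c'\}} \left(1 - \frac{(\eta/\ell)\,\keep}{(1 - \eta/\ell)^{r(c'')}}\right) \\
    &\,\leq\, \left(1 - \frac{\keep\,\eta}{\ell}\right)^{|L(u)| - 1},
\end{align*}
using $(1 - \eta/\ell)^{-r(c'')} \geq 1$ in the last step. It then remains to compare the last expression with $\uncolor = (1 - \eta/\ell)^{\keep\ell/2}$: a Taylor expansion of $\ln(1-x)$ together with $|L(u)| \geq (1-\eps)\ell/2$ from assumption~\ref{item:list_assumption} shows that the gap in the exponents is at most $\keep(\eta/\ell)(\ell/2 - |L(u)| + 1) + O((\eta/\ell)^2\ell) \leq \eta\eps/2 + O(\eta/\ell)$, and the hypotheses $\ell \geq 4\eta d$ and $\beta_1 = d^{-1/(100t)}$ force $\eta/\ell \ll \beta_1$, absorbing the error. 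Exponentiating yields the target bound.

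The only conceptually nontrivial ingredient is the disjoint-neighborhoods observation: it turns what at first glance looks like a complicated correlated expectation (one that would otherwise require concentration arguments, as in the general analysis that motivates the introduction of $\overline{\deg}_{\mathcal{H}'}$) into an exact product of simple factors. Everything after that is a routine Taylor-expansion comparison whose only subtlety is checking that the constants line up so that the lower-order corrections are genuinely absorbed into $2\beta_1 + \eta\eps$.
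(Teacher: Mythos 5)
Your proposal is correct, but it takes a genuinely different route from the paper's proof. The paper also reduces the lemma to the pointwise bound $\P[c'\in U\mid c,c'\in K]\leq \uncolor(1+2\beta_1+\eta\eps)$, but it then splits according to whether $k(u)$ falls below the threshold $\psi(u)=(1-\beta_1)\keep\,|L(u)|$: the rare event $\{k(u)\leq\psi(u)\}$ is decoupled from the conditioning via Harris's inequality \ep{Theorem~\ref{harris}}, using that $\{k(u)\leq\psi(u)\}$ is increasing and $\{c,c'\in K\}$ is decreasing in the set of activations and failed equalizing flips, together with the Chernoff-based Lemma~\ref{listConcentration}; on the complementary event it bounds the probability that none of the at least $\psi(u)-1$ kept colors other than $c'$ is activated by $(1-\eta/\ell)^{\psi(u)}(1+\beta_1)$. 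You instead compute $\P[u\notin\dom(\phi)\mid c,c'\in K]$ \emph{exactly} as a product over $L(u)\setminus\{c'\}$, exploiting that the neighborhoods $N_H(c'')$, $c''\in L(u)$, are pairwise disjoint by the matching property of DP-covers \ep{and disjoint from $L(u)\setminus\{c'\}$ and meet $W=N_H(c)\cup N_H(c')$ only in the $r(c'')$ colors forced to be inactive}, so that the pairs \ep{activation, kept-status} are conditionally independent across $c''$ and the equalizing flips cancel the degree dependence exactly; the product is then at most $(1-\keep\eta/\ell)^{|L(u)|-1}$, and your Taylor comparison with $\uncolor=(1-\eta/\ell)^{\keep\ell/2}$ is accurate: the exponent gap is at most $\eta\eps/2+O(\eta/\ell)$ and $\eta/\ell\leq 1/(4d)\ll\beta_1$, so the error is comfortably absorbed into $2\beta_1+\eta\eps$. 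Your approach buys a cleaner, correlation-free computation that avoids both Harris's inequality and Lemma~\ref{listConcentration} in this step \ep{the equalizing coin flips do the work for you}; the paper's threshold-plus-FKG argument is more robust in that it only needs monotonicity and a concentration bound for $k(u)$ rather than an exact product structure, but for this lemma both yield the same bound.
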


\begin{proof}\stepcounter{ForClaims} \renewcommand{\theForClaims}{\ref{keptUncolorExpectation}}
    Note the following:
    \begin{align*}
    \E[|\keptedges\cap\uncoloredges|] &= \sum\limits_{c\in L(v)}\sum\limits_{c'\in N_H(c)}\P[c,c'\in K,\, c' \in U] \\
    &= \sum\limits_{c\in L(v)}\sum\limits_{c'\in N_H(c)}\P[c,c'\in K]\,\P[c'\in U\,|\,c,c'\in K].
    \end{align*}
It remains to show that for all $c \in L(v)$ and $c' \in N_H(c)$,
\[
    \P[c'\in U\,|\,c,c'\in K] \,\leq\, \uncolor(1+4\eta\beta).
\]
    To this end, take any $c \in L(v)$ and $c' \in N_H(c)$ and let
\[u = L^{-1}(c'), \qquad \psi(u) = (1-\eta\beta)\keep\,\ell(u).\] 
Then we can write
\begin{align}
    &\P[c'\in U \,|\, c,c'\in K] \nonumber\\
    =\, &\P[\phi(u) = \blank, \, k(u) \leq  \psi(u)\,|\, c,c'\in K] + \P[\phi(u) = \blank,\, k(u) > \psi(u) \,|\, c,c'\in K] \nonumber\\
    \leq\, &\P[k(u) \leq  \psi(u)\,|\, c,c'\in K] + \P[\phi(u) = \blank,\, k(u) > \psi(u) \,|\, c,c'\in K].\label{eq:two_terms}
\end{align}
We bound the first term with the following claim.
\begin{claim}\label{claim:ksmall}
$\P[k(u)\leq \psi(u)\,|\, c,c'\in K] \leq \exp\left(-d^{1/10}\right)$.
\end{claim}
\begin{claimproof}
    We use \hyperref[harris]{Harris's Inequality} (Theorem \ref{harris}). For each $\tilde{c}
\in V(H)$, define the following events:
\begin{align*}
    \tilde{c}_a&\defeq \set{\tilde{c} \text{ is activated}}, \\
    \tilde{c}_e&\defeq \set{\tilde{c} \text{ fails its equalizing coin-flip, i.e., } \eq(\tilde{c}) = 0}.
\end{align*}
Let $\Gamma = \{\tilde{c}_a, \tilde{c}_e \,:\, \tilde{c} \in V(H)\}$ and let $S \subseteq \Gamma$ be the random set of events in $\Gamma$ that took place during the execution of the \hyperlink{procedure}{Coloring Procedure}. Note that $S$ is formed randomly by including each event independently with probability
\[
    p(\tilde{c}_a) = \frac{\eta}{\ell} \qquad \text{and} \qquad p(\tilde{c}_e) = 1 - \frac{\keep}{(1-\eta/\ell)^{\deg_H(\tilde{c})}}.
\]
    Since the set $S$ encodes all the random choices made in the \hyperlink{procedure}{Coloring Procedure}, the outcome of the procedure can be computed from the value of $S$. In particular, for any subset $I \subseteq \Gamma$, we may let $K_I \subseteq V(H)$ denote the set such that $K = K_I$ when $S = I$. Explicitly,
    \[
        K_I \,=\, \set{\tilde{c} \in V(H) \,:\, \tilde{c}_e \not \in I \text{ and } \hat{c}_a \not \in I \text{ for all } \hat{c} \in N_H(\tilde{c})}.
    \]
    It follows that $K_I \subseteq K_J$ whenever $I \supseteq J$. Now we define the following two families of subsets of $\Gamma$:
\begin{align*}
    \mathcal{A}&\defeq \{I \subseteq \Gamma \,:\, \text{when $S = I$, we have $k(u)\leq \psi(u)$}\}, \\
     \mathcal{B}&\defeq \{I \subseteq \Gamma \,:\, \text{when $S = I$, we have $c$, $c' 
     \in K$}\}.
\end{align*}
    The monotonicity of $K_I$ implies that
    $\mathcal{A}$ is an increasing family of subsets of $\Gamma$, and $\mathcal{B}$ is a decreasing family.
    Hence, by \hyperref[harris]{Harris's Inequality}, $\P[S \in \mathcal{A} \,\vert\, S \in  \mathcal{B}] \leq \P[S \in \mathcal{A}]$. Using Lemma \ref{listConcentration} we obtain
\begin{align*}
    \P[k(u)\leq \psi(u)\,|\, c,c'\in K] \,\leq\, \P[k(u)\leq \psi(u)] \,\leq\, \exp\left(-d^{1/10}\right),
\end{align*}
    as claimed.
    \end{claimproof}

Let us now move our attention to the second term in \eqref{eq:two_terms}. We have:
\begin{align*}
    &\P[\phi(u) = \blank, \,k(u) > \psi(u)\,|\, c,c'\in K] \\
    =\, &\P[\phi(u) = \blank \,|\,k(u) > \psi(u) \text{ and } c,c'\in K]\ \P[k(u) > \psi(u)\,|\, c,c'\in K]\\
    \leq\,&\P[\phi(u) = \blank \,|\,k(u) > \psi(u) \text{ and } c,c'\in K].
\end{align*}
Since we are conditioning on $c \in K$, we know that $c'\notin A$. We are also conditioning on $k(u) > \psi(u)$ and so $\phi(u)=\blank$ if and only if $k(u) - 1 \geq \psi(u) - 1$ colors in $L(u)$ are not activated. (Here the ``$-1$'' comes from the fact that we know $c'\notin A$; note that $c'$ is the only neighbor of $c$ in $L(u)$, so we do not know anything about the activation status of the other colors in $L(u)$.) Therefore, we have,
\begin{align*}
    \P[\phi(u) = \blank \,|\,k(u) > \psi(u) \text{ and } c,c'\in K] \,\leq\, \left(1-\frac{\eta}{\ell}\right)^{\psi(u)}\,\left(1+\frac{\eta}{\ell}\right) \,\leq\, \left(1-\frac{\eta}{\ell}\right)^{\psi(u)}(1+1/(4d)),
\end{align*}
where the last inequality follows from \ref{item:ell}.
Observe the following:
\begin{align*}
    \psi(u) \,\geq\, (1-\eta\beta)(1-\beta)\keep\,\ell/2 \,\geq\, (1-(1+\eta)\beta)\keep\,\ell/2.
\end{align*}
With this bound, it follows that
\begin{align*}
    \left(1-\frac{\eta}{\ell}\right)^{\psi(u)}(1+1/(4d)) \, &\leq\, \left(1-\frac{\eta}{\ell}\right)^{(1-(1+\eta)\beta\,\keep\,\ell/2}(1+1/(4d)) \\
    &=\,\uncolor\left(1-\frac{\eta}{\ell}\right)^{-(1+\eta)\beta\,\keep\,\ell/2}(1+1/(4d)) \\
    &\leq\, \uncolor(1+\eta(1+\eta)\beta)(1+1/(4d)).
\end{align*}
Combining this with \eqref{eq:two_terms} and Claim~\ref{claim:ksmall}, we obtain the desired bound for large enough $d$:
\begin{align*}
    \P[\phi(u)=\blank\,|\,c,c'\in K] &\leq \exp\left(-d^{1/10}\right)+\uncolor(1+\eta(1+\eta)\beta)(1+1/(4d)) \\
    &\leq \uncolor(1 + 4\eta\beta).\qedhere
\end{align*}
\end{proof}
Lemmas~\ref{keptExpectation} and \ref{keptUncolorExpectation} together imply that
\begin{align*}
    \E[|\keptedges\cap\uncoloredges|] &\leq \keep^2\,\uncolor\,\ell(v)\,\overline{\deg}(v)(1+4\eta\beta)(1+\eta\beta) \\
    &\leq \keep^2\,\uncolor\,\ell(v)\,\overline{\deg}(v)(1+8\eta\beta),
\end{align*}
which completes the proof of Lemma \ref{expectationKeptUncolor}.

\section{Proof of Lemma \ref{concentrationKeptUncolor}}\label{sectionProofofConcentration}
In this section, we prove Lemma \ref{concentrationKeptUncolor}, which we restate below for convenience (see \S \ref{sectionProofOfIterationTheorem} for the definitions of $E_K(u)$ and $E_K(v)$):

\begin{Lemma*}[\ref{concentrationKeptUncolor}]
$\P[|\keptedges\cap\uncoloredges| \geq \keep^2\,\uncolor\,\ell(v)(\overline{\deg}(v) + 8\eta\beta\,d_{\max})] \leq d^{-100}$.
\end{Lemma*}

    Instead of working with the quantity $|\keptedges\cap \uncoloredges|$ directly, we focus separately on the two values $|\uncoloredges|$ and $|\uncoloredges\setminus\keptedges|$, both of which turn out to be easier to bound. We will show that, with sufficiently high probability, the former is not too much larger than its expected value and the latter is not too much smaller than its expected value. The identity $|\keptedges\cap \uncoloredges| = |\uncoloredges| - |\uncoloredges\setminus\keptedges|$ will then allow us to achieve the desired upper bound on $|\keptedges\cap \uncoloredges|$. 

    To derive the desired concentration of measure results, we shall use \hyperref[ExceptionalTalagrand]{Exceptional Talagrand's inequality} \ep{Theorem \ref{ExceptionalTalagrand}}. In both instances, we will use the same set of independent trials. Namely, our trials will be the equalizing coin flips together with a trial for each vertex $u \in V(G)$ that contains the information about which colors in $L(u)$ are activated. 
    More precisely, for a color $c \in V(H)$, let $\mathsf{act}(c)$ be the indicator random variable of the event $\set{\text{$c$ is activated}}$. For a vertex $u \in V(G)$, we write $L(u)$ as $L(u) = \{c_1, \ldots, c_{\ell(u)}\}$ and define
    \[
        \mathsf{act}(u) \,\defeq\, (\mathsf{act}(c_1),\ldots, \mathsf{act}(c_{\ell(u)})) \,\in\, \set{0,1}^{\ell(u)}.
    \]
    Then $\mathcal{T} \defeq \set{\mathsf{act}(u),\, \eq(c)\,:\, u \in V(G),\, c \in V(H)}$ is a family of independent random trials that completely determine the output of the \hyperlink{procedure}{Coloring Procedure}. Let $\Omega$ be the set of all possible outcomes of these trials. Set $C \defeq 20$ and let $\Omega^* \subseteq \Omega$ be the set of all outcomes under which at least one of the following statements holds:
    \begin{enumerate}[label=\ep{\normalfont{}Ex\arabic*}]
        \item\label{item:Ex1} there is a color $c \in L(N_G^2[v])$ such that $|N_H(c)\cap A| \geq C\log d$, or
        \item\label{item:Ex2} there is $u \in N_G^2[v]$ such that $|A(u)| \geq C\log d$.
    \end{enumerate}
    Here and in what follows, $N^2_G[v]$ is the set of all vertices at distance at most $2$ from $v$.

\begin{Lemma}\label{omegaStar}
    $\P[\Omega^*] \leq 2d^{-125}$.
\end{Lemma}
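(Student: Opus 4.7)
The plan is a straightforward union bound: both conditions defining $\Omega^*$ say that some binomial random variable with small mean exceeds a threshold $C\log d$ that grows with $d$, and the number of such variables in play is only polynomial in $d$.

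First I would bound the size of the $2$-neighborhood. Since the paper has reduced to $\Delta(G) \leq 2(1+\eps)\ell d$ and $\ell \leq 100\,d$ by \ref{item:ell}, we have $\Delta(G) \leq 220\,d^2$; hence $|N_G^2[v]| \leq 1 + \Delta(G) + \Delta(G)^2 \leq d^5$ for $d$ large, and since each list has size at most $(1+\eps)\ell \leq 110\,d$, also $|L(N_G^2[v])| \leq d^7$. A union bound will therefore be taken over at most $2d^7$ events, and it suffices to bound the probability of each individual event by, say, $d^{-140}$ once $d$ is large enough.

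Next I would estimate the relevant expectations. For a fixed color $c$, the random variable $|N_H(c)\cap A|$ is a sum of $\deg_H(c) \leq 2d$ independent $\Ber(\eta/\ell)$ trials, so by \ref{item:ell}, $\E[|N_H(c)\cap A|] \leq 2d\,\eta/\ell < 1/2$. Similarly, for a fixed vertex $u \in N_G^2[v]$, $|A(u)|$ is binomial with at most $(1+\eps)\ell$ trials of parameter $\eta/\ell$, so $\E[|A(u)|] \leq (1+\eps)\eta \leq 1$. In each case the mean is $O(1)$, while the threshold $C\log d = 20\log d$ grows without bound.

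Finally I would invoke the standard multiplicative upper-tail Chernoff bound $\P[X \geq t] \leq (e\,\E[X]/t)^t$, valid for any sum $X$ of independent Bernoullis and any $t \geq \E[X]$. This gives a decay of order $(e/(C\log d))^{C\log d} = d^{-\Omega(\log \log d)}$, comfortably below $d^{-140}$ once $d$ is large enough. The union bound over the at most $2d^7$ events then yields $\P[\Omega^*] \leq 2d^7 \cdot d^{-140} \leq 2d^{-125}$, as required. The only mild subtlety is that Theorem~\ref{chernoff} as stated in the paper covers only deviations $\xi \leq \E[X]$ and therefore does not apply directly; however, this is handled by the standard multiplicative form of Chernoff's inequality (a routine consequence of the same moment generating function computation), so it is not a genuine obstacle. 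Otherwise the argument is entirely routine.
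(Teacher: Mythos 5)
Your proposal is correct and follows essentially the same route as the paper: a union bound over the polynomially many colors in $L(N_G^2[v])$ and vertices in $N_G^2[v]$, with each individual event bounded by a binomial upper-tail estimate of the form $(e\,np/k)^k$ at threshold $k \approx C\log d$, giving $d^{-\omega(1)}$ decay. The only cosmetic difference is that the paper obtains this tail bound directly via $\binom{n}{k}p^k \leq (en p/k)^k$ rather than quoting a multiplicative Chernoff inequality, which sidesteps the issue you correctly noted about Theorem~\ref{chernoff} being stated only for deviations $\xi \leq \E[X]$.
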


\begin{proof}
    For any color $c$, the number of colors in $N_H(c) \cap A$ is a binomial random variable with at most $2d$ trials, each with probability $\eta/\ell$. 
    By the union bound, 
\begin{align*}
    \P\left[|N_H(c) \cap A| \geq C{\log d}\right] &\leq {2d \choose \lceil C \log d \rceil} \left(\frac{\eta}{\ell}\right)^{\lceil C \log d \rceil} \leq \left(\frac{2ed}{\lceil C \log d \rceil}\right)^{\lceil C \log d \rceil}  \left(\frac{\eta}{\ell}\right)^{\lceil C \log d \rceil} \\
    &\leq \left(\frac{e}{2\lceil C \log d \rceil}\right)^{\lceil C \log d \rceil} \leq d^{-150},
\end{align*}
where the last inequality holds for $d$ large enough.
By the union bound and the fact that $\deg_G(v) \leq 2(1+\beta)\ell d \leq 220d^2$, we get
\begin{align*}
    \P\left[\text{\ref{item:Ex1} holds}\right] \,\leq\, |L(N_G^2[v])| d^{-150} \,\leq\, d^{-125},
\end{align*}
for $d$ large enough. Similarly, for any $u \in N^2_G[v]$, the number of colors in $A(u)$ is a binomial random variable with $\ell(u)$ trials, each having probability $\eta/\ell$. 
By the union bound,
\begin{align*}
    \P[|A(u)| \geq C\log d] &\leq {\ell(u) \choose \lceil C \log d \rceil} \left(\frac{\eta}{\ell}\right)^{\lceil C \log d \rceil} \leq \left(\frac{e\ell(u)}{\lceil C \log d \rceil}\right)^{\lceil C \log d \rceil}  \left(\frac{\eta}{\ell}\right)^{\lceil C \log d \rceil} \\
    &\leq \left(\frac{e}{\lceil C \log d \rceil}\right)^{\lceil C \log d \rceil} \leq d^{-150},
\end{align*}
where the last inequality holds for $d$ large enough.
Since $\ell(u) \leq (1+\beta)\ell \leq 110d$, we get
\begin{align*}
    \P\left[\text{\ref{item:Ex2} holds}\right] \leq |N_G^2[v]| d^{-150} \leq d^{-125}.
\end{align*}
Thus $\P\left[\Omega^*\right] \leq 2d^{-125}$, as claimed.
\end{proof}

    \hypertarget{certificate}{}\hypertarget{uncolorcertificate}{} For each removed color $c \not\in K$, we define a trial $\mathsf{cert}(c) \in \mathcal{T}$ as follows:
    \begin{itemize}
        \item If $\eq(c) = 0$, we let $\mathsf{cert}(c) \defeq \eq(c)$.
        
        \item Otherwise, there must exist a vertex $w \in V(G)$ such that $N_H(c) \cap A(w) \neq \0$. In this case we let $\mathsf{cert}(c) \defeq \mathsf{act}(w)$ for any such vertex $w$.
    \end{itemize}
    We call $\mathsf{cert}(c)$ the \emphd{removal certificate} of $c$. Crucially, if $c \not\in K$, this fact is certified by the outcome of the trial $\mathsf{cert}(c)$, in the sense that, given the current outcome of $\mathsf{cert}(c)$, $c$ would not be in $K$ regardless of the outcomes of the other trials. For an uncolored vertex $u \in V(G)$, every activated color in $L(u)$ is removed, so we can define the set of trials
    \[
        \mathsf{cert}(u) \,\defeq\, \set{\mathsf{act}(u)} \cup \set{\mathsf{cert}(c) \,:\, c \in A(u)}.
    \]
    We call $\mathsf{cert}(u)$ the \emphd{uncoloring certificate} of $u$, because if $u$ is uncolored, this fact is certified by the outcomes of the trials in $\mathsf{cert}(u)$. Notice that, by definition, $|\mathsf{cert}(u)| \leq 1 + |A(u)|$. 

For the following lemmas, recall that $d_{\max} \defeq \max\{d^{7/8}, \Delta(H)\}$. 

\begin{Lemma}\label{uncolorGconcentration}
$\P\left[|\uncoloredges| > \E\left[|\uncoloredges|\right]+ \eta\beta\,\keep^2\,\uncolor\,\ell(v)\,d_{\max}\right] \leq d^{-105}$.
\end{Lemma}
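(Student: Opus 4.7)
The plan is to apply Exceptional Talagrand's Inequality (Theorem~\ref{ExceptionalTalagrand}) to the random variable $X \defeq |\uncoloredges|$, using the trials $\mathcal{T}$ and the exceptional set $\Omega^*$ introduced above. Since $\sup X \leq \ell(v)\Delta(H) = O(d^2)$, Lemma~\ref{omegaStar} verifies condition~\ref{item:ET2}, so the remaining task is to establish~\ref{item:ET1} with appropriate $\gamma$ and $s$.

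For each $\omega \notin \Omega^*$, I would take $I$ to be the union of the uncoloring certificates $\mathsf{cert}(u)$ over all $u \in N_G(v)$ that are uncolored in $\omega$. The key property, immediate from the construction of $\mathsf{cert}$, is that if $\omega'$ agrees with $\omega$ on every trial in $\mathsf{cert}(u)$, then $u$ is still uncolored in $\omega'$: agreement on $\mathsf{act}(u)$ preserves $A(u)$, and each $\mathsf{cert}(\tilde c)$ for $\tilde c \in A(u)$ continues to certify the removal of $\tilde c$ (either because $\eq(\tilde c) = 0$ still holds, or because the witness vertex $w$ used to define $\mathsf{cert}(\tilde c)$ still provides an activated neighbor of $\tilde c$ in $L(w)$). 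Outside $\Omega^*$, each certificate contains at most $1 + C\log d$ trials, and since $|N_G(v)| \leq 2(1+\eps)\ell d$, we can take $s$ polynomial in $d$.

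The main obstacle is controlling the Lipschitz parameter $\gamma$: although a single vertex-level trial $\mathsf{act}(u_0)$ may in principle belong to many certificates, the constraints imposed by $\Omega^*$ sharply bound the damage it can do. Specifically, flipping $\mathsf{act}(u_0)$ between outcomes in $\Omega\setminus\Omega^*$ changes $A(u_0)$ on at most $2C\log d$ colors, and a double-counting argument on activated neighbor pairs shows that only $O(\log^2 d)$ activated colors $c''$ can undergo a $K$-status flip as a result. Each such flip changes the coloring status of at most one vertex $u''\in N_G(v)$, removing at most $(1+\eps)\ell$ edges from $\uncoloredges$. Equalizing coin flip trials are treated analogously and contribute a smaller amount.

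With these ingredients assembled and $\xi \defeq \beta_1\keep^2\uncolor\ell(v)d_{\max}$, the combination of the lower bound $d_{\max} \geq d^{7/8}$, $\beta_1 = d^{-1/(100t)}$, and uniform positive lower bounds on $\keep$ and $\uncolor$ from assumptions \ref{item:ell}, \ref{item:t}, and \ref{item:eta} makes the exponent $\xi^2/(16\gamma^2 s)$ exceed $105\log d$. Since $4\P[\Omega^*] \leq 8d^{-125}$ by Lemma~\ref{omegaStar}, Theorem~\ref{ExceptionalTalagrand} then yields the required bound $\P[X \geq \E X + \xi] \leq d^{-105}$.
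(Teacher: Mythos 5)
Your overall framework (Exceptional Talagrand with the trials $\mathcal{T}$, the exceptional set $\Omega^*$, and certificates built from $\mathsf{cert}(u)$) matches the paper, but applying it \emph{directly} to $X=|\uncoloredges|$ does not work, and this is a genuine gap rather than a detail. The problem is the size of the Lipschitz parameter you yourself identify: a single changed trial can re-color a vertex $u\in N_G(v)$, and that one vertex can carry up to $|E_H[L(v),L(u)]|\leq (1+\eps)\ell$ edges of $\uncoloredges$ (the matching between $L(v)$ and $L(u)$ can be essentially full). Combined with the $O(\log^2 d)$ vertices a single trial can affect, this forces $\gamma=\Theta(\ell\log^2 d)$, while the certificate $I=\bigcup_u \mathsf{cert}(u)$ ranges over all relevant uncolored neighbors, of which there can be $\Theta(\ell d)$, so $s=\Theta(\ell d\log d)$. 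Then $\gamma\sqrt{s}=\Theta(\ell^{3/2}d^{1/2}\log^{5/2}d)$, whereas the target deviation is only $\xi=\beta_1\keep^2\uncolor\,\ell(v)\,d_{\max}\leq O(\beta_1\ell d)$ with $\beta_1=d^{-1/(100t)}<1$. Since $d_{\max}\leq 2d$, one checks $\xi^2/(16\gamma^2 s)=O\bigl(\beta_1^2 d_{\max}^2/(d^2\log^5 d)\bigr)=o(1)$, and even the precondition $\xi>50\gamma\sqrt{s}$ of Theorem~\ref{ExceptionalTalagrand} fails; so your final claim that the exponent exceeds $105\log d$ is not justified and is in fact false in the worst case.

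The paper avoids this by decomposing the random variable before applying Talagrand: for each $c\in L(v)$ it sets $\uncoloredgesc\defeq\{cc'\in E(H):c'\in U\}$ and proves concentration of each $|\uncoloredgesc|$ within $\beta_1\keep^2\uncolor\,d_{\max}$. For a \emph{fixed} $c$, each neighbor $u$ hosts at most one edge of $\uncoloredgesc$ (matching property), so the certificate has size $s=(1+C\log d)\deg_H(c)=O(d\log d)$ and, crucially, $\gamma=1+(C\log d)^2$ is only polylogarithmic; then $\xi=\beta_1\keep^2\uncolor\,d_{\max}\geq \Omega(\beta_1 d^{7/8})$ comfortably beats $50\gamma\sqrt{s}$ and yields failure probability $d^{-110}$ per color. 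A union bound over the at most $(1+\eps)\ell$ colors of $L(v)$, together with $\E[|\uncoloredges|]=\sum_{c\in L(v)}\E[|\uncoloredgesc|]$, gives the stated bound with the extra factor $\ell(v)$ in the deviation. If you want to salvage your write-up, you need this per-color (or some comparable) decomposition; the direct certificate argument cannot be pushed through because the single-trial effect scales with $\ell$.
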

\begin{proof}\stepcounter{ForClaims} \renewcommand{\theForClaims}{\ref{uncolorGconcentration}}
    Instead of bounding $|\uncoloredges|$ directly, we partition $\uncoloredges$ by defining, for each $c \in L(v)$,
\begin{align*}
    \uncoloredgesc \defeq \{cc'\in E(H)\,:\, c' \in U\}.
\end{align*}
We now use \hyperref[ExceptionalTalagrand]{Exceptional Talagrand's inequality} to prove the concentration of $|\uncoloredgesc|$.

\begin{claim}\label{uncolorHconcentration}
    $\P\left[\big||\uncoloredgesc|-\E\left[|\uncoloredgesc|\right]\big| > \eta\beta\, \keep^2\,\uncolor\, d_{\max}\right] \leq d^{-110}$ for every $c \in L(v)$.
\end{claim}
\begin{claimproof}
    We claim that the random variable $|\uncoloredgesc|$ satisfies conditions \ref{item:ET1} and \ref{item:ET2} of Theorem~\ref{ExceptionalTalagrand} with $s = (1 + C \log d)\deg_H(c)$, $\gamma = 1 + (C{\log d})^2$, and the exceptional set $\Omega^\ast$.
    
    To verify \ref{item:ET1}, take any $\omega \in \Omega \setminus \Omega^*$ and consider the output of the \hyperlink{procedure}{Coloring Procedure} under $\omega$.  Let $U^* \subseteq N_G(v)$ be the set of all neighbors of $v$ whose list of colors contains an endpoint of an edge in $E_U(v,c)$. That is, $U^*$ is defined as follows:
    \[
        U^\ast \,\defeq\, \set{u \in N_G(v) \,:\, \phi(u) = \blank \text{ and } N_H(c) \cap L(u) \neq \0}.
    \]
    Since every vertex $u \in U^\ast$ is uncolored, we can form a set $I$ of trials as follows:
    \[
        I \,\defeq\, \bigcup_{u \in U^\ast} \mathsf{cert}(u).
    \]
    \ep{Recall that for an uncolored vertex $u$, $\mathsf{cert}(u)$ is the \hyperlink{uncolorcertificate}{uncoloring certificate} of $u$.} 
    Since $|U^\ast| \leq \deg_H(c)$ and $|A(u)| < C\log d$ for all $u \in U^\ast$ \ep{because $\omega \not \in \Omega^\ast$}, we have
    \[
        |I| \,\leq\, (1 + C \log d) \deg_H(c) \,=\, s.
    \]

    Now take any $q > 0$ and suppose that a set of outcomes $\omega' \in \Omega \setminus \Omega^*$ satisfies \[|\uncoloredgesc(\omega')| \,\leq\, |\uncoloredgesc(\omega)| - q.\] For each vertex $u \in U^\ast$ that is an endpoint of an edge in $\uncoloredgesc(\omega)\setminus \uncoloredgesc(\omega')$, the outcomes of at least one trial in $\mathsf{cert}(u)$ must be different in $\omega$ and in $\omega'$. A trial of the form $\eq(c')$ can belong to at most one set $\mathsf{cert}(u)$ \ep{namely the one corresponding to $u = L^{-1}(c')$}. Furthermore, if some $u \in U^\ast$ and $w \in V(G)$ satisfy $\mathsf{act}(w) \in \mathsf{cert}(u)$, then $w \in N_G[u]$ and either $w = u$ or
    \begin{equation}\label{eq:uw}
        N_H(A(w)) \cap A(u) \,\neq\, \0.
    \end{equation}
    Since $\omega \not \in \Omega^\ast$, each vertex $w \in N_G^2[v]$ has at most $(C \log d)^2$ neighbors $u$ such that \eqref{eq:uw} holds. It follows that every trial in $\mathcal{T}$ can belong to at most $1 + (C{\log d})^2 = \gamma$ sets $\mathsf{cert}(u)$ corresponding to $u \in U^\ast$. Therefore, $\omega'$ and $\omega$ must differ on at least $q/\gamma$ trials in $I$, as desired.

    It remains to show that $\P\left[\Omega^*\right] \leq M^{-2}$, where $M = \max\{\sup |\uncoloredgesc|, 1\}$. Note that \[\max\{\sup |\uncoloredgesc|, 1 \} \leq \deg_H(c) \leq 2d,\]
so it follows from Lemma \ref{omegaStar} that $\P\left[\Omega^*\right] \leq 1/M^2$, for $d$ large enough.

We can now use \hyperref[ExceptionalTalagrand]{Exceptional Talagrand's inequality}. Let $\xi \defeq \eta\beta\,\keep^2\,\uncolor\,d_{\max}$. Note that $\xi > 50\gamma\sqrt{s}$ \ep{if $d$ is large enough}. 
We can therefore write 
\begin{align*}
&\P\left[\big||\uncoloredgesc|-\E\left[|\uncoloredgesc|\right]\big| > \eta\beta \,\keep^2\,\uncolor\,d_{\max}\right] \\
\leq\,& 4\exp{\left(-\frac{\eta^2\beta^2\,\keep^4\,\uncolor ^2\,d_{\max}^2}{16(1 + (C{\log d})^2)^2(1 + C \log d) \deg_H(c))}\right)} + 4\P[\Omega^*]\\
\leq\,& 4\exp\left( -\Omega\left(\frac{\eta^2\beta^2 d^{3/4}}{\log^5 d}\right)\right) + 8d^{-125} \\
\leq\,& d^{-110},
\end{align*}
for $d$ large enough.
\end{claimproof}

Putting all of this together, we have:
\begin{align*}
    &\P\left[|\uncoloredges| > \E[|\uncoloredges|] + \eta\beta\,\keep^2\,\uncolor\,\ell(v)\,d_{\max}\right] \\
    \leq\, &\P\left[\exists~ c\in L(v) \text{ such that } |\uncoloredgesc| > \E[|\uncoloredgesc|] + \eta\beta\,\keep^2\,\uncolor\, d_{\max}\right] \\
    \leq\, &\ell(v)\,d^{-110} \\
    \leq\, &d^{-105}. \qedhere
\end{align*}
\end{proof}

Now, we turn our attention toward $|\uncoloredges\setminus\keptedges|$. Let $S \defeq E_H[L(v), N_H(L(v))]$ and $\mu \defeq 1/6$.

\begin{Lemma}\label{lemma:smallS}
    If $|S| \leq d^{2-\mu}$, then Lemma~\ref{concentrationKeptUncolor} holds.
\end{Lemma}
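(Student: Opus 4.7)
My plan is to prove Lemma~\ref{lemma:smallS} by a purely deterministic argument: when $|S|$ is as small as assumed, the bound in Lemma~\ref{concentrationKeptUncolor} fails with probability $0$, not merely at most $d^{-100}$. The key observation is that every edge in $\keptedges$ has its first endpoint in $L(v)$ and its second endpoint in $N_H(L(v))$, so $\keptedges \subseteq S$ for every outcome of the randomness. Consequently,
\[
    |\keptedges \cap \uncoloredges| \;\leq\; |\keptedges| \;\leq\; |S| \;\leq\; d^{2-\mu}
\]
holds unconditionally.

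It then remains to verify that, under the hypotheses of Lemma~\ref{iterationTheorem}, the threshold
\[
    T \;\defeq\; \keep^2\,\uncolor\,\ell(v)\bigl(\overline{\deg}(v) + (\eta\eps + 10\beta_1)d_{\max}\bigr)
\]
strictly exceeds $d^{2-\mu}$. I would drop the nonnegative $\overline{\deg}(v)$ and $\eta\eps$ contributions and bound each remaining factor from below: $\keep \geq 1/2$ follows from $\ell > 4\eta d$ in \ref{item:ell}; an application of Bernoulli's inequality to $\uncolor = (1-\eta/\ell)^{\keep\ell/2}$ combined with $\eta < 1/\log d$ from \ref{item:eta} yields $\uncolor \geq 1/2$; and \ref{item:list_assumption}, \ref{item:ell}, \ref{item:eta} give $\ell(v) \geq (1-\eps)\ell/2 \geq 2(1-\eps)\eta d \geq d/\log^{5}d$. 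With $d_{\max} \geq d^{7/8}$ and $\beta_1 = d^{-1/(100t)}$, this yields
\[
    T \;\geq\; \Omega\!\left(\frac{d^{15/8 \,-\, 1/(100t)}}{\log^{5} d}\right).
\]

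Since $\mu = 1/6$ gives $2 - \mu = 11/6$, and $15/8 - 11/6 = 1/24$, whereas $1/(100t) \leq 1/100$ for $t \geq 1$, the exponent $15/8 - 1/(100t)$ exceeds $11/6$ by a positive constant, so the extraneous $\log^{5}d$ factor is harmless and $T > d^{2-\mu}$ provided $\tilde{d}$ is large enough. Combined with the deterministic inequality above, this forces $|\keptedges \cap \uncoloredges| < T$ with probability $1$, which is far stronger than the bound $d^{-100}$ required by Lemma~\ref{concentrationKeptUncolor}. There is essentially no obstacle here: the sole purpose of Lemma~\ref{lemma:smallS} is to dispose of this easy regime so that the genuinely probabilistic analysis in \S\ref{sectionProofofConcentration} may freely assume $|S| > d^{2-\mu}$, which is the range where \hyperref[ExceptionalTalagrand]{Exceptional Talagrand's inequality} is both applicable and actually needed.
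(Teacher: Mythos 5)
Your proof is correct and follows essentially the same route as the paper: both arguments are deterministic, bounding $|\keptedges\cap\uncoloredges|$ by $|S|\leq d^{2-\mu}$ and then checking via $\keep,\uncolor\geq 1/2$, $\ell(v)\gtrsim d/\log^5 d$, $d_{\max}\geq d^{7/8}$, and $\beta_1=d^{-1/(100t)}$ that the threshold in Lemma~\ref{concentrationKeptUncolor} exceeds $d^{2-\mu}$ for large $d$. The only cosmetic difference is that you discard the $\keep^2\uncolor\,\ell(v)\,\overline{\deg}(v)$ term and use the full $10\beta_1$ contribution, while the paper keeps that term (using $\ell(v)\overline{\deg}(v)=|S|$) and absorbs the deficit into a $2\beta_1$ piece; both yield the same polynomial margin in the exponent comparison.
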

\begin{proof}
    Assume that $|S| \leq d^{2- \mu}$. Since $\keep$, $\uncolor \geq \frac{1}{2}$, it follows that
\begin{align*}
    |S| \,\leq\, d^{2- \mu} \,<\, \frac{8}{7}d^{1+7/8-1/(50t)} \,\leq\, \frac{8\keep^2\uncolor}{1-\keep^2\uncolor}d^{1-1/(50t)}d_{\max}.
\end{align*}
Using that $\ell(v) \geq \frac{d}{\log^5(d)} \geq d^{1 - 1/(100t)}$ for $d$ large enough, and that $\ell(v)\overline{\deg}(v) = |S|$, we get
\begin{align*}
    |S| &< \frac{8\eta\beta\, \keep^2\uncolor\ell(v)d_{\max}}{1-\keep^2\uncolor}
\end{align*}
from which it follows that
\begin{align*}
     |S| &< \keep^2\uncolor|S| + 8\eta\beta \keep^2\uncolor\ell(v)d_{\max} \\
    &\leq \keep^2\uncolor\ell(v)\overline{\deg}(v) + 8\eta\beta \keep^2\uncolor\ell(v)d_{\max}.
\end{align*}
Therefore, $|\uncoloredges\cap\keptedges| \leq |S| < \keep^2\uncolor\ell(v)(\overline{\deg}(v) + 8\eta\beta\,d_{\max})$, as desired. 
\end{proof}

    In view of Lemma~\ref{lemma:smallS}, we may now assume that $|S|>2^{d-\mu}$ for the rest of the proof. This assumption allows us to establish a strong concentration bound on $|\uncoloredges\setminus\keptedges|$. 

\begin{Lemma}\label{UminusKconcentration}
    If $|S| > d^{2-\mu}$, then \[\P[|\uncoloredges\setminus\keptedges| \leq \E[|\uncoloredges\setminus\keptedges|] - \eta\beta\,\keep^2\uncolor\ell(v)d_{\max}] \,\leq\, d^{-105}.\]
\end{Lemma}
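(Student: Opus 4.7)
The plan is to apply \hyperref[ExceptionalTalagrand]{Exceptional Talagrand's inequality} to obtain lower-tail concentration for $|\uncoloredges \setminus \keptedges|$, mirroring the template of Lemma~\ref{uncolorGconcentration}. The natural move is to partition the edge set by its $L(v)$-endpoint: for each $c \in L(v)$, let
\[
    E_{U \setminus K}(v, c) \,\defeq\, \set{cc' \in E(H) \,:\, c' \in N_H(c),\, c' \in U,\, cc' \notin \keptedges},
\]
so that $|\uncoloredges \setminus \keptedges| = \sum_{c \in L(v)} |E_{U \setminus K}(v, c)|$. I plan to establish that for each $c \in L(v)$,
\[
    \P\bigl[|E_{U \setminus K}(v, c)| \leq \E[|E_{U \setminus K}(v, c)|] - \beta_1 \keep^2 \uncolor \, d_{\max}\bigr] \,\leq\, d^{-110},
\]
and then union-bound over the at most $110d$ choices of $c$ to obtain the stated $d^{-105}$ bound.

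For the Talagrand application I reuse the trial family $\mathcal{T}$ and exceptional set $\Omega^*$ from Lemma~\ref{uncolorGconcentration}. Given $\omega \notin \Omega^*$, I assemble the certificate set $I$ by taking, for each edge $cc' \in E_{U \setminus K}(v, c)(\omega)$, the \hyperlink{uncolorcertificate}{uncoloring certificate} $\mathsf{cert}(L^{-1}(c'))$ together with whichever \hyperlink{certificate}{removal certificate} witnesses that an endpoint of $cc'$ is not in $K$: namely $\mathsf{cert}(c)$ when $c \notin K$, and otherwise $\mathsf{cert}(c')$ (the unique one present, since in that case $c' \notin K$). Since $\omega \notin \Omega^*$ each uncoloring certificate has at most $1 + C\log d$ trials and $|N_H(c)| \leq 2d$, so $|I| \leq s$ with $s = O(d \log d)$; the multiplicity analysis from Lemma~\ref{uncolorGconcentration} persists and shows each trial lies in at most $\gamma = 1 + (C\log d)^2$ of the uncoloring certificates packed into $I$. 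Condition \ref{item:ET2} holds since $\sup |E_{U \setminus K}(v, c)| \leq \deg_H(c) \leq 2d$ and $\P[\Omega^*] \leq 2d^{-125}$ by Lemma~\ref{omegaStar}.

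To verify the downward Lipschitz condition \ref{item:ET1}, I case-analyze how an edge $cc' \in E_{U \setminus K}(v, c)(\omega)$ can fail to lie in $E_{U \setminus K}(v, c)(\omega')$: either $c' \notin U(\omega')$, forcing some trial of $\mathsf{cert}(L^{-1}(c'))$ to differ, or else $cc' \in \keptedges(\omega')$, which requires both endpoints to be kept under $\omega'$ and hence forces the removal certificate placed in $I$ to flip. Taking $\xi \defeq \beta_1 \keep^2 \uncolor d_{\max}$ and noting $d_{\max} \geq d^{7/8}$ dominates $\gamma\sqrt{s} = O(d^{1/2}\operatorname{polylog}(d))$ for $d$ large, Theorem~\ref{ExceptionalTalagrand} yields the per-piece probability $d^{-110}$, and the union bound closes out the lemma.

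The hard part is the damage-control step when $c \notin K(\omega)$: a single trial change flipping $c$ into $K(\omega')$ can simultaneously eject every edge $cc'$ with $c' \in N_H(c) \cap U \cap K$ from $E_{U \setminus K}(v, c)$, a potential drop of order $\deg_H(c)$ per trial that grossly exceeds the $\gamma$ budget afforded by the uncoloring-certificate multiplicity. I expect that overcoming this requires either enlarging $\Omega^*$ with a high-probability tail bound on $|N_H(c) \cap U \cap K|$ (whose expectation is on the order of $\keep \uncolor \deg_H(c)$), or invoking the random partitioning technique advertised in \S\ref{subsec:sketch}: by randomly splitting $N_H(L(v))$ into many pieces and applying Talagrand per piece, the large single-trial drop gets distributed across independently concentrated sub-sums and becomes tolerable. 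The assumption $|S| > d^{2-\mu}$ is what renders the target deviation $\beta_1 \keep^2 \uncolor \ell(v) d_{\max}$ meaningfully smaller than $\E[|\uncoloredges \setminus \keptedges|]$---the complementary small-$|S|$ case having already been disposed of in Lemma~\ref{lemma:smallS}.
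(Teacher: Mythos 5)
Your write-up correctly identifies the central obstruction, but it does not overcome it, and the decomposition you actually propose (splitting $\uncoloredges\setminus\keptedges$ by its $L(v)$-endpoint $c$) cannot be repaired in the ways you suggest. For a fixed $c$, on a \emph{typical} outcome with $c \notin K$, changing the single trial $\eq(c)$ (or the one activation trial certifying $c$'s removal) moves $c$ into $K$ and ejects all edges $cc'$ with $c' \in U \cap K$ from your piece; the number of such edges concentrates around $\keep\,\uncolor\,\deg_H(c) = \Theta(d)$, so this order-$d$ single-trial drop is not an exceptional event. Hence enlarging $\Omega^*$ by a tail bound on $|N_H(c)\cap U\cap K|$ is a dead end, and with $\gamma = 1 + (C\log d)^2$ condition \ref{item:ET1} simply fails for your per-color variables; taking $\gamma = \Theta(d)$ instead destroys the bound, since then $50\gamma\sqrt{s} \gg \beta_1\keep^2\uncolor\, d_{\max}$. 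Your closing sentence names the random partitioning technique as a possible fix but leaves it entirely unexecuted, and that technique \emph{is} the proof: it is not an optional enhancement of your scheme but a replacement for it.

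Concretely, the paper partitions the whole edge set $S = E_H[L(v), N_H(L(v))]$ \emph{uniformly at random} into $k \approx d^{1/5}$ parts (Claim~\ref{partition}), securing three properties: each part has size between roughly $d^{2-\tau-\mu}$ and $d^{2-\tau}$ (this is where $|S| > d^{2-\mu}$ enters, via the lower bound \ref{item:P1}, which is needed to satisfy $\xi > 50\gamma\sqrt{s}$ and to make the Talagrand exponent beat $d^{-110}$ --- not merely to compare the deviation with the expectation); each matching $M_u$ meets each part in at most $\tfrac{3}{2}\ell(u)d^{-\tau}$ edges \ref{item:P2}; and each color is incident to at most $6d^{1-\tau}$ edges of each part \ref{item:P3}. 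Then Exceptional Talagrand is applied to $Z_i = S_i \cap (\uncoloredges\setminus\keptedges)$ with per-edge certificates $I_{cc'} = \mathsf{cert}(u) \cup \{\mathsf{cert}(c^\ast)\}$, and \ref{item:P2}--\ref{item:P3} are exactly what cap the multiplicity of any single trial at $\gamma \approx d^{1-\tau}\log^2 d$, taming the $\Theta(d)$ effect you flagged by spreading the affected edges across many independently concentrated parts (Claim~\ref{UminsKPartitionConcentration}); a union bound over the $d^{\tau}$ parts finishes. None of this --- the choice of $\tau$, the three partition properties and their Chernoff-based existence proof, the multiplicity case analysis for $\eq$-trials versus $\mathsf{act}$-trials, or the verification that $\beta_1\keep^2\uncolor|S_i|$ clears the $50\gamma\sqrt{s}$ threshold --- appears in your proposal, so the argument as written does not establish Lemma~\ref{UminusKconcentration}.
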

\begin{proof}\stepcounter{ForClaims} \renewcommand{\theForClaims}{\ref{UminusKconcentration}}
    We make the following definitions for $\tau \defeq 1/5$, $c'\in V(H)$, and $u \in N_G(v)$:
\begin{align*} 
k &\defeq \lceil d^{\tau} \rceil,  \qquad \Gamma(c') \defeq \{xy \in S \,:\, c'x \in E(H) \text{ or } c'y \in E(H)\}, \qquad M_u \defeq E_H[L(v), L(u)].
\end{align*}
\begin{claim}[Random partitioning]\label{partition}
There exists a partition of $S$ into sets $S_1$, \ldots, $S_k$ such that the following statements hold for every $1\leq i \leq k$, $c'\in L(N_G^2[v])$, and $u \in N_G(v)$:
\begin{enumerate}[label=\ep{\normalfont{}P\arabic*}]
    \item\label{item:P1} $\frac{d^{2-\tau-\mu}}{2} \leq |S_i| \leq 400d^{2-\tau}$,
    
    \item\label{item:P2} $|M_u\cap S_i| \leq \frac{3}{2}\,\ell(u)\,d^{-\tau}$,
    
    \item\label{item:P3} $|\Gamma(c') \cap S_i| \leq 6d^{1-\tau}$.
\end{enumerate}
\end{claim}

\begin{claimproof}
    Form a partition $S= S_1 \sqcup \ldots \sqcup S_{k}$ by independently placing each edge in $S$ into one of the parts uniformly at random. Since \[d^{2-\mu} \,<\, |S| \,=\, \ell(v) \overline{\deg}(v) \,\leq\, 2(1+\beta)\ell d \,\leq\, 300d^{2}\] and $d^\tau \leq k \leq 4d^\tau/3$, we have
    \[
        \frac{3 d^{2-\tau-\mu}}{4} \,\leq\, \E[|S_i|] \,\leq\, 300d^{2-\tau}.
    \]
    By the \hyperref[chernoff]{Chernoff bound} (Theorem \ref{chernoff}), 
    \[
        \P\left[|S_i| \notin \left(1 \pm \frac{1}{3}\right)\E[|S_i|] \right] \,\leq\, 2\exp\left({-\frac{\E[|S_i|]}{27}}\right) \,\leq\, 2\exp\left({-\frac{d^{2-\tau-\mu}}{36} }\right).
    \]
Therefore, 
\begin{align}\label{p1}
   \P\left[\exists\, i \text{ such that } |S_i| \notin \left[(1/2) d^{2-\tau-\mu},\, 400d^{2-\tau}\right] \right] \leq 2k\exp\left({-\frac{d^{2-\tau -\mu}}{36}}\right) \leq d^{-1}.
\end{align}

    Next, for each $u \in N_G(v)$, let $m_i(u) \defeq |M_u\cap S_i|$. Then $m_i(u)$ is a binomial random variable with at most $\ell(u)$ trials with success probability $1/k$. We may assume that $|M_u| \geq \ell(u)d^{-\tau}$ (otherwise \ref{item:P2} holds trivially) and hence $\E[m_i(u)] \in [(3/4)\ell(u)d^{-2\tau}, \ell(u)d^{-\tau}]$. By the \hyperref[chernoff]{Chernoff bound} again,
\begin{align*}
    \P\left[m_i(u) \geq \left(1+\frac 12\right)\ell(u)d^{-\tau}]\right] &\leq 2\exp\left(-\frac{\E[m_i(u)]}{12}\right) \leq 2\exp\left(-\frac{d^{1/2-2\tau}}{16}\right).
\end{align*}
The last inequality follows since $\ell(u) \geq (1-\beta)\ell/2 \geq 2(1-\beta)d/(\log d)^5 \geq d^{1/2}$ for $d$ large enough. By the union bound, we may conclude that
\begin{align}\label{p2}
    \P\left[\exists\, u \in N_G(v), \ \exists i \text{ such that } m_i(u) \geq \left(1+\frac 12\right)\ell(u)d^{-\tau}\right] \leq 4dk\exp\left(-\frac{d^{1/2-2\tau}}{16}\right) \leq d^{-1}.
\end{align}

    Finally, consider any $c' \in L(N^2_G[v])$. If $|\Gamma(c')| \leq d^{1-\tau}$, then \ref{item:P3} holds trivially, so we may assume that $|\Gamma(c')| \geq d^{1-\tau}$. We now show that $|\Gamma(c')| \leq 4d$. Since the edges between two lists in $H$ form a matching, it follows that $c'$ can have at most one neighbor in $L(v)$, which is incident to at most $2d$ edges in $S$. Also, $c'$ can have at most $2d$ neighbors in $N_H(L(v))$, each of which is incident to a single edge in $S$. Hence, the desired bound on $|\Gamma(c')|$ follows. Now let $r_i(c') \defeq |\Gamma(c') \cap S_i|$. This is a binomial random variable, and we have
    \[\frac{3d^{1-2\tau}}{4} \,\leq\, \E[r_i(c')] \,\leq\, 4d^{1-\tau}.\]
    By the \hyperref[chernoff]{Chernoff bound}, we find that
\[\P\Big[r_i(c') \geq \left(1+\frac{1}{2}\right)\E[r_i(c')]\Big] \leq 2\exp\left(-\frac{\E[r_i(c')]}{12}\right) \leq 2\exp\left(-\frac{d^{1 - 2\tau}}{16}\right).\]
By the union bound,
\begin{align}
    \P\Big[\exists\, c'\in L(N^2_G[v]),\ \exists\, i \text{ such that } r_i(c') \geq \left(1+\frac{1}{2}\right)\E[r_i(c')]\Big] \,&\leq\, 20(1+\beta)\ell d^2 k \exp\left(-\frac{d^{1 - 2\tau}}{16}\right) \nonumber\\
    &\leq\,  d^{-1}.\label{p3}
\end{align}
Putting together inequalities \eqref{p1}, \eqref{p2}, and \eqref{p3}, we see that
\[\P\left[S_1,\ldots,S_k\text{ satisfy \ref{item:P1}, \ref{item:P2}, and \ref{item:P3}}\right] \,\geq\, 1 - 3d^{-1} \,>\, 0.\]
Therefore, such a partition exists.
\end{claimproof}

From here on out, we fix a partition $S = S_1 \sqcup \ldots \sqcup S_k$ satisfying the properties in Claim \ref{partition}. Let $Z_i \defeq S_i \cap (\uncoloredges \setminus \keptedges)$. We now show that each random variable $|Z_i|$ is highly concentrated around its expected value. \ep{This is the \hyperlink{partitioning}{random partitioning technique} advertised in \S\ref{subsec:sketch}.} 

\begin{claim}\label{UminsKPartitionConcentration}
$\P\left[\big||Z_i| - \E[|Z_i|]\big| > \eta\beta\,\keep^2\,\uncolor\, |S_i|\right] \leq d^{-110}$ for all $1 \leq i \leq k$.
\end{claim}
\begin{claimproof}
    We claim that $|Z_i|$ satisfies conditions \ref{item:ET1} and \ref{item:ET2} of \hyperref[ExceptionalTalagrand]{Exceptional Talagrand's inequality} \ep{Theorem \ref{ExceptionalTalagrand}} with $s = (2 + C\log d)|S_i|$, $\gamma = 20(1 + (C \log d)^2)d^{1-\tau}$, and the exceptional set $\Omega^\ast$ defined in the beginning of \hyperref[sectionProofofConcentration]{this section}.

    To verify \ref{item:ET1}, take any $\omega \in \Omega \setminus \Omega^*$ and consider the output of the \hyperlink{procedure}{Coloring Procedure} under $\omega$. Given an edge $cc' \in Z_i$, we assemble a set $I_{cc'} \subseteq \mathcal{T}$ of trials that certifies that $cc' \in  \uncoloredges \setminus \keptedges$ as follows. For concreteness, let $c \in L(v)$ and $c' \in L(u)$ for some $u \in N_G(v)$. The statement that $cc' \in \uncoloredges \setminus \keptedges$ means that:
    \begin{itemize}
        \item $u$ is uncolored, and
        
        \item at least one color in $\set{c, c'}$ is not kept.
    \end{itemize}
    Let $c^\ast \in \set{c, c'}$ be any color that is not kept and define
    \[
        I_{cc'} \,\defeq\, \mathsf{cert}(u) \cup \set{\mathsf{cert}(c^\ast)},
    \]
    where $\mathsf{cert}(u)$ is the \hyperlink{uncolorcertificate}{uncoloring certificate} of $u$ and $\mathsf{cert}(c^\ast)$ is the \hyperlink{certificate}{removal certificate} of $c^\ast$. Notice that $|I_{cc'}| \leq 2 + |A(u)| \leq 2 + C\log d$, where the last inequality holds since $\omega \not \in \Omega^\ast$. Now we let
    \[
        I \,\defeq\, \bigcup_{cc' \in Z_i} I_{cc'}.
    \]
    Then $|I| \leq (2 + C\log d)|Z_i| \leq (2 + C\log d)|S_i| = s$.
    
    Now take any $q > 0$ and suppose that a set of outcomes $\omega' \not\in \Omega^*$ satisfies \[|Z_i(\omega')| \,\leq\, |Z_i(\omega)| - q.\] For each edge $cc' \in Z_i(\omega) \setminus Z_i(\omega')$, the outcomes of at least one trial in $I_{cc'}$ must be different in $\omega$ and in $\omega'$. At this point, for each trial $T \in I$, we need to bound the number of edges $cc' \in Z_i$ such that $T \in I_{cc'}$. We consider two cases depending on the type of the trial $T$.
    
    \smallskip
    
    \textbf{Case 1:} $T = \eq(c'')$ for some color $c''$. Suppose that $cc' \in Z_i$ is an edge such that $\eq(c'') \in I_{cc'}$. For concreteness, say that $c \in L(v)$ and $c' \in L(u)$ for some $u \in N_G(v)$. Then
    \begin{enumerate}[label=\ep{\itshape\alph*}]
        \item\label{item:3c} either $c'' \in \set{c, c'}$,
        \item\label{item:c''inAu} or $c'' \in A(u)$.
    \end{enumerate}
    By property \ref{item:P3} of the partition $S = S_1 \sqcup \ldots \sqcup S_k$, situation \ref{item:3c} occurs for at most \[|\Gamma(c'') \cap Z_i| \,\leq\, |\Gamma(c'') \cap S_i| \,\leq\, 6 d^{1 - \tau}\] edges $cc' \in Z_i$. On the other hand, in case \ref{item:c''inAu}, $u = L^{-1}(c'')$ and, by \ref{item:P2}, this happens for at most
    \[
        |M_u \cap Z_i| \,\leq\, |M_u \cap S_i| \,\leq\, \frac{3}{2}\,\ell(u)\,d^{-\tau} \,\leq\, 10d^{1-\tau}
    \]
    edges $cc' \in Z_i$. Therefore, the trial $\eq(c'')$ belongs to at most $16 d^{1 - \tau} \leq \gamma$ sets $I_{cc'}$.
    
    \smallskip
    
    \textbf{Case 2:} $T = \mathsf{act}(w)$ for some vertex $w$. Suppose that $cc' \in Z_i$ is an edge such that $\mathsf{act}(w) \in I_{cc'}$. For concreteness, say that $c \in L(v)$ and $c' \in L(u)$ for some $u \in N_G(v)$. Then
    \begin{enumerate}[label=\ep{\itshape\alph*}]
        \item\label{item:a1} either $\mathsf{act}(w) = \mathsf{cert}(c^\ast)$ for some $c^\ast \in \set{c, c'}$,
        \item\label{item:b1} or $\mathsf{act}(w) \in \mathsf{cert}(u)$.
    \end{enumerate}
    By property \ref{item:P3} of the partition $S = S_1 \sqcup \ldots \sqcup S_k$, situation \ref{item:a1} occurs for at most
    \[\sum_{c'' \in A(w)} |\Gamma(c'') \cap Z_i| \,\leq\, \sum_{c'' \in A(w)} |\Gamma(c'') \cap S_i| \,\leq\, |A(w)| \cdot 6 d^{1 - \tau} \,\leq\, 6C d^{1-\tau} \log d\] edges $cc' \in Z_i$, where in the last inequality we use that $\omega \not \in \Omega^\ast$. On the other hand, in case \ref{item:b1}, $w \in N_G[u]$ and either $w = u$ or
    \begin{equation}\label{eq:uw1}
        N_H(A(w)) \cap A(u) \,\neq\, \0.
    \end{equation}
    Since $\omega \not \in \Omega^\ast$, each vertex $w \in N_G^2[v]$ has at most $(C \log d)^2$ neighbors $u$ such that \eqref{eq:uw1} holds. Furthermore, by \ref{item:P2}, for each such $u$, there are at most
    \[
        |M_u \cap Z_i| \,\leq\, |M_u \cap S_i| \,\leq\, \frac{3}{2}\,\ell(u)\,d^{-\tau} \,\leq\, 10d^{1-\tau}
    \]
    edges $cc' \in Z_i$ with $c' \in L(u)$. Hence, the trial $\mathsf{act}(w)$ belongs to at most
    \[
        6C d^{1-\tau} \log d + 10(1 + (C \log d)^2) d^{1-\tau} \,\leq\, \gamma
    \]
    sets $I_{cc'}$.
    
    \smallskip
    
    To summarize, each trial belongs to at most $\gamma$ sets $I_{cc'}$ corresponding to the edges $cc' \in Z_i$.
Therefore, $\omega'$ and $\omega$ must differ on at least $q/\gamma$ trials, as desired.

It remains to show that $\P\left[\Omega^*\right] \leq M^{-2}$, where $M = \max\{\sup |Z_i|, 1\}$. Note that 
\[M = \max\{\sup |Z_i|, 1 \} \leq \max\{ 400 d^{2-\tau}, 1\} \leq d^2,\]
so it follows from Lemma \ref{omegaStar} that $\P\left[\Omega^*\right] \leq 1/M^2$, for $d$ large enough.

We can now use \hyperref[ExceptionalTalagrand]{Exceptional Talagrand's inequality}. Let $\xi \defeq \eta\beta\,\keep^2\uncolor |S_i|$. Note that $\xi > 50\gamma\sqrt{s}$ \ep{for $d$ large enough}. We can therefore write
\begin{align*}
&\P\left[\big||Z_i|-\E[|Z_i|]\big| > \eta\beta\,\keep^2\uncolor |S_i|\right] \\
\leq\, &4\exp{\left(-\frac{\eta^2\beta^2\keep^4\uncolor ^2|S_i|^2}{16\gamma^2s}\right)} + 4\P[\Omega^*]\\
\leq \,&4\exp\left( -\Omega\left(\frac{\eta^2\beta^2|S_i|^2}{d^{2-2\tau}|S_i|\log^5 d}\right)\right) + 8d^{-125} \\
\leq \,&4\exp\left( -\Omega\left(\frac{\eta^2\beta^2d^{\tau-\mu}}{\log^5 d}\right)\right) + 8d^{-125} \\
\leq \,&d^{-110},
\end{align*}
for $d$ large enough.\end{claimproof}

Putting all of this together, we have:
\begin{align*}
    &\P\left[|\uncoloredges\setminus\keptedges| \leq \E[|\uncoloredges\setminus\keptedges|] - \eta\beta\,\keep^2\uncolor\ell(v)d_{\max}\right] \\
    \leq\, &\P\left[|\uncoloredges\setminus\keptedges| \leq \E[|\uncoloredges\setminus\keptedges|] - \eta\beta\,\keep^2\uncolor|S|\right] \\
    \leq\,&\P\left[\exists~ i \text{ such that } |Z_i| \leq \E[|Z_i|] - \eta\beta\,\keep^2\uncolor|S_i|\right] \\
    \leq\, &d^\tau\, d^{-110} \\
    \leq\, &d^{-105}. \qedhere
\end{align*}
\end{proof}
We are now ready to bound $|\uncoloredges\cap\keptedges|$. If $|S| \leq d^{2-\mu}$, then we are done by Lemma~\ref{lemma:smallS}. Otherwise, from Lemmas \ref{uncolorGconcentration} and \ref{UminusKconcentration}, we have 
\begin{align*}
    &\P\left[|\keptedges\cap \uncoloredges| > \E[|\keptedges\cap \uncoloredges|] + 2\eta\beta\,\keep^2\uncolor\ell(v)d_{\max}\right] \\
    \leq\, &\P\left[|\uncoloredges| > \E[|\uncoloredges|] +\eta\beta\,\keep^2\uncolor\ell(v)d_{\max}\right] +  \\
    &\qquad\P\left[|\uncoloredges\setminus \keptedges| \leq \E[|\uncoloredges\setminus \keptedges|] - \eta\beta\,\keep^2\uncolor\ell(v)d_{\max}\right] \\
    \leq\, &d^{-100}.
\end{align*}
Finally, from Lemma \ref{expectationKeptUncolor} we have
\begin{align*}
    &\P\left[|\keptedges\cap \uncoloredges| > \keep^2\,\uncolor\,\ell(v)\,\overline{\deg}(v) + 8\eta\beta\,\keep^2\,\uncolor\,\ell(v)d_{\max}\right] \\
    \leq\, &\P\left[|\keptedges\cap \uncoloredges| > \keep^2\,\uncolor\,\ell(v)\,\overline{\deg}(v)(1+4\eta\beta) + 2\eta\beta\,\keep^2\,\uncolor\,\ell(v)\,d_{\max}\right] \\
    \leq\, &\P\left[|\keptedges\cap \uncoloredges| > \E[|\keptedges\cap \uncoloredges|] + 2\eta\beta\,\keep^2\,\uncolor\,\ell(v)\,d_{\max}\right] \\
    \leq\, &d^{-100}.
\end{align*}

\section{Proof of Theorem \ref{mainTheorem}}\label{sectionIterations}

    In this section we prove Theorem \ref{mainTheorem} by iteratively applying Lemma \ref{iterationTheorem} until we reach a stage where we can apply Proposition \ref{finalBlow}. To do so, we first define the parameters for the graph and the cover at each iteration and then take $d_0$ so large that the graphs at each iteration will satisfy the conditions of Lemma \ref{iterationTheorem}.

    We use the notation of Theorem~\ref{mainTheorem}. Let
    \begin{align*}
        G_1 \defeq G, \qquad \mathcal{H}_1 = (L_1, H_1) \defeq \mathcal{H},\qquad \ell_1 \defeq (4+\epsilon)d/\log d,\qquad d_1 \defeq d.
    \end{align*}
We may assume that $\epsilon$ is sufficiently small, say $\epsilon < 1/100$. Since $d$ is large, we may also assume that $\ell_1$ is an integer by slightly modifying $\epsilon$ if necessary. By removing some of the vertices from $H$ if necessary, we may assume that $|L(v)| = \ell_1$ for all $v \in V(G)$.
Define \[\kappa \defeq (2+\epsilon/4)\log(1+\epsilon/50)\approx \epsilon/25,\] and fix $\eta \defeq \kappa/\log d$, so that $\eta$ is the same each time we apply Lemma~\ref{iterationTheorem}.
We recursively define the following parameters for each $i \geq 1$:
\begin{align*}
    \keep_i &\defeq \left(1 - \frac{\eta}{\ell_i}\right)^{2d_i},
    & \uncolor_i &\defeq \left(1 - \frac{\eta}{\ell_i}\right)^{\keep_i\,\ell_i/2},
    \\
    \ell_{i+1} &\defeq \keep_i\, \ell_i, & d_{i+1}&\defeq \keep_i\, \uncolor_i\, d_i, \\
    \beta_1 &\defeq d^{-1/(200t)}, & \beta_{i+1} &\defeq \max\left\{(1+36\eta)\beta_i,\, d_{i+1}^{-1/(200t)}\right\}.
\end{align*}
Suppose that at the start of iteration $i$, the following numerical conditions hold:
\begin{enumerate}[label=\ep{\normalfont\arabic*}]
    \item\label{item:1} $d_i$ is sufficiently large: $d_i \geq \tilde{d}$, where $\tilde{d}$ is the constant from Lemma~\ref{iterationTheorem},
    \item\label{item:2} $\ell_i$ is bounded below and above in terms of $d_i$: $4\eta\,d_i < \ell_i < 100d_i$,
    \item\label{item:3} $s$ and $t$ are bounded in terms of $d_i$: $s \leq d_i^{1/10}$ and \label{item:4} $t \leq \dfrac{\tilde{\alpha}\log d_i}{\log\log d_i}$,
    \item\label{item:5} $\eta$ is close to $1/\log d_i$: $\dfrac{1}{\log^5d_i} < \eta < \dfrac{1}{\log d_i}$,
    \item\label{item:eps} $\beta_i$ is small: $\beta_i \leq 1/10$. \ep{Note that the bound $\beta_i \geq d_i^{-1/(200t)}$ holds by definition.}
\end{enumerate}
Furthermore, suppose that we have a graph $G_i$ and a DP-cover $\mathcal{H}_i = (L_i, H_i)$ of $G_i$ such that:
\begin{enumerate}[resume,label=\ep{\normalfont\arabic*}]
    \item\label{item:6} $H_i$ is $K_{1,s,t}$-free,
    \item\label{item:7} $\Delta(H_i) \leq 2d_i$,
    \item\label{item:8} the list sizes are roughly between $\ell_i/2$ and $\ell_i$: \[(1-\beta_i)\ell_i/2 \,\leq\, |L_i(v)| \,\leq\, (1+\beta_i)\ell_i \quad \text{for all } v \in V(G_i),\]
    \item\label{item:9} average color-degrees are smaller for vertices with smaller lists of colors: \[\overline{\deg}_{\mathcal{H}_i}(v) \,\leq\, \left(2 - (1 - \beta_i)\frac{\ell_i}{|L_i(v)|}\right)d_i \quad \text{for all } v\in V(G_i).\]
\end{enumerate}
Then we may apply Lemma~\ref{iterationTheorem} to obtain a partial $\mathcal{H}_i$-coloring $\phi_i$ of $G_i$ and an assignment of subsets $L_{i+1}(v) \subseteq (L_i)_{\phi_i}(v)$ to each vertex $v \in V(G_i) \setminus \dom(\phi_i)$ such that, setting
\[
    G_{i+1} \defeq G_i[V(G_i) \setminus \dom(\phi_i)], \qquad H_{i+1} \defeq H_i \left[\textstyle\bigcup_{v \in V(G_{i+1})} L_{i+1}(v)\right],
\]
\[
    \text{and} \qquad \mathcal{H}_{i+1}' \defeq (L_{i+1}', H_{i+1}'),
\]
we get that conditions \ref{item:6}--\ref{item:9} hold with $i+1$ in place of $i$.
Note that, assuming $d_0$ is large enough and $\alpha$ is small enough, conditions \ref{item:1}--\ref{item:9} are satisfied initially \ep{i.e., for $i = 1$}. Our goal is to show that there is some value $i^\star \in \N$ such that:
\begin{itemize}
    \item for all $1 \leq i < i^\star$, conditions \ref{item:1}--\ref{item:eps} hold, and
    \item we have $\ell_{i^\star} \geq 100d_{i^\star}$.
\end{itemize}
Since conditions \ref{item:6}--\ref{item:9} hold by construction, we will then be able to iteratively apply Lemma~\ref{iterationTheorem} $i^\star - 1$ times and then complete the coloring using Proposition~\ref{finalBlow}.

The following lemma finds the required $i^\star$ as well as guarantees that the list sizes remain sufficiently large to apply Proposition \ref{finalBlow}.

\begin{Lemma}\label{lemma:i_star}
The parameters $\ell_i$ and $d_i$ satisfy the following for some absolute constant $a > 0$:
\begin{enumerate}[label=\ep{\normalfont{}I\arabic*}]
    \item\label{item:10} For every $i$, $d_{i+1}/\ell_{i+1} \leq d_i/\ell_i \leq d_1/\ell_1 = \frac{\log d}{(4+\epsilon)}$.
    \item\label{item:11} For all $i$, $\ell_i \geq d^{a\,\epsilon}$.
    \item\label{item:12} There is $i^\star = O(\log d\log\log d)$ such that $d_{i^\star} \leq \ell_{i^\star}/100$.
\end{enumerate}
\end{Lemma}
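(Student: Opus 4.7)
The natural variable to track is the ratio $r_i \defeq d_i/\ell_i$. The recurrences for $\ell_i$ and $d_i$ combine into
\[
    r_{i+1} \,=\, \frac{\keep_i\,\uncolor_i\,d_i}{\keep_i\,\ell_i} \,=\, \uncolor_i\, r_i,
\]
and since $\uncolor_i \in (0,1)$ unconditionally, $r_i$ is strictly decreasing with $r_1 = d/\ell_1 = \log d/(4+\eps)$. This immediately gives \ref{item:10}.

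For \ref{item:12}, the plan is to show $r_i$ shrinks geometrically by a factor $1 - \Omega(\eta)$ per step. The assumption $\ell_i > 4\eta d_i$ rewrites as $2\eta r_i < 1/2$, so a standard Taylor estimate yields $\keep_i \geq 1/2$ for all $i$ under consideration. Hence $\uncolor_i = (1 - \eta/\ell_i)^{\keep_i\,\ell_i/2} \leq e^{-\keep_i\,\eta/2} \leq e^{-\eta/4}$, giving $r_i \leq r_1 e^{-(i-1)\eta/4}$. Requiring $r_{i^\star} \leq 1/100$ forces $i^\star - 1 \geq (4/\eta)\log(100 r_1)$. Since $\eta = \kappa/\log d$ with $\kappa = \Theta(\eps)$ a positive constant and $\log(100 r_1) = O(\log\log d)$, this gives $i^\star = O(\log d \log\log d)$.

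The heart of the argument is \ref{item:11}. The key identity is that both $\log\keep_j = 2d_j\log(1 - \eta/\ell_j)$ and $\log\uncolor_j = (\keep_j\,\ell_j/2)\log(1 - \eta/\ell_j)$ are positive multiples of $\log(1 - \eta/\ell_j)$, giving
\[
    \log\keep_j \,=\, \frac{4 r_j}{\keep_j}\,\log\uncolor_j.
\]
The recurrence $r_{j+1} = \uncolor_j r_j$ rewrites as $\log\uncolor_j = \log(r_{j+1}/r_j)$, and since $|\log\uncolor_j| = O(\eta) = o(1)$, a Taylor expansion yields $r_j \log\uncolor_j = (r_{j+1} - r_j)(1 + o(1))$. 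This telescopes:
\[
    \sum_{j=1}^{i-1} r_j \log\uncolor_j \,=\, (r_i - r_1)(1+o(1)).
\]
Because $r_j$ is decreasing, $\keep_j$ is increasing in $j$, so $\keep_j \geq \keep_1$ throughout. Combining,
\[
    \log(\ell_i/\ell_1) \,=\, \sum_{j<i}\log\keep_j \,\geq\, -\frac{4}{\keep_1}(r_1 - r_i)(1+o(1)) \,\geq\, -\frac{4 r_1}{\keep_1}(1+o(1)).
\]
The specific choice $\kappa = (2 + \eps/4)\log(1 + \eps/50)$ is engineered so that $2\kappa/(4+\eps) = O(\eps)$, yielding $\keep_1 = e^{-2\kappa/(4+\eps) - o(1)} \geq 1 - O(\eps)$. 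Substituting $r_1 = \log d/(4+\eps)$, a routine calculation shows
\[
    \frac{4 r_1}{\keep_1} \,\leq\, (1 - a\eps)\log d
\]
for some absolute $a > 0$ (e.g., $a = 1/12$ works for $\eps \leq 1/10$). Hence $\ell_i \geq \ell_1\,d^{-(1-a\eps) - o(1)}$, and absorbing the factor $\ell_1 = \Theta(d/\log d)$ into the exponent gives $\ell_i \geq d^{a\eps/2}$ for $d$ large. Since $\ell_i$ is decreasing, it suffices to check this at $i = i^\star$.

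The main obstacle is the delicate constant-tracking in the final chain: one must verify that the factor $4/((4+\eps)\keep_1)$ is strictly less than $1$ by an $\Omega(\eps)$ margin, and this is precisely what the particular form of $\kappa$ (with coefficients $2 + \eps/4$ and $\eps/50$) is tuned to guarantee. Everything else is just iteration and Taylor expansion with errors of size $O(\eta) = o(1)$.
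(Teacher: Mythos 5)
Your proposal is correct, and parts \ref{item:10} and \ref{item:12} follow essentially the paper's route: track $r_i \defeq d_i/\ell_i$, note $r_{i+1} = \uncolor_i r_i$, and use a per-step bound $\uncolor_i \leq e^{-\Omega(\eta)}$ to get $i^\star = O(\log d \log\log d)$. For the central part \ref{item:11} you take a genuinely different path. The paper first derives the decay $r_j \leq (1-x)^{j-1} r_1$ with $x = \Theta\bigl(\kappa\,\keep/\log d\bigr)$, feeds it back into an improved bound on $\keep_j$, and then sums the resulting geometric series, so that the quality of the final exponent depends on knowing the per-step decay rate of $r_j$. You instead use the exact identity $\log\keep_j = \frac{4r_j}{\keep_j}\log\uncolor_j$ together with $r_j\log\uncolor_j = (r_{j+1}-r_j)(1+O(\eta))$, so that $\sum_{j<i}\log\keep_j$ telescopes to $-\frac{4r_1}{\keep_{\min}}(1+o(1))$ with no information needed about how fast $r_j$ decays; this is cleaner and even yields a marginally better constant (exponent $\frac{4(1+\eps/50)}{4+\eps}(1+o(1))$ versus the paper's $\frac{4(1+\eps/50)}{4+\eps/10}$). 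Both arguments then hinge on the same tuning of $\kappa$, namely $e^{2\kappa/(4+\eps/2)} = 1+\eps/50$, to make the exponent at most $(1-\Omega(\eps))\log d$, and both absorb $\ell_1 = \Theta(d/\log d)$ to conclude $\ell_i \geq d^{a\eps}$. One spot to tighten: the assertion that ``$\keep_j$ is increasing in $j$, so $\keep_j \geq \keep_1$'' is not justified and is only true up to second-order corrections (both $d_j$ and $\ell_j$ shrink); the painless fix is to use, as the paper does, the uniform bound $\keep_j \geq \exp\bigl(-\tfrac{2\eta r_j}{1-\eta/\ell_j}\bigr) \geq \exp\bigl(-\tfrac{2\kappa}{4+\eps/2}\bigr) = (1+\eps/50)^{-1}$ for all $j$, which follows from $r_j \leq r_1$ and costs only an $o(1)$ factor, leaving your $\Omega(\eps)$ margin intact. (Also, the closing remark that it suffices to check \ref{item:11} at $i = i^\star$ is unnecessary: your telescoping bound is already uniform in $i$.)
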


\begin{proof}

Let $r_i = d_i/\ell_i$. Since $r_i$ decreases by a factor of $\uncolor_i$ at each stage, \ref{item:10} follows. We now show \ref{item:11}. First, we find a lower bound on $\keep_i$ as follows:
\begin{align*}
    \keep_i = \left(1 - \frac{\kappa}{\ell_i\,\log d}\right)^{2d_i} &\geq \exp\left(-\frac{2\kappa}{(1 - \epsilon/10)\log d}\,r_i\right) \\
    &\geq \exp\left(-\frac{2\kappa}{(1 - \epsilon/10)\log d}\,r_1\right) \\
    &= \exp\left(-\frac{2\kappa}{(1 - \epsilon/10)(4+\epsilon)}\right) \\ 
    &\geq \exp\left(-\frac{2\kappa}{4+\epsilon/2}\right).
\end{align*}
The last step follows for $\epsilon$ small enough. From this we derive and 
upper bound on $\uncolor_i$ as follows:
\begin{align*}
    \uncolor_i \,=\, \left(1 - \frac{\kappa}{\ell_i\log d}\right)^{\keep_i\,\ell_i/2} \,\leq\, \exp\left(-\frac{\keep_i\kappa}{2\log d}\right)
    \,\leq\, 1 - \frac{(1-\epsilon/10)\kappa}{2\log d}\,\exp\left(-\frac{2\kappa}{4+\epsilon/2}\right).
\end{align*}
From here, we have that:
\begin{align*}
    r_{i+1} \,=\, \uncolor_i\,r_i \,&\leq\, \left(1 - \frac{(1-\epsilon/10)\kappa}{2\log d}\,\exp\left(-\frac{2\kappa}{4+\epsilon/2}\right)\right)r_i \\
    &\leq\, \left(1 - \frac{(1-\epsilon/10)\kappa}{2\log d}\,\exp\left(-\frac{2\kappa}{4+\epsilon/2}\right)\right)^ir_1.
\end{align*}
We can use this bound on $r_i$ to achieve a better bound on $\keep_i$ as follows:
\begin{align*}
    \keep_i \,=\, \left(1 - \frac{\kappa}{\ell_i\,\log d}\right)^{2d_i} \,&\geq\, \exp\left(-\frac{2\kappa}{(1-\epsilon/10)\log d}\,r_i\right) \\
    &\geq\, \exp\left(-\frac{2\kappa}{(1-\epsilon/10)\log d}\,\left(1 - \frac{(1-\epsilon/10)\kappa}{2\log d}\,\exp\left(-\frac{2\kappa}{4+\epsilon/2}\right)\right)^{i-1}r_1\right) \\
    &=\, \exp\left(-\frac{2\kappa}{(1-\epsilon/10)(4+\epsilon)}\,\left(1 - \frac{(1-\epsilon/10)\kappa}{2\log d}\,\exp\left(-\frac{2\kappa}{4+\epsilon/2}\right)\right)^{i-1}\right) \\
    &\geq\, \exp\left(-\frac{2\kappa}{4+\epsilon/2}\,\left(1 - \frac{(1-\epsilon/10)\kappa}{2\log d}\,\exp\left(-\frac{2\kappa}{4+\epsilon/2}\right)\right)^{i-1}\right).
\end{align*}
With this and the definition of $\ell_i$, we get the desired lower bound (for $d$ large enough) as follows:
\begin{align*}
    \ell_i \,=\, \ell_1\prod\limits_{j = 1}^{i-1}\keep_j \,&\geq\, \ell_1\prod\limits_{j = 1}^{i-1}\exp\left(-\frac{2\kappa}{4+\epsilon/2}\,\left(1 - \frac{(1-\epsilon/10)\kappa}{2\log d}\,\exp\left(-\frac{2\kappa}{4+\epsilon/2}\right)\right)^{j-1}\right) \\
    &=\, \ell_1\,\exp\left(-\frac{2\kappa}{4+\epsilon/2}\,\sum\limits_{j = 1}^{i-1}\left(1 - \frac{(1-\epsilon/10)\kappa}{2\log d}\,\exp\left(-\frac{2\kappa}{4+\epsilon/2}\right)\right)^{j-1}\right) \\
    &\geq \ell_1\,\exp\left(-\frac{2\kappa}{4+\epsilon/2}\,\frac{2\log d}{(1-\epsilon/10)\kappa}\,\exp\left(\frac{2\kappa}{4+\epsilon/2}\right)\right) \\
    &\geq\, \ell_1\exp\left(-\frac{4}{4+\epsilon/10}\,\exp\left(\frac{2\kappa}{4+\epsilon/2}\right)\,\log d\right) \\
    &=\, \ell_1\exp\left(-\frac{4(1+\epsilon/50)}{4+\epsilon/10}\,\log d\right) \\
    &\geq\, d^{a\,\epsilon}.
\end{align*}
This proves \ref{item:11}. To show \ref{item:12}, note the following:
\begin{align*}
r_{i} \,\leq\, \left(1 - \frac{(1-\epsilon/10)\kappa}{2\log d}\,\exp\left(-\frac{2\kappa}{4+\epsilon/2}\right)\right)^{i-1}r_1 \,\leq\, \exp\left(-\frac{(i-1)(1-\epsilon/10)\kappa}{2(1+\epsilon/50)\log d}\right)\,\frac{\log d}{(4+\epsilon)}.
\end{align*}
For $i = \frac{10}{\kappa}\,\log d\log\log d$, the right hand side is smaller than $1/100$.
\end{proof}

    Let $i^\star$ be the minimum integer satisfying statement \ref{item:12} of Lemma~\ref{lemma:i_star}. Note that for all $i < i^\star$, \[d_i \,\geq\, \ell_i/100 \,\geq\, d^{a'\epsilon},\] where $a' > 0$ is some absolute constant. 
    Let $1 \leq i' \leq i^\star-1$ be maximum such that
    \[
        \beta_{i'} \,=\, d_{i'}^{-1/(200t)}.
    \]
    Then, for large enough $d$ and small enough $\alpha$, we can write
    \begin{align*}
        \beta_{i^\star-1} \,=\, (1+36\eta)^{i^\star - i' - 1} \beta_{i'} \,&\leq\, (1+36\eta)^{i^\star}d_{i'}^{-1/(200t)} \\
        &\leq\,\exp\left(36\eta\,i^\star\right)\,d^{-a'\epsilon/(200t)} \,\leq\, \frac{1}{\log d} \,\leq\, 1/10.
    \end{align*}
This verifies \ref{item:eps} for all $i < i^\star$.
By Lemma \ref{iterationTheorem} and the above upper bound on $\beta_{i^\star - 1}$, we have
\begin{align*}
    \min_{v \in V(G_{i^\star})} |L_{i^\star}(v)| \,\geq\, (1-(1+36\eta)/\log d)\,\ell_{i^\star}/2 \,\geq\,  50(1-2/\log d)\,d_{i^\star} \,\geq\, 10\max\limits_{c\in V(H_{i^\star})}\deg_{H_{i^\star}}(c).
\end{align*}
Thus $G_{i^\star}$ and $\mathcal{H}_{i^\star}$ satisfy the assumption of Proposition \ref{finalBlow} and we can complete the coloring.

It remains to verify statements \ref{item:1}--\ref{item:5} for $i < i^\star$. Notice that if we take $d_0 > \tilde{d}^{1/(a'\epsilon)}$, then $d_i \geq \tilde{d}$ for all $i < i^\star$. Hence, \ref{item:1} is satisfied. Moreover, since $a'\epsilon < 1$, we have
\[d_i^{1/10} \geq d^{a'\epsilon/10} \qquad \text{and} \qquad \tilde{\alpha}\frac{\log d_i}{\log \log d_i} \geq \tilde{\alpha}\frac{a'\epsilon\log d}{(\log \log d +\log (a\epsilon))} \geq \tilde{\alpha}\frac{a'\epsilon\log d}{\log \log d},\]
so, for $\alpha \leq \min\{a'\tilde{\alpha},a'/10\}$, condition \ref{item:3} is satisfied as well.

It remains to show that conditions \ref{item:2} and \ref{item:5} are satisfied at every iteration. The bound $\ell_i < 100\,d_i$ holds by the choice of $i^\star$. The lower bound on $\ell_i$ is proved as follows:
\[\frac{\ell_i}{d_i} \,\geq\, \frac{\ell_1}{d_1} \,=\, \frac{4+\epsilon}{\log d} \,\geq\, 4\eta.\]
Finally, it follows for $d$ large enough that:
\[\frac{1}{\log^5d_i} \,\leq\, \frac{1}{(a'\epsilon)^5\log^5 d} \,\leq\, \eta \,\leq\, \frac{1}{\log d} \,\leq\, \frac{1}{\log d_i}.\]
As discussed earlier, we can now iteratively apply Lemma~\ref{iterationTheorem} $i^\star - 1$ times and then complete the coloring using Proposition~\ref{finalBlow}. This finishes the proof of Theorem~\ref{mainTheorem}.

    \section{Palette sparsification}\label{section:sparse}
    
    In this section we derive Corollary~\ref{corl:sparse} from Corollary~\ref{mainCorollary}. As mentioned in \S\ref{subsec:alg}, this is a variant of the argument used by Alon and Assadi in \cite[\S3.2]{Palette} to prove Theorem~\ref{sparseAA}. To begin with, we need a probabilistic tool, namely a version of the Chernoff bound for negatively correlated random variables introduced by Panconesi and Srinivasan \cite{panconesi}. We say that $\set{0,1}$-valued random variables $X_1$, \ldots, $X_m$ are \emphd{negatively correlated} if for all $I \subseteq [m]$,
    \[
        \E\left[\prod_{i \in I} X_i\right] \,\leq\, \prod_{i \in I} \E[X_i].
    \]
    
    \begin{theo}[{\cite[Theorem 3.2]{panconesi}, \cite[Lemma 3]{Molloy}}]\label{lemma:NegativeChernoff}
    Let $X_1$, \ldots, $X_m$ be $\set{0,1}$-valued random variables. Set $X \defeq \sum_{i=1}^mX_i$. If $X_1$, \ldots, $X_m$ are negatively correlated, then, for all $0 < t \leq \E[X]$,
    \[
        \P[X > \E[X] + t] \,<\, \exp{\left(-\frac{t^2}{2\E{[X]}}\right)}.
    \]
    \end{theo}
    
    Now we fix $\epsilon > 0$ and  an almost bipartite graph $F$. Let
    \[
        C \,\defeq\, 300 \epsilon^{-3/2}.
    \]
    Take any $0 < \gamma < 1$ and let $d$, $n$, $s$ and $\ell$ satisfy the assumptions of Corollary~\ref{corl:sparse}. That is, we assume that $d$ is sufficiently large as a function of $\epsilon$, $F$, and $\gamma$, and we have
    \[
            \ell \,=\, \frac{4+\epsilon}{\gamma} \,\frac{d}{\log d} \qquad \text{and} \qquad s \,\geq\, d^\gamma + C\sqrt{\log n}.
    \]
    Since $d$ is large as a function of $\epsilon$ and $\gamma$, we may assume that $d \geq \ell$. Let $\mathcal{H} = (L,H)$ be a DP-cover of an $n$-vertex graph $G$ such that:
        \begin{enumerate}[label=\ep{\normalfont\roman*}]
            \item $H$ is $F$-free,
            \item $\Delta(H) \leq d$, and
            \item $|L(v)| = \ell$ for all $v \in V(G)$.
        \end{enumerate}
      Independently for each vertex $v \in V(G)$, pick a uniformly random subset $S(v) \subseteq L(v)$ of size $s$. Let $S \defeq \bigcup_{v \in V(G)}S(v)$ be the set of all picked colors. Set $\epsilon' \defeq \epsilon/20$ and define
      \[
        S'(v) \,\defeq\, \set{c \in S(v) \,:\, |N_H(c) \cap S| \leq (1+\epsilon') sd/\ell}.
      \]
      
      \begin{Lemma}\label{lemma:Sprime}
        With probability at least $1 - 1/n$, $|S'(v)| \geq (1-\epsilon')s$ for all $v \in V(G)$.
      \end{Lemma}
      \begin{proof}
        Take any vertex $v \in V(G)$. We will prove that
        \[
            \P\left[|S'(v)| < (1-\epsilon')s\right] \,\leq\, n^{-2},
        \]
        which gives the desired result by the union bound. Let us start by sampling $S(v)$. Now we assume that $S(v)$ is fixed. Let $t \defeq \lceil \epsilon' s \rceil$. Call a subset $T \subseteq S(v)$ of size $t$ \emphd{bad} if
        \[
            |N_H(T) \cap S| \,>\, (1+\epsilon')\frac{dts}{\ell}.
        \]
        Observe that if $|S'(v)| < (1-\epsilon')s$, then $|S(v) \setminus S'(v)| \geq t$ and every $t$-element subset of $S(v) \setminus S'(v)$ is bad. Therefore, it suffices to argue that
        \[
            \P[\text{there is a bad set $T \subseteq S(v)$ of size $t$}] \, \leq\, n^{-2}.
        \]
        To this end, consider an arbitrary set $T \subseteq S(v)$ of size $t$. For each color $c \in N_H(T)$, let $X_c$ be the indicator random variable of the event that $c \in S$. Define
        \[
            X \,\defeq\, \sum_{c \in N_H(T)} X_c \,=\, |N_H(T) \cap S|.
        \]
        Then $\E[X] \leq dts/\ell$, because $\E[X_c] = s/\ell$ for all $c \in N_H(T)$ and $|N_H(T)| \leq \Delta(H) |T| \leq dt$. Next we note that the random variables $(X_c \,:\, c \in N_H(T))$ are negatively correlated. Indeed, take any $I \subseteq N_H(T)$ and $c \in N_H(T) \setminus I$. Let $u \defeq L^{-1}(c)$ be the underlying vertex of $c$. Then 
        \[
            \P\left[X_c = 1 \,\middle\vert\, \prod_{c' \in I} X_{c'} = 1\right] \,=\, \frac{s- |I \cap L(u)|}{\ell - |I \cap L(u)|} \,\leq\, \frac{s}{\ell}.
        \]
        Inductive applications of this inequality show that for any $I \subseteq N_H(T)$,
        \[
            \E\left[\prod_{c \in I} X_c\right] \,\leq\, \left(\frac{s}{\ell}\right)^{|I|} \,=\, \prod_{c \in I} \E[X_c],
        \]
        as desired. Using Theorem~\ref{lemma:NegativeChernoff}, we conclude that
        \[
            \P\left[X > (1+\epsilon')\frac{d t s}{\ell}\right] \,<\, \exp\left(- \frac{(\epsilon')^2}{3} \, \frac{dts}{\ell}\right) \,\leq\, \exp\left(- \frac{(\epsilon')^3}{3} \, s^2\right).
        \]
        By the union bound, it follows that
        \begin{align*}
            \P\left[\text{there is a bad set $T \subseteq S(v)$ of size $t$}\right] \,\leq\, 2^s \, \exp\left(- \frac{(\epsilon')^3}{3} \, s^2\right) \,\leq\, \exp\left(- \frac{(\epsilon')^3}{4} \, s^2\right),
        \end{align*}
        where the second inequality holds if $d$ is large enough. Finally, since $s^2 \geq C^2\log n$, we have
        \[
            \exp\left(- \frac{(\epsilon')^3}{4} \, s^2\right) \,\leq\, \exp\left(- \frac{(\epsilon')^3}{4} \, C^2 \log n\right) \,\leq\, n^{-2},
        \]
        as desired.
      \end{proof}
      
      Let $S' \defeq \bigcup_{v \in V(G)}S'(v)$ and $H' \defeq H[S']$. Then $\mathcal{H}' \defeq (S', H')$ is a DP-cover of $G$ such that:
      \begin{enumerate}[label=\ep{\normalfont\roman*}]
            \item $H'$ is $F$-free,
            \item $\Delta(H') \leq (1+\epsilon')sd/\ell$, and
            \item with probability at least $1 - 1/n$, $|S'(v)| \geq (1-\epsilon')s$ for all $v \in V(G)$.
        \end{enumerate}
    Note that, assuming $d$ is large enough, $(1+\epsilon')sd/\ell \geq s \geq d^\gamma$, so
    \[
        (4 + \epsilon') \, \frac{(1+\epsilon')sd/\ell}{\log \left((1+\epsilon')sd/\ell\right)} \,\leq\, (4 + 6\epsilon') \,\frac{sd/\ell}{\gamma \log d} \,=\, \frac{4 + 6\epsilon'}{4 + \epsilon} \, s \,<\, (1-\epsilon')s.
    \]
    Therefore, with probability at least $1 - 1/n$, $G$ is $\mathcal{H}'$-colorable by Corollary~\ref{mainCorollary} \ep{applied with $\epsilon'$ in place of $\epsilon$}, and the proof of Corollary~\ref{corl:sparse} is complete.
    
    \subsubsection*{Acknowledgments}
    
    We are very grateful to Sepehr Assadi for helpful comments on an earlier version of this manuscript, in particular for drawing our attention to the consequences of our results for palette sparsification discussed in \S\S\ref{subsec:alg} and \ref{section:sparse}. We are also grateful to the anonymous referees for carefully reading the manuscript and providing helpful comments and suggestions.

    \frenchspacing

\printbibliography

@ARTICLE{JMTheorem,
	AUTHOR = "Bernshteyn, A.",
	TITLE = "{The Johansson--Molloy theorem for DP-coloring}",
	JOURNAL = "Random Struct. Algorithms",
	date = "2019",
	volume = {54},
	pages = {653--664},
}

@BOOK{MolloyReed,
    AUTHOR = "Molloy, M. and Reed, B.",
    TITLE = "{Graph Colouring and the Probabilistic Method}",
    PUBLISHER = "Springer",
    date = "2002",
}

@ARTICLE{DPCol,
	AUTHOR = "Dvo{\v{r}}{\'{a}}k, Z. and Postle, L.",
	TITLE = "{Correspondence coloring and its application to list-coloring planar graphs without cycles of lengths 4 to 8}",
	JOURNAL = "J. Comb. Theory",
	series = {B},
% 	archivePrefix = "arXiv",
% 	note = {arXiv:1508.03437},
	date = "2018",
	volume = {129},
	pages = {38--54},
	%MONTH = "mar",
}

@ARTICLE{KST,
	AUTHOR = "K\H{o}v\`{a}ri, T. and S\'{o}s, V.T. and Tur\'{a}n, P.",
	TITLE = "{On a problem of K. Zarankiewicz}",
	JOURNAL = "Colloq. Math.",
	date = "1954",
	pages = {50--57},
	volume = {3},
}

@ARTICLE{KST2,
	AUTHOR = "Hylt\'en-Cavallius, C.",
	TITLE = "{On a combinatorial problem}",
	JOURNAL = "Colloquium Mathematicum",
	date = "1958",
	pages = {61--65},
	volume = {6},
}

@article{ExceptionalTal,
  title={A stronger bound for the strong chromatic index},
  author={Bruhn, H. and Joos, F.},
  journal={Comb. Probab. Comput.},
  volume={27},
  number={1},
  pages={21--43},
  date={2018},
  publisher={Cambridge University Press}
}

@ARTICLE{AKSConjecture,
	AUTHOR = "Alon, N. and Krivelevich, M. and Sudakov, B.",
	TITLE = "{Coloring graphs with sparse neighborhoods}",
	JOURNAL = "J. Comb. Theory",
	series = {B},
	date = "1999",
	volume = {77},
	pages = {73--82},
}

@unpublished{Joh_sparse,
    author = {A. Johansson},
    title = {The choice number of sparse graphs},
	howpublished = {\url{https://www.cs.cmu.edu/~anupamg/down/johansson-choice-number-of-sparse-graphs-coloring-kr-free.pdf} (preprint)},
	date = {1996},
}

@article{Kim95,
	author = {J.H. Kim},
	title = {On Brooks' Theorem for sparse graphs},
	journaltitle = {Comb. Probab. Comput.},
	date = {1995},
	volume = {4},
	pages = {97--132},
}

@report{Joh_triangle,
	author = {A. Johansson},
	title = {Asymptotic choice number for triangle free graphs},
	type = {Technical Report 91--95},
	institution = {DIMACS},
	date = {1996},
}

@article{PS15,
	author = {S. Pettie and H.-H. Su},
	title = {Distributed coloring algorithms for triangle-free graphs},
	journaltitle = {Inf. Comput.},
	date = {2015},
	volume = {243},
	pages = {263--280},
}

@article{Molloy,
	author = {M. Molloy},
	title = {The list chromatic number of graphs with small clique number},
	journaltitle = {J. Comb. Theory},
	series = {B},
	volume = {134},
	pages = {264--284},
	date = {2019},
}

@unpublished{DKPS,
    author = {E. Davies and R.J. Kang and F. Pirot and J.-S. Sereni},
    title = {Graph structure via local occupancy},
	howpublished = {\url{https://arxiv.org/abs/2003.14361} (preprint)},
	date = {2020},
}

@article{CK,
  title={Independent transversals in bipartite correspondence-covers},
  author={Cambie, S. and Kang, R.J.},
  journal={Can. Math. Bull.},
  volume={65},
  number={4},
  pages={882--894},
  date={2022},
}

@article{AminiReed,
	author = {O. Amini and B. Reed},
	title = {List colouring constants of triangle free graphs},
	journaltitle = {Electron. Notes Discrete Math.},
	volume = {30},
	pages = {135--140},
	date = {2008},
}

@book{AlonSpencer,
	author = {N. Alon and J.H. Spencer},
	title = {The Probabilistic Method},
	date = {2000},
	edition = {2},
	publisher = {John Wiley {\&} Sons},
}

@unpublished{Nibble,
    author = {D.Y. Kang and T. Kelly and D. K{\"{u}}hn and A. Methuku and D. Osthus},
    title = {Graph and hypergraph colouring via nibble methods: A survey},
	howpublished = {\url{https://arxiv.org/pdf/2106.13733} (preprint)},
	date = {2021},
	addendum = {Proc. 8th European Congress of Mathematics (to appear)},
}

@article{BipartiteNibble,
    title={Colouring graphs with forbidden bipartite subgraphs},
    author={Anderson, J. and Bernshteyn, A. and Dhawan, A.},
    journal={Comb. Probab. Comput.},
    volume={32},
    number={1},
    pages={45--67},
    date={2023},
}

@book{Die,
	author = {R. Diestel},
	title = {Graph Theory},
	date = {2017},
	edition = {5},
	publisher = {Springer-Verlag},
	location = {Berlin Heidelberg},
}

@thesis{Jamall,
	author = {M.S. Jamall},
	title = {Coloring Triangle-Free Graphs and Network Games},
	type = {Ph.D. Thesis},
	institution = {University of California, San Diego},
	date = {2011},
	location = {San Diego},
}

@article{BollobasIndependence,
	author = {B. Bollob{\'{a}}s},
	title = {The independence ratio of regular graphs},
	journaltitle = {Proc. Am. Math. Soc.},
	date = {1981},
	volume = {83},
	number = {2},
	pages = {433--436},
}

@article{Achlioptas,
	author = {D. Achlioptas and A. Coja-Oghlan},
	title = {Algorithmic barriers from phase transitions},
	journaltitle = {IEEE Symposium on Foundations of Computer Science (FOCS)},
	date = {2008},
	pages = {793--802},
	addendum = {Full version: \url{https://arxiv.org/abs/0803.2122}},
}

@article{Zdeborova,
	author = {L. Zdeborov{\'{a}} and F. Krz{\k{a}}ka{\l}a},
	title = {Phase transitions in the coloring of random graphs},
	journaltitle = {Phys. Rev. E},
	date = {2007},
	volume = {76},
	pages = {031131},
}

@article{RV,
	author = {M. Rahman and B. Vir{\'{a}}g},
	title = {Local algorithms for independent sets are half-optimal},
	journaltitle = {Ann. Probab.},
	volume = {45},
	number = {3},
	pages = {1543--1577},
	date = {2017},
}

@article{Kleitman,
	author = {D.J. Kleitman},
	title = {Families of non-disjoint subsets},
	journaltitle = {J. Comb. Theory},
	date = {1966},
	volume = {1},
	number = {1},
	pages = {153--155},
}

@article{Harris,
	author = {T.E. Harris},
	title = {A lower bound for the critical probability in a certain percolation process},
	journaltitle = {Math. Proc. Cambridge Phil. Soc.},
	date = {1960},
	volume = {56},
	number = {1},
	pages = {13--20},
}

@article{Reed,
	author = {B. Reed},
	title = {The list colouring constants},
	journaltitle = {J. Graph Theory},
	volume = {31},
	date = {1999},
	pages = {149--153},
}

@article{Prz,
  title={On triangle-free list assignments},
  author={Przyby{\l}o, J.},
  journal={Discrete Math.},
  volume={347},
  number={2},
  pages={113779},
  date={2024},
  publisher={Elsevier}
}

@article{panconesi,
    author = {Panconesi, A. and Srinivasan, A.},
    title = {Randomized distributed edge coloring via an extension of the Chernoff--Hoeffding bounds},
    journaltitle = {SIAM J. Comput.},
    volume = {26},
    number = {2},
    pages = {350--368},
    date = {1997},
}

@article{MT,
	author = {R. Moser and G. Tardos},
	title = {A constructive proof of the general Lov\'{a}sz Local Lemma},
	journaltitle = {J. ACM},
	date = {2010},
	volume = {57},
	number = {2},
}

@article{Sparse,
	author = {S. Assadi and Y. Chen and S. Khanna},
	title = {Sublinear algorithms for $(\Delta + 1)$ vertex coloring},
	journaltitle = {ACM-SIAM Symposium on Discrete Algorithms (SODA)},
	date = {2019},
	pages = {767--786},
	addendum = {Full version: \url{https://arxiv.org/abs/1807.08886}},
}

@unpublished{anderson2024coloring,
  title={Coloring locally sparse graphs},
  author={J. Anderson and A. Dhawan and A. Kuchukova},
  howpublished={\url{https://arxiv.org/abs/2402.19271} (preprint)},
  date={2024}
}

@article{AEKS,
	author = {M. Ajtai and P. Erd{\H{o}}s and J. Koml{\'{o}}s and E. Szemer{\'{e}}di},
	title = {On Tur{\'{a}}n's theorem for sparse graphs},
	journaltitle = {Combinatorica},
	date = {1981},
	volume = {1},
    number = {4},
	pages = {313--317},
}

@article{AjtaiKS2,
	author = {M. Ajtai and J. Koml{\'{o}}s and E. Szemer{\'{e}}di},
	title = {A dense infinite Sidon sequence},
	journaltitle = {Eur. J. Comb.},
	date = {1981},
	volume = {2},
    number = {1},
	pages = {1--11},
}

@article{AjtaiKS1,
	author = {M. Ajtai and J. Koml{\'{o}}s and E. Szemer{\'{e}}di},
	title = {A note on Ramsey numbers},
	journaltitle = {J. Comb. Theory},
    series = {A},
	date = {1980},
	volume = {29},
    number = {3},
	pages = {354--360},
}

@article{Shearer,
	author = {J.B. Shearer},
	title = {A note on the independence number of triangle-free graphs},
	journaltitle = {Discrete Math.},
	date = {1983},
	volume = {46},
    number = {1},
	pages = {83--87},
}

@article{Vu,
	author = {V.H. Vu},
	title = {A general upper bound on the list chromatic number of locally sparse graphs},
	journaltitle = {Comb. Probab. Comput.},
	date = {2002},
	volume = {11},
	pages = {103--111},
}

@article{KahnListEdge,
	author = {J. Kahn},
	title = {Asymptotically good list-colorings},
	journaltitle = {J. Comb. Theory},
    series = {A},
	date = {1996},
	volume = {73},
	pages = {1--59},
}

@unpublished{KahnKenney,
    author = {J. Kahn and C. Kenney},
    title = {Asymptotics for palette sparsification},
	howpublished = {\url{https://arxiv.org/abs/2306.00171} (preprint)},
	date = {2023},
}

@article{Sparsification1,
	author = {S.K. Bera and A. Chakrabarti and P. Ghosh},
	title = {Graph coloring via degeneracy in streaming and other space-conscious models},
	journaltitle = {International Colloquium on Automata, Languages, and Programming (ICALP)},
	date = {2020},
	pages = {\#11},
	addendum = {Full version: \url{https://arxiv.org/abs/1905.00566}},
}

@article{Sparsification2,
	author = {M.M. Halld{\'{o}}rsson and F. Kuhn and A. Nolin and T. Tonoyan},
	title = {Near-optimal distributed degree+1 coloring},
	journaltitle = {ACM SIGACT Symposium on Theory of Computing (STOC)},
	date = {2022},
	pages = {450--463},
	addendum = {Full version: \url{https://arxiv.org/abs/2112.00604}},
}

@article{Sparsification3,
	author = {S. Assadi and P. Kumar and P. Mittal},
	title = {Brooks' theorem in graph streams: A single-pass semi-streaming algorithm for $\Delta$-coloring},
	journaltitle = {ACM SIGACT Symposium on Theory of Computing (STOC)},
	date = {2022},
	pages = {234--247},
	addendum = {Full version: \url{https://arxiv.org/abs/2203.10984}},
}

@article{Palette,
	author = {N. Alon and S. Assadi},
	title = {Palette sparsification beyond $(\Delta + 1)$ vertex coloring},
	journaltitle = {Approximation, Randomization, and Combinatorial Optimization. Algorithms and Techniques (APPROX/RANDOM)},
	date = {2020},
	pages = {\#6},
	addendum = {Full version: \url{https://arxiv.org/abs/2006.10456}},
}

@article{KN1,
    author = {M. Krivelevich and A. Nachmias},
    title = {Coloring complete bipartite graphs from random lists},
    journal = {Random Struct. Algorithms},
    volume = {29},
    number = {4},
    date = {2006},
    pages = {436--449},
}

@article{KN2,
    author = {M. Krivelevich and A. Nachmias},
    title = {Colouring complete powers of cycles from random lists},
    journal = {Eur. J. Comb.},
    volume = {25},
    number = {7},
    date = {2004},
    pages = {961--968},
}

@article{C1,
    author = {C.J. Casselgren},
    title = {Vertex coloring complete multipartite graphs from random lists of size 2},
    journal = {Discrete Math.},
    volume = {11},
    number = {13},
    date = {2011},
    pages = {1150--1157},
}

@article{C2,
    author = {C.J. Casselgren},
    title = {Coloring graphs from random lists of size 2},
    journal = {Eur. J. Comb.},
    volume = {33},
    number = {2},
    date = {2012},
    pages = {168--181},
}

@article{C3,
    author = {C.J. Casselgren},
    title = {Coloring graphs from random lists of fixed size},
    journal = {Random Struct. Algorithms},
    volume = {44},
    number = {13},
    date = {2014},
    pages = {317--327},
}

@article{C4,
    author = {C.J. Casselgren},
    title = {Coloring graphs of various maximum degree from random lists},
    journal = {Random Struct. Algorithms},
    volume = {52},
    number = {1},
    date = {2018},
    pages = {54--73},
}

@article{CH,
    author = {C.J. Casselgren and R. H{\"{a}}ggkvist},
    title = {Coloring complete and complete bipartite graphs from random lists},
    journal = {Graphs Comb.},
    volume = {32},
    date = {2016},
    pages = {533--542},
}

@article{rand1,
  title={Optimal thresholds for Latin squares, Steiner triple systems, and edge colorings},
  author={Jain, V. and Pham, H.T.},
  journaltitle={ACM-SIAM Symposium on Discrete Algorithms (SODA)},
  pages={1425--1436},
  date={2024},
}

@unpublished{rand2,
    author = {P. Keevash},
    title = {The optimal edge-colouring threshold},
    howpublished = {\url{https://arxiv.org/abs/2212.04397} (preprint)},
    date = {2022},
}

@article{rand3,
    author = {D.Y. Kang and T. Kelly and D. K{\"{u}}hn and A. Methuku and D. Osthus},
    title = {Thresholds for latin squares and Steiner triple systems: Bounds within a logarithmic factor},
    journaltitle = {Trans. Am. Math. Soc.},
    volume = {376},
    date = {2023},
    pages = {6623--6662},
}

@article{rand4,
    author = {Z. Luria and M. Simkin},
    title = {On the threshold problem for latin boxes},
    journaltitle = {Random Struct. Algorithms},
    volume = {55},
    number = {4},
    date = {2019},
    pages = {926--949},
}

@article{ReedSud,
	author = {B. Reed and B. Sudakov},
	title = {Asymptotically the list colouring constants are $1$},
	journaltitle = {J. Comb. Theory},
	series = {B},
	volume = {86},
	pages = {27--37},
	date = {2002},
}

@article{LohSudakov,
    author = {P.-S. Loh and B. Sudakov},
    title = {Independent transversals in locally sparse graphs},
    journaltitle = {J. Comb. Theory},
    series = {B},
    volume = {97},
    date = {2007},
    pages = {904--918},
}

@article{GlockSudakov,
    author = {S. Glock and B. Sudakov},
    title = {An average degree condition for independent transversals},
    journaltitle = {J. Comb. Theory},
    series = {B},
    volume = {154},
    date = {2022},
    pages = {370--391},
}

@article{KangKelly,
    title = {Colourings, transversals and local sparsity},
    author = {R.J. Kang and T. Kelly},
    journaltitle = {Random Struct. Algorithms},
    volume = {61},
    number = {1},
    date = {2022},
    pages = {173--192},
}

@article{BohmanHolzman,
    author = {T. Bohman and R. Holzman},
    title = {On a list coloring conjecture of Reed},
    journaltitle = {J. Graph Theory},
    volume = {41},
    number = {2},
    pages = {106--109},
    date = {2002},
}

@article{FHK,
	title = {Local conflict coloring},
	author = {P. Fraigniaud and M. Heinrich and A. Kosowski},
	journaltitle = {IEEE Symposium on Foundations of Computer Science (FOCS)},
	date = {2016},
	pages = {625--634},
	addendum = {Full version: \url{https://arxiv.org/abs/1511.01287}},
}

@article{SingleConflict,
    author = {Z. Dvo{\v{r}}{\'{a}}k and L. Esperet and R.J. Kang and K. Ozeki},
    title = {Single-conflict colouring},
    journaltitle = {J. Graph Theory},
    volume = {97},
    date = {2021},
    number = {1},
    pages = {148--160},
}

@article{Adaptive,
    author = {M. Molloy and G. Thron},
    title = {An asymptotically tight bound on the adaptable chromatic number},
    journaltitle = {J. Graph Theory},
    volume = {71},
    number = {3},
    date = {2012},
    pages = {331--351},
}

@article{haxell2001note,
  title={A note on vertex list colouring},
  author={Haxell, P.E.},
  journal={Comb. Probab. Comput.},
  volume={10},
  number={4},
  pages={345--347},
  date={2001},
}

@article{asymptoticDP,
    author = {A. Bernshteyn},
    title = {The asymptotic behavior of the correspondence chromatic number},
    journal = {Discrete Math.},
    volume = {339},
    numberr = {11},
    year = {2016},
    pages = {2680--2692},
}

\end{document}